\theoremstyle{plain}
 \newtheorem{theorem}{Theorem}
\newtheorem{lemma}[theorem]{Lemma}
\newtheorem{corollary}[theorem]{Corollary}
\newtheorem{proposition}[theorem]{Proposition}
\newtheorem{conjecture}[theorem]{Conjecture}
\theoremstyle{definition}
\newtheorem{definition}[theorem]{Definition}
\newtheorem{example}[theorem]{Example}
\newtheorem{question}[theorem]{Question}
\newtheorem{remark}[theorem]{Remark} 
\DeclareMathOperator{\RefComp}{{\textsc{RefComp}}}
\DeclareMathOperator{\FactorEq}{{\textsc{FactorEq}}}
\DeclareMathOperator{\FactorRevEq}{{\textsc{FactorRevEq}}}
\def\andd{\ \wedge\ }
\def\suchthat{ \, : \, }
\newcommand{\infw}[1]{\mathbf{#1}}
\newcommand{\N}{\mathbb{N}}
\newcommand{\Nz}{\mathbb{N}_{0}}
\newcommand{\Z}{\mathbb{Z}}
\newcommand{\eps}{\varepsilon}
\DeclareMathOperator{\rep}{rep}
\DeclareMathOperator{\val}{val}
\DeclareMathOperator{\Pal}{Pal}
\DeclareMathOperator{\refl}{ClassRef}
\DeclareMathOperator{\Unr}{Unr}
\DeclareMathOperator{\Refl}{Ref}
\DeclareMathOperator{\Fac}{Fac}
\newcommand{\seqnum}[1]{\href{https://oeis.org/#1}{\rm \underline{#1}}}
\newenvironment{smallarray}[1]
{\null\,\vcenter\bgroup\scriptsize

\arraycolsep=.13885em
\hbox\bgroup$\array{@{}#1@{}}}
{\endarray$\egroup\egroup\,\null}
\begin{document}


\title{The reflection complexity of sequences over finite alphabets}

\author{ }
 
\maketitle

\begin{center}
 {\textsc{Jean-Paul Allouche}} \ \ \
 {\textsc{John M. Campbell}} \ \ \ 
 {\textsc{Shuo Li}} \ \ \\ 
 {\textsc{Jeffrey Shallit}} \ \ \ 
 {\textsc{Manon Stipulanti}} 

 \ 

\end{center}

\begin{abstract}
 In combinatorics on words, the well-studied factor complexity function $\rho_{\infw{x}}$ of a sequence $\infw{x}$ over a finite 
 alphabet counts, for every nonnegative integer $n$, the number of distinct length-$n$ factors of $\infw{x}$. In this paper, we 
 introduce the \emph{reflection complexity} function $r_{\infw{x}}$ to enumerate the factors occurring in a sequence $\infw{x}$, up 
 to reversing the order of symbols in a word. We prove a number of results about the growth properties of $r_{\infw{x}}$ 
 and its relationship with other complexity functions. We also prove a Morse--Hedlund-type result characterizing eventually periodic 
 sequences in terms of their reflection complexity, and we deduce a characterization of Sturmian sequences. We investigate 
 the reflection complexity of quasi-Sturmian, episturmian, $(s+1)$-dimensional billiard, complementation-symmetric Rote, and rich 
 sequences. Furthermore, we prove that if $\infw{x}$ is $k$-automatic, then $r_{\infw{x}}$ is computably $k$-regular, and we use the 
 software \texttt{Walnut} to evaluate the reflection complexity of some automatic sequences, such as the Thue--Morse sequence. We 
 note that there are still many unanswered questions about this reflection measure. 
\end{abstract}

\noindent {\footnotesize \emph{Keywords:} factor complexity, reflection complexity, reversal, automatic sequence, Sturmian sequence, quasi-Sturmian sequence, episturmian sequence, billiard sequence, Rote sequence, rich sequence, Morse-Hedlund theorem, \texttt{Walnut}.}

\noindent {\footnotesize \emph{MSC:} Primary 68R15; Secondary 11B85.}

\section{Introduction}\label{sec:intro}
 Given  an infinite sequence $\infw{x}$ over a finite alphabet, 
 it is natural to study the combinatorial properties of the
 factors of $\infw{x}$. (The term {\it factor}
 refers to a contiguous block occurring in $\infw{x}$.) For example, writing $\Nz = \{0,1,\ldots \}$ and $\N = \{1,2,\ldots \}$, many authors have studied the \emph{factor complexity function} $\rho_{\infw{x}} \colon \Nz \to \N$,
which maps $n\ge 0$ to the number of distinct factors of 
$\infw{x}$ of length $n$. Note that $\rho_{\infw{x}}(0)=1$, 
since every---finite or infinite---sequence has a unique 
factor of length $0$; namely, the empty word. 
 
 Variations on this definition can be considered as a measure of how ``complicated'' a sequence is. For example, the \emph{abelian complexity function} of $\infw{x}$ counts the number of factors of $\infw{x}$ of a given length, where two factors 
 $u$ and $v$ are considered the same if they have the same length and one is a permutation of the other. 
 Similarly, the \emph{cyclic complexity function} 
 $c_{\infw{x}}$, introduced in 2017 
\cite{CassaigneFiciSciortinoZamboni2017}, is equal to 
 the number of length-$n$ factors of $\infw{x}$, up to equivalence under rotations (cyclic permutations). By analogy, the abelian and cyclic complexity functions
 lead us to introduce, in this paper, a \emph{reflection complexity function} on sequences involving reversals.

 In addition to the factor, abelian, and cyclic complexity functions indicated above, there have been many different complexity functions on sequences that have been previously introduced. 
 In this regard, we highlight the following in alphabetical order: 
 additive complexity~\cite{ArdalBrownJungicSahasrabudhe2012},
 arithmetical complexity~\cite{AvgustinovichFonDerFlaassFrid2003},
 gapped binomial complexity~\cite{RigoStipulantiWhiteland2023}, 
 $k$-abelian complexity~\cite{KarhumakiSaarelaZamboni2013},
 $k$-binomial complexity~\cite{RigoSalimov2015},
 Kolmogorov complexity~\cite{Kolmogorov1963},
 Lempel--Ziv complexity~\cite{LempelZiv1976},
 Lie complexity~\cite{BellShallit2022},
 linear complexity~\cite{Niederreiter2003}, 
 maximal pattern complexity~\cite{KamaeZamboni2002},
 maximum order complexity~\cite{ErdmannMurphy1997}, 
 opacity complexity~\cite{Allouche-Yao},
 open and closed complexity~\cite{ParshinaPostic2020},
 palindrome complexity~\cite{AlloucheBaakeCassaigneDamanik2003}, 
 periodicity complexity~\cite{MignosiRestivo2013},
 privileged complexity~\cite{Peltomaki2013},
 relational factor complexity~\cite{CassaigneKarkiZamboni2008}, 
 (initial) (non-)repe\-ti\-tive complexity~\cite{BugeaudKim2019,Moothathu2012}, 
 span and leftmost complexity~\cite{CassaigneGheeraertRestivoRomanaSciortinoStipulanti2023},
 string attractor profile complexity (implicitly
 defined in~\cite{SchaefferShallit2021}; also see~\cite{CassaigneGheeraertRestivoRomanaSciortinoStipulanti2023}), 
and window 
complexity~\cite{CassaigneKaboreTapsoba2010}.
 Also see the references in the surveys in~\cite{Allouche1994,Ferenczi1999,FerencziKasa1999,FiciPuzynina2023}. 
Our reflection complexity function $r_{\infw{x}}$, defined below, does not seem to have been previously studied, but may be thought of as natural in terms of its relationships with automatic 
 sequences such as the Thue--Morse sequence. To begin with, we require the equivalence relation $\sim_r$ defined below. 
\begin{definition}\label{def:simr}
 Let $m,n$ be nonnegative integers.
 Given a finite word $u= u(1) u(2) \cdots u(m)$, its \emph{reversal} is the word $u^R=u(m) u(m-1) \cdots u(1)$, i.e., $u^R(i) = u(m+1-i)$ for all $i\in\{1,\ldots,m\}$.
 A \emph{palindrome\/} is a word that is equal to its reversal.
 Two finite words $u$ and $v$ are \emph{reflectively equivalent} if $v = u$ or $v = u^R$. 
 We denote this equivalence relation by $u \sim_r v$. 
\end{definition}

\begin{example}
 Over the alphabet $\{ \text{{\tt a}}, \text{{\tt b}}, \ldots, \text{{\tt z}} \}$, the English word {\tt reward} is reflectively equivalent to {\tt drawer}, while {\tt deed}, {\tt kayak}, and {\tt level} are palindromes.
\end{example}

\begin{definition}\label{def:reflection-compl}
 Let $\infw{x}$ be a sequence.
 The \emph{reflection complexity function} $r_{\infw{x}}\colon \Nz \to \N$ of $\infw{x}$ maps every $n\ge 0$ to the number of distinct length-$n$ factors of $\infw{x}$, up to equivalence by $\sim_r$. 
\end{definition}

\begin{example}
 Let 
\begin{equation}\label{eq:TMword}
 \infw{t} 
 = 011010011001011010010110011010011\cdots
\end{equation}
 denote the Thue--Morse sequence, 
 where
 the $n$th term in~\eqref{eq:TMword} 
 for $n\ge 1$ 
 is defined as the number of $1$'s, modulo $2$, 
 in the base-2 expansion of $n - 1$. 
 The initial terms of the integer sequence $(r_{\infw{t}}(n))_{n\ge 0}$ are such that 
\begin{equation}\label{eq:rt}
 (r_{\infw{t}}(n))_{n\ge 0} = 1, 2, 3, 4, 6, 6, 10, 10, 13, 12, 16, 16, 20, 20, 22, \ldots. 
\end{equation} 
 We see that $r_{\infw{t}}(2) = 3$, for example, 
 since there are $3$ length-2 factors of $\infw{t}$, up to reflection complexity, i.e., the factors $00$ 
 and $11$ and one member of the equivalence class $\{ 01, 10 \}$, with respect to $\sim_r$. 
\end{example}

 The integer sequence in~\eqref{eq:rt} was not, prior to this paper, included in the On-Line Encyclopedia of Integer Sequences~\cite{Sloane}, which 
 suggests that our notion of ``reflection complexity'' is new.
 (Now it is present as sequence
 \seqnum{A373700}.)
 Also see the work of Krawchuk 
 and Rampersad in~\cite{KrawchukRampersad2018}, 
 which introduced the notion of 
 \emph{cyclic/reversal complexity} for sequences. 
 The evaluation of reflection complexity functions is closely related to the work of Rampersad and Shallit~\cite{RampersadShallit2005}, 
 who investigated sequences $\infw{x}$ such that all sufficiently long factors $w$ have the property  
that $w^{R}$ is not a factor of $\infw{x}$. 
 Also, the evaluation of reflection complexities for sequences 
 is related to the enumeration of palindromes contained in sequences; see, e.g., Fici and Zamboni~\cite{FiciZamboni2013}. 

 This paper is organized as follows. In Section~\ref{sec:background}, we introduce the notation and definitions needed for the paper. 
 In Section~\ref{sec:General results}, we give general results about reflection complexity. In particular, we investigate its growth properties and relationships 
 with other complexity functions. In Section~\ref{sec:graphs}, we give a graph-theoretic interpretation of reflection-equivalent classes and prove an inequality for reflection complexity. In Sections \ref{sectioneventual}, 
 \ref{sec:Sturmian}, and 
 \ref{sectionrich}, 
 respectively, we 
 investigate reflection 
 complexity for 
 eventually periodic sequences, 
 Sturmian sequences and generalizations, and 
 reversal-closed and rich sequences. 
 Next, in Section~\ref{sec:automatic}, we focus on classical automatic sequences and, with the use of the free software \texttt{Walnut}, we prove that the reflection complexity function for automatic sequences is a
 regular sequence. 
 We also study  reflection complexity for famous automatic sequences such as the Thue--Morse sequence.
 Finally, some further research directions and open questions
 are considered in Section \ref{sec:conclusion}.

\section{Preliminaries}\label{sec:background}

\textbf{Generalities.}
For a general reference on words, we cite~\cite{Lothaire1997}.
An \emph{alphabet} is a finite set of elements called \emph{letters}.
A \emph{word} over an alphabet $A$ is a finite sequence of letters from $A$.
The \emph{length} of a word, denoted between vertical bars, is the number of its letters (counting multiplicities).
The \emph{empty word} is the only $0$-length word, denoted by $\eps$.
For all $n\ge 0$, we let $A^n$ denote the 
set of all length-$n$ words over $A$.
We let $A^*$ denote the set of words over $A$, including the empty word and equipped with the concatenation operation.
In order to distinguish finite words and infinite sequences, we write the latter in bold.
Except for complexity functions, we start indexing words and sequences at $1$, unless otherwise specified.
A \emph{factor} of a word or a sequence is one of its (finite and contiguous) subblocks. A \emph{prefix} (resp., {\emph suffix}) is a starting (resp., ending) factor.
Given a word $w$, its $n$th term is written $w(n)$ for $1\leq n\leq |w|$.
The factor starting at position $n$ and ending at position $m$ with $1\leq m\leq n\leq |w|$ is written $w[m..n]$.
We let $\Fac_w$ denote the set of all factors of $w$ and, for each natural number $n$, we let $\Fac_w(n)$ denote the set of all length-$n$ factors of $w$.
A factor $u$ of a word $w$ over $A$ is \emph{right} (resp., \emph{left}) \emph{special} if $ua$ and $ub$ (resp., $au$ and $bu$) are factors of $w$ for some distinct letters $a,b\in A$.
A sequence $\infw{x}$ is \emph{reversal-closed} if, for every factor $w$ of $\infw{x}$, the word $w^R$ is also a factor of $\infw{x}$.
A sequence $\infw{x}$ is \emph{eventually periodic} if there exist finite words $u,v$, with $v$ nonempty, such that $\infw{x}=uv^\omega$ where $v^\omega=vvv\cdots$ denotes the infinite concatenation of $v$.
A sequence that is not eventually periodic is said to be \emph{aperiodic}.
A sequence is said to be \emph{recurrent} if every factor occurs infinitely many times; it is \emph{uniformly recurrent} if each factor occurs with bounded gaps, i.e., for all factors $w$, there is some length $m = m(w)$ such that $w$ occurs in every length-$m$ block.

\medskip

\noindent \textbf{Morphisms.}
 Let $A$ and $B$ be finite alphabets. 
A \emph{morphism} $f \colon A^* \to B^*$ is a map satisfying $f(uv)=f(u)f(v)$ for all $u,v\in A^*$.
In particular, $f(\eps)=\eps$, and $f$ is entirely determined by the images of the letters in $A$.
For an integer $k\ge 2$, a morphism is \emph{$k$-uniform} if it maps each letter to a length-$k$ word.
A $1$-uniform morphism is called a \emph{coding}.
A sequence $\infw{x}$ is \emph{pure morphic} if it is a fixed point of a morphism, i.e., there exist a morphism $f\colon A^* \to A^*$ and a letter $a \in A$ such that $\infw{x} = f^{\omega}(a)$, where $f^{\omega}(a) = \lim_{n\to \infty} f^n(a)$.
A sequence $\infw{y}$ is \emph{morphic} if there exist a pure morphic sequence $\infw{x}$ and a coding $g\colon A^* \to B^*$ such that $\infw{y} = g(\infw{x})$.
We let $E\colon \{0,1\}^*\to \{0,1\}^*$ be the \emph{exchange morphism} defined by $E(0)=1$ and $E(1)=0$.
We naturally extend $E$ to sequences.

\medskip

\noindent \textbf{Numeration systems.}
Let $U=(U(n))_{n\ge 0}$ be an increasing sequence of integers with $U(0)=1$.
Any integer $n$ can be decomposed in a greedy way as $n=\sum_{i=0}^t c(i)\, U(i)$ with non-negative integer coefficients $c(i)$.
The word $c(t)\cdots c(0)\in\mathbb{N}^*$ is said to be the {\em (greedy) $U$-representation} of $n$. 
By convention, the greedy representation of $0$ is the empty word~$\eps$, and the greedy representation of $n>0$ starts with a non-zero digit.
For $c_t\cdots c_0\in\mathbb{N}^*$, we let $\val_U(c(t)\cdots c(0))$ denote the integer $\sum_{i=0}^t c(i)\, U(i)$.
A sequence $U$ satisfying all the above conditions defines a \emph{positional numeration system}.

\medskip

\noindent \textbf{Automatic and regular sequences.}
For the case of integer base numeration systems, a classical reference on automatic sequences is~\cite{AlloucheShallit2003}, while~\cite{RigoMaes2002,Shallit1988} treat the case of more exotic numeration systems.

Let $U=(U(n))_{n\ge 0}$ be an positional numeration system.
A sequence $\mathbf{x}$ is {\em $U$-automatic} if there exists a deterministic finite automaton with output (DFAO) $\mathcal{A}$ such that, for all $n\ge 0$, the $n$th term $\mathbf{x}(n)$ of $\infw{x}$ is given by the output $\mathcal{A}(\rep_U(n))$ of $\mathcal{A}$.
In particular, if $U$ is the sequence of consecutive powers of an integer $k\ge 2$, then $\mathbf{x}$ is said to be {\em $k$-automatic}.

 It is known that a sequence is $k$-automatic if and only if it is the image, under a coding, of a fixed point 
 of a $k$-uniform morphism~\cite{AlloucheShallit2003}. 

A generalization of automatic sequences to infinite alphabets is the following~\cite{AlloucheShallit2003,RigoMaes2002,Shallit1988}.
Let $U=(U(n))_{n\ge 0}$ be a positional numeration system.
A sequence $\infw{x}$ is \emph{$U$-regular} if there exist a column vector $\lambda$, a row vector $\gamma$ and matrix-valued morphism $\mu$ such that $\infw{x}(n)=\lambda \mu(\rep_U(n)) \gamma$.
Such a system of matrices forms a \emph{linear representation} of $\infw{x}$.
In particular, if $U$ is the sequence of consecutive powers of an integer $k\ge 2$, then $\mathbf{x}$ is said to be {\em $k$-regular}.

Another definition of $k$-regular sequences is the following one~\cite{AlloucheShallit2003}.
Consider a sequence $\infw{x}$ and an integer $k\ge 2$.
The \emph{$k$-kernel} of $\infw{x}$ is the set of subsequences of the form $(\infw{x}(k^e n + r))_{n\ge 0}$ where $r\in\{1,2,\ldots,k^e\}$.
A sequence is $k$-regular if the $\Z$-module generated by its $k$-kernel is finitely generated.
A sequence is then $k$-automatic if and only if its $k$-kernel is finite~\cite{AlloucheShallit2003}.

\medskip

\noindent \textbf{Sturmian sequences.} A sequence 
$\infw{x}$ is \emph{Sturmian} if its factor complexity 
function satisfies $\rho_{\infw{x}}(n) = n+1$ (see, 
e.g.,~\cite{AlloucheShallit2003, Lothaire2002}). Sturmian
sequences have minimal factor complexity among all 
non-eventually periodic sequences, as proved by
Morse and Hedlund~\cite{MorseHedlund1938}. 

\begin{theorem}[{\cite{MorseHedlund1938}}]
\label{thm:morse-hedlund}
Let $\infw{x}$ be a sequence and let $\ell$ be the number of distinct letters occurring in $\infw{x}$.
The following properties are equivalent.
\begin{itemize}
 \item[(a)] The sequence $\infw{x}$ is eventually periodic.
 \item[(b)] We have $\rho_{\infw{x}}(n) = \rho_{\infw{x}}(n+1)$ for some $n\ge 0$. 
 \item[(c)] We have $\rho_{\infw{x}}(n)<n+\ell-1$ for some $n\ge 1$.
 \item[(d)] The factor complexity $\rho_{\infw{x}}$ is bounded.
\end{itemize}
\end{theorem}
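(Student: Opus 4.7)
The plan is to establish the four-way equivalence by the cycle \((a) \Rightarrow (d) \Rightarrow (b) \Rightarrow (c) \Rightarrow (a)\), since each individual implication is short. The implication \((a) \Rightarrow (d)\) is the easiest: if $\infw{x} = uv^\omega$ with $|u|=q$ and $|v|=p$, then every factor of $\infw{x}$ is determined up to its starting position, and starting positions beyond $q$ matter only modulo $p$, so $\rho_{\infw{x}}(n) \le q+p$ for all $n$. For \((d) \Rightarrow (b)\), I would first note that $\rho_{\infw{x}}$ is non-decreasing: each length-$n$ factor of $\infw{x}$ admits at least one right extension inside $\infw{x}$, so $\rho_{\infw{x}}(n+1)\ge \rho_{\infw{x}}(n)$. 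A bounded non-decreasing $\Nz$-valued sequence is eventually constant, which immediately gives some $n$ with $\rho_{\infw{x}}(n)=\rho_{\infw{x}}(n+1)$.

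For \((b) \Rightarrow (c)\), the key observation is that $\rho_{\infw{x}}(n_0+1) = \sum_{w\in\Fac_{n_0}(\infw{x})} |R(w)|$, where $R(w)$ is the set of right extensions of $w$ inside $\infw{x}$. Since each $|R(w)|\ge 1$, the equality $\rho_{\infw{x}}(n_0)=\rho_{\infw{x}}(n_0+1)$ forces $|R(w)|=1$ for every length-$n_0$ factor $w$. Iterating, $\rho_{\infw{x}}(n) = \rho_{\infw{x}}(n_0)$ for all $n\ge n_0$, so $\rho_{\infw{x}}(n) < n+\ell-1$ as soon as $n > \rho_{\infw{x}}(n_0) - \ell + 1$, which establishes~(c).

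The main obstacle is \((c) \Rightarrow (a)\). Here I would use that $\rho_{\infw{x}}(1) = \ell$ together with monotonicity: if $\rho_{\infw{x}}$ were strictly increasing on $\{1,2,\ldots,n\}$, then $\rho_{\infw{x}}(n) \ge \ell + (n-1) = n+\ell-1$. Hypothesis~(c) therefore forces some $k \ge 1$ with $\rho_{\infw{x}}(k) = \rho_{\infw{x}}(k+1)$, and by the argument above, every length-$k$ factor of $\infw{x}$ has a unique right extension in $\infw{x}$. Consequently, once a given length-$k$ window $\infw{x}[i..i+k-1]$ appears, the letter $\infw{x}(i+k)$ is determined, and then $\infw{x}[i+1..i+k]$ is determined, and so on forever. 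Since there are only finitely many length-$k$ factors, the sequence of successive windows $(\infw{x}[i..i+k-1])_{i\ge 1}$ must repeat a value, and from that repetition onward it is purely periodic by the deterministic extension rule; hence $\infw{x}$ itself is eventually periodic. This closes the cycle and completes the proof plan.
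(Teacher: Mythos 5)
Your proposal is correct. Note that the paper does not prove this statement at all: it is the classical Morse--Hedlund theorem, quoted with a citation to the 1938 original, so there is no internal proof to compare against. Your cycle \((a)\Rightarrow(d)\Rightarrow(b)\Rightarrow(c)\Rightarrow(a)\) is the standard textbook argument: boundedness from eventual periodicity, monotonicity of $\rho_{\infw{x}}$ via right extensions, the counting identity $\rho_{\infw{x}}(n+1)=\sum_{w}|R(w)|$ forcing unique right extensions when $\rho_{\infw{x}}(n)=\rho_{\infw{x}}(n+1)$, and the determinism-plus-pigeonhole step giving eventual periodicity. The only place you gloss is the word ``iterating'' in \((b)\Rightarrow(c)\): to conclude $\rho_{\infw{x}}(m)=\rho_{\infw{x}}(n_0)$ for all $m\ge n_0$ you should note explicitly that unique right extendability propagates upward (if $v=bw$ has length $n_0+1$ and $va$, $va'$ are factors, then $wa$, $wa'$ are factors of length $n_0+1$, so $a=a'$); alternatively, the determinism argument you use in \((c)\Rightarrow(a)\) already yields eventual periodicity directly from $\rho_{\infw{x}}(n_0)=\rho_{\infw{x}}(n_0+1)$, after which \((a)\Rightarrow(d)\) bounds $\rho_{\infw{x}}$ and gives \((c)\). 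Either patch is one sentence, so there is no genuine gap.
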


\begin{remark}\label{rem:morse-hedlund}
 This theorem implies in particular that $\rho_{\infw{x}}(n)$ is either bounded or it satisfies $\rho_{\infw{x}}(n) \geq n+\ell-1$ for all $n$. 
 Thus the minimal factor complexity among all non-eventually periodic sequences is $\rho_{\infw{x}}(n) = n+1$ for all $n$, i.e., the 
 complexity of Sturmian sequences. Actually there is another ``growth gap'' for $\rho_{\infw{x}}$. Recall that a sequence $\infw{x}$ 
 is called {\em quasi-Sturmian} if there exists a constant $C$ such that, for $n$ large enough, one has $\rho_{\infw{x}}(n) = n + C$ 
 (see \cite{Cassaigne1997}; also see \cite{Chernyatev2008}). 
 It is known that if $\infw{x}$ is neither eventually 
 periodic nor quasi-Sturmian, then 
$\rho_{\infw{x}}(n) - n$ tends to infinity: this result 
is due to Coven~\cite[Lemma~1.3]{Coven1975}; 
also see Cassaigne's proof cited 
in~\cite[Proof of Theorem~3, p.~23]{Allouche2000}. Thus 
\begin{itemize}
 \item[(a)] either $\rho_{\infw{x}}(n)$ is bounded,
 which happens if and only if $\infw{x}$ is eventually
 periodic;
 \item[(b)] or else $\rho_{\infw{x}}(n) = n +C$ for
 some constant $C$ and all $n$ large enough, which
 means that $\infw{x}$ is quasi-Sturmian;
 \item[(c)] or else $\rho_{\infw{x}}(n) - n$ tends to
 infinity.
\end{itemize}
One more point (once explained to the first author by 
Jean Berstel) is that if $\rho_{\infw{x}}(n) = n + 1$ for 
$n$ large enough, then $\rho_{\infw{x}}(n) = n + 1$ for 
all $n$. Namely, let $n_0$ be the least integer $n$ for 
which $\rho_{\infw{x}}(n) = n + 1$, and suppose that 
$n_0 >1$. Hence $\rho_{\infw{x}}(n_0-1) \neq n_0$. The 
sequence $\infw{x}$ cannot be eventually periodic, since
its factor complexity is not bounded. Thus, one has 
$\rho_{\infw{x}}(n) \geq n+1$ for all $n$. Hence, in 
particular, $\rho_{\infw{x}}(n_0 - 1) \geq n_0$. Thus
$\rho_{\infw{x}}(n_0 - 1) > n_0$.
Since $\rho_{\infw{x}}$ is non-decreasing, we have that 
$n_0 < \rho_{\infw{x}}(n_0-1) \leq \rho_{\infw{x}}(n_0) = n_0+1$. This gives $\rho_{\infw{x}}(n_0-1) = n_0+1 = \rho_{\infw{x}}(n_0)$,
which is impossible since $\infw{x}$ is not
eventually periodic. In other words, in the second item
above, if $C=1$, then $\infw{x}$ is Sturmian.
\end{remark}

\section{General results}\label{sec:General results}
 Given a sequence $\infw{x}$, we can decompose its 
 factor complexity function $\rho_{\infw{x}}$ and its 
 reflection complexity function $r_{\infw{x}}$ by using 
 the following functions: for all $n\ge 0$, we let
 
\begin{itemize}
 \item[(a)] 
 $\Unr_{\infw{x}}(n)$ denote the number of ``unreflected'' 
 length-$n$ factors $w$ of $\infw{x}$ such that 
 $w^R$ is not a factor of $\infw{x}$; 
 
\item[(b)] 
 $\Refl_{\infw{x}}(n)$ denote the number of ``reflected'' 
 length-$n$ factors $w$ of $\infw{x}$ such that 
 $w^R$ is also a factor of $\infw{x}$; and

\item[(c)] 
 $\Pal_{\infw{x}}(n)$ denote the number of length-$n$ 
 palindrome factors $w$ of $\infw{x}$ (i.e., the 
 {\em palindrome complexity} function of $\infw{x}$ \cite{AlloucheBaakeCassaigneDamanik2003}). 
\end{itemize}

\noindent In particular, we have 
\begin{align}\label{eq:combinationURP} 
\begin{array}{l}
\rho_{\infw{x}} = \Unr_{\infw{x}} + \Refl_{\infw{x}},\\
r_{\infw{x}} = \Unr_{\infw{x}} + \dfrac{1}{2}(\Refl_{\infw{x}} - 
 \Pal_{\infw{x}}) + 
 \Pal_{\infw{x}}.
\end{array}
\end{align} 

\begin{example}
Let $\infw{f} = 01001010\cdots$ denote the Fibonacci 
sequence, which is the fixed point of $0\mapsto 01, 
1\mapsto 0$.
Its length-$5$ factors are $u_1=01001$, $u_2=10010$, $u_3=00101$, $u_4=01010$, $u_5=10100$, and $u_6=00100$.
Observe that $u_1,u_2,u_3,u_5$ are reflected (second type), and $u_4$ and $u_6$ are palindromes (second and third types).
Furthermore $\infw{f}$ has no unreflected factor, because 
Sturmian words are reversal-closed; see
\cite[Theorem~4, p.\ 77]{DroubayPirillo1999}. 
We therefore obtain $r_{\infw{f}}(6) = 0 + \frac{1}{2}(6-2) + 2 =4$.
\end{example}
 
 The relationships among the complexity functions $\Unr_{\infw{x}}$, $\Refl_{\infw{x}}$, $\rho_{\infw{x}}$, $r_{\infw{x}}$, 
 and $\Pal_{\infw{x}}$ motivates the study of the combinations of these 
functions indicated in Equalities~\eqref{eq:combinationURP}. 
This is illustrated below. 

\begin{lemma}\label{lem:unr-ref-pal}
For a sequence $\infw{x}$ and for all $n\ge 1$, we have 
\[
\rho_{\infw{x}}(n) - r_{\infw{x}}(n) =
\frac{1}{2}(\Refl_{\infw{x}}(n) -
\Pal_{\infw{x}}(n))
\]
and
\[ 
2 r_{\infw{x}}(n) - \rho_{\infw{x}}(n) = 
\Unr_{\infw{x}}(n) + 
\Pal_{\infw{x}}(n).
\]
\end{lemma}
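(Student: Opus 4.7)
The plan is to deduce both identities by purely algebraic manipulation of the two formulas already supplied in Equation~\eqref{eq:combinationURP}. Those equations express $\rho_{\infw{x}}(n)$ and $r_{\infw{x}}(n)$ as explicit linear combinations of $\Unr_{\infw{x}}(n)$, $\Refl_{\infw{x}}(n)$, and $\Pal_{\infw{x}}(n)$, so the lemma should follow with no further combinatorial input.

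For the first identity, I would subtract the expression for $r_{\infw{x}}(n)$ from the expression for $\rho_{\infw{x}}(n)$. The $\Unr_{\infw{x}}(n)$ contributions cancel, and what remains is $\Refl_{\infw{x}}(n) - \frac{1}{2}(\Refl_{\infw{x}}(n) - \Pal_{\infw{x}}(n)) - \Pal_{\infw{x}}(n)$, which collapses to $\frac{1}{2}(\Refl_{\infw{x}}(n) - \Pal_{\infw{x}}(n))$. For the second identity, I would compute $2r_{\infw{x}}(n) - \rho_{\infw{x}}(n)$ using the same equations: doubling $r_{\infw{x}}(n)$ produces $2\Unr_{\infw{x}}(n) + \Refl_{\infw{x}}(n) - \Pal_{\infw{x}}(n) + 2\Pal_{\infw{x}}(n)$, and subtracting $\rho_{\infw{x}}(n) = \Unr_{\infw{x}}(n) + \Refl_{\infw{x}}(n)$ leaves exactly $\Unr_{\infw{x}}(n) + \Pal_{\infw{x}}(n)$, as required.

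There is essentially no obstacle, since the decomposition in~\eqref{eq:combinationURP} was already established by partitioning the length-$n$ factors into three disjoint classes: palindromes, unreflected factors, and reflected non-palindromic factors. The latter pair up under $\sim_r$ into classes of size two, which is precisely why $r_{\infw{x}}(n)$ contains the term $\frac{1}{2}(\Refl_{\infw{x}}(n) - \Pal_{\infw{x}}(n))$ while $\rho_{\infw{x}}(n)$ does not. Any reader preferring a direct combinatorial derivation could instead count contributions to each side class by class: palindromes contribute $1$ to both $\rho_{\infw{x}}$ and $r_{\infw{x}}$, unreflected factors also contribute $1$ to both, and each reflected non-palindromic $\sim_r$-equivalence class contributes $2$ to $\rho_{\infw{x}}$ but only $1$ to $r_{\infw{x}}$; matching these contributions reproduces the two stated identities.
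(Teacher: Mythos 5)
Your proposal is correct and follows exactly the paper's route: the paper also derives both identities directly from Equalities~\eqref{eq:combinationURP}, dismissing the algebra as immediate, and your computations simply spell out that cancellation explicitly.
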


\begin{proof}
Immediate from Equalities~\eqref{eq:combinationURP}.
\end{proof}

This lemma implies the following bounds on the ratio $r/\rho$.

\begin{theorem}\label{thm:r-and-rho}
For a sequence $\infw{x}$ and for all $n\ge 0$, we have 
\[
\frac{1}{2}\rho_{\infw{x}}(n) 
\leq \frac{1}{2} (\rho_{\infw{x}}(n) + \Pal_{\infw{x}}(n))
\leq r_{\infw{x}}(n)
\leq \rho_{\infw{x}}(n).
\]
Furthermore, the equality cases are as follows:

\begin{itemize}

\item[(a)] We have $r_{\infw{x}}(n) = \rho_{\infw{x}}(n)$
if and only if every reflected length-$n$ factor of
$\infw{x}$ is a palindrome.

\item[(b)] We have $r_{\infw{x}}(n) = \frac{1}{2} (\rho_{\infw{x}}(n) + \Pal_{\infw{x}}(n))$
if and only if $\infw{x}$ has no unreflected 
length-$n$ factors. In particular, if the sequence 
$\infw{x}$ is reversal-closed, we have 
$r_{\infw{x}} = 
\frac{1}{2} (\rho_{\infw{x}} + \Pal_{\infw{x}})$. 

\item[(c)] We have $r_{\infw{x}}(n) = \frac{1}{2} \rho_{\infw{x}}(n)$ if and only if $\infw{x}$ has no 
palindrome of length $n$ and each of its length-$n$ 
factors is reflected.
\end{itemize}
\end{theorem}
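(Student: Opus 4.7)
The plan is to deduce everything directly from the two decompositions in \eqref{eq:combinationURP}, namely $\rho_{\infw{x}}(n) = \Unr_{\infw{x}}(n) + \Refl_{\infw{x}}(n)$ and $r_{\infw{x}}(n) = \Unr_{\infw{x}}(n) + \tfrac12\Refl_{\infw{x}}(n) + \tfrac12\Pal_{\infw{x}}(n)$, together with the obvious observations that $\Unr_{\infw{x}}(n)$, $\Refl_{\infw{x}}(n)$, $\Pal_{\infw{x}}(n) \ge 0$ and that a palindrome is reflected (every palindrome $w$ trivially satisfies $w = w^R$, so $\Pal_{\infw{x}}(n) \le \Refl_{\infw{x}}(n)$).

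First I would establish the upper bound $r_{\infw{x}}(n) \le \rho_{\infw{x}}(n)$ by computing, using Lemma~\ref{lem:unr-ref-pal} (or again from \eqref{eq:combinationURP}), that $\rho_{\infw{x}}(n) - r_{\infw{x}}(n) = \tfrac12(\Refl_{\infw{x}}(n) - \Pal_{\infw{x}}(n))$, which is $\ge 0$ by the palindrome--reflected inequality above. Equality holds iff $\Refl_{\infw{x}}(n) = \Pal_{\infw{x}}(n)$, i.e., every reflected length-$n$ factor of $\infw{x}$ is a palindrome, which is case~(a).

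Next I would handle the middle inequality by computing $r_{\infw{x}}(n) - \tfrac12(\rho_{\infw{x}}(n) + \Pal_{\infw{x}}(n)) = \tfrac12\Unr_{\infw{x}}(n) \ge 0$, with equality iff $\Unr_{\infw{x}}(n) = 0$, proving case~(b). The ``in particular'' claim follows by noting that reversal-closedness forces $w^R$ to be a factor whenever $w$ is, so $\Unr_{\infw{x}}(n) = 0$ for every $n$. The leftmost inequality $\tfrac12\rho_{\infw{x}}(n) \le \tfrac12(\rho_{\infw{x}}(n) + \Pal_{\infw{x}}(n))$ is immediate from $\Pal_{\infw{x}}(n) \ge 0$.

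Finally, for case~(c), combining the above gives $r_{\infw{x}}(n) - \tfrac12\rho_{\infw{x}}(n) = \tfrac12(\Unr_{\infw{x}}(n) + \Pal_{\infw{x}}(n))$, so equality holds iff $\Unr_{\infw{x}}(n) = \Pal_{\infw{x}}(n) = 0$, which means $\infw{x}$ has no length-$n$ palindromes and every length-$n$ factor is reflected. There is no real obstacle here: once the two decompositions in \eqref{eq:combinationURP} are written down, the inequalities and their equality conditions are routine algebraic identities, and the only conceptual input beyond that is the trivial inclusion of palindromes among reflected factors.
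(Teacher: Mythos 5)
Your proposal is correct and follows essentially the same route as the paper: the paper's proof also deduces the inequalities and equality cases directly from Lemma~\ref{lem:unr-ref-pal}, i.e.\ from the decompositions in~\eqref{eq:combinationURP}. The only difference is that you spell out the routine details (such as $\Pal_{\infw{x}}(n) \le \Refl_{\infw{x}}(n)$ because every palindrome is reflected) that the paper leaves implicit.
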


\begin{proof}
The inequalities and the equality cases are immediate 
consequences of Lemma~\ref{lem:unr-ref-pal}.
\end{proof}

\begin{remark}\label{rem:folklore-reversal-recurrent}
 It is known that if a sequence $\infw{x}$ is reversal-closed, then $\infw{x}$ is recurrent: it suffices to adapt the proof 
 of~\cite[Proposition~1, p.~176]{Dekking80-81}, as indicated in~\cite{BrlekLadouceur2003}. 
 Also note that if a sequence $\infw{x}$ is uniformly 
recurrent and contains infinitely many distinct palindromes, 
then $\infw{x}$ is reversal-closed~\cite[Theorem~3.2]{BalkovaPelantovaStarosta2010}.
\end{remark}

One can say more for uniformly recurrent sequences.
The following dichotomy holds.

\begin{theorem}\label{thm:uniformly-recurrent}
Let $\infw{x}$ be a uniformly recurrent sequence. 
Then either it is reversal-closed, or else it has no
long reflected factors (which implies that $\infw{x}$
has no long palindromes). In other words, 
\begin{itemize}
 \item[(a)] 
 either $\rho_{\infw{x}} = \Refl_{\infw{x}}$, which implies the equality
 $r_{\infw{x}} = \frac{1}{2}(\rho_{\infw{x}} + \Pal_{\infw{x}})$;
 \item[(b)]
 or else there exists $n_0$ such that 
 $\rho_{\infw{x}}(n) = \Unr_{\infw{x}}(n)$ 
 for all $n \geq n_0$, which implies
 $r_{\infw{x}}(n) = \rho_{\infw{x}}(n)$
 for all $n \geq n_0$.
\end{itemize}
\end{theorem}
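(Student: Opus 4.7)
The plan is to split into the two stated cases and do the easy direction first. If $\infw{x}$ is reversal-closed, then by definition every factor $w$ satisfies $w^R \in \Fac(\infw{x})$, so $\Unr_{\infw{x}}(n)=0$ for all $n$. Then $\rho_{\infw{x}}=\Refl_{\infw{x}}$ and substituting into Equation~\eqref{eq:combinationURP} (or invoking Theorem~\ref{thm:r-and-rho}(b)) immediately gives $r_{\infw{x}}=\tfrac12(\rho_{\infw{x}}+\Pal_{\infw{x}})$. This is essentially a restatement of part of Theorem~\ref{thm:r-and-rho} and requires no further work.

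For the harder direction, assume $\infw{x}$ is uniformly recurrent but \emph{not} reversal-closed. Then there exists a factor $u$ of $\infw{x}$ such that $u^R$ is \emph{not} a factor of $\infw{x}$. Uniform recurrence, applied to the single factor $u$, supplies an integer $m=m(u)$ such that every length-$m$ block of $\infw{x}$ contains $u$ as a factor. Set $n_0 := m$.

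The key step is the following observation: for any $n \geq n_0$, no length-$n$ factor $w$ of $\infw{x}$ can be reflected. Indeed, if $w$ is a length-$n$ factor, then $|w|\geq m$ forces $u$ to occur as a factor of $w$; taking reversals, $u^R$ occurs as a factor of $w^R$. Thus if $w^R$ were also a factor of $\infw{x}$, we would conclude that $u^R$ is a factor of $\infw{x}$, contradicting our choice of $u$. Hence $\Refl_{\infw{x}}(n)=0$ for all $n\geq n_0$, which forces $\rho_{\infw{x}}(n)=\Unr_{\infw{x}}(n)$ and, since every palindrome is reflected, $\Pal_{\infw{x}}(n)=0$ as well. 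Plugging these into Equation~\eqref{eq:combinationURP} yields $r_{\infw{x}}(n)=\rho_{\infw{x}}(n)$ for all $n\geq n_0$, as desired.

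The main potential obstacle is conceptual rather than technical: one has to notice that ``not reversal-closed'' gives a single explicit obstruction word $u$ whose reverse is avoided, and that uniform recurrence then propagates this obstruction to every sufficiently long factor via the ``bounded gap'' property. Once this is pinpointed, the rest is bookkeeping via Lemma~\ref{lem:unr-ref-pal}. No numeration-system or morphism machinery is needed.
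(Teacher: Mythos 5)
Your proposal is correct and follows essentially the same route as the paper: the reversal-closed case is dispatched by Theorem~\ref{thm:r-and-rho}(b), and otherwise an unreflected factor $u$ is propagated by uniform recurrence (with bounded gap $m(u)$) into every sufficiently long factor, forcing $\Refl_{\infw{x}}(n)=0$, hence $\Pal_{\infw{x}}(n)=0$ and $r_{\infw{x}}(n)=\rho_{\infw{x}}(n)=\Unr_{\infw{x}}(n)$ for large $n$. Your write-up merely makes explicit the choice $n_0=m(u)$ and the reversal bookkeeping that the paper leaves implicit.
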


\begin{proof}
 If $\infw{x}$ is reversal-closed, then $r_{\infw{x}} = \frac{1}{2}(\rho_{\infw{x}} + \Pal_{\infw{x}})$ from Theorem~\ref{thm:r-and-rho}(b) above.
Now suppose that $\infw{x}$ has an unreflected factor
$w$. Since $\infw{x}$ is uniformly recurrent, every sufficiently long
factor of $\infw{x}$ contains $w$ as a
factor, which implies that this long factor itself 
is unreflected.
This exactly says that $\Refl_{\infw{x}}(n) = 0$
for $n$ large enough (and in particular 
$\Pal_{\infw{x}}(n) = 0$ for $n$ large enough). This implies from Equalities~\eqref{eq:combinationURP} that, for $n$ large enough, 
$\rho_{\infw{x}}(n) = \Unr_{\infw{x}}(n)$, and so $r_{\infw{x}}(n) = \rho_{\infw{x}}(n)$.
\end{proof}

 Now we exhibit sequences with particular behaviors of their reflection complexity. 

\begin{example}
It is possible to construct an aperiodic automatic
sequence $\infw{x}$ such that $r_{\infw{x}}(n) = \rho_{\infw{x}}(n)$ and $\Pal_{\infw{x}}(n) > 0$ 
for all $n$.
 An example of such a sequence is given by a fixed 
 point of the morphism $0 \mapsto 01$, $1 \mapsto 23$,
 $2 \mapsto 45$, $3 \mapsto 23$, $4 \mapsto 44$, 
 and $5 \mapsto 44$. 
 This sequence has no reflected factors except 
 palindromes, and there is exactly one palindrome 
 of each length $>1$.
\end{example}

\begin{example}\label{ex:halffactor}
 Consider the sequence $\infw{x}$ on $\{0,1\}$ whose $n$th prefix $x_n$ is given recursively as follows: $x_0=01$ and $x_{n+1}=x_n01x_{n}^R$ for all $n\ge 0$.
 See~\cite[Section~3]{BerstelBoassonCartonFagnot2009} or~\cite[Example~3.1]{BalkovaPelantovaStarosta2010}.
 The sequence $\infw{x}$ is uniformly recurrent, reversal-closed, $2$-automatic, 
 and accepted by a DFAO of 6 states 
 (see, e.g., 
 \cite{AlloucheShallit1998}), and contains only a finite number of palindromes.
 Furthermore, for all sufficiently large $n$, we have $r_{\infw{x}}(n) = 
\dfrac{1}{2} \rho_{\infw{x}}(n)$.
\end{example}

\begin{example}
 It is also possible to construct an aperiodic automatic sequence where the only palindromes are of length 1, but there are reflected 
 factors of each length $> 1$. Let $g_n$ be the prefix of length $2^n - 2$ of $(012)^\omega$. Then an example 
 of an automatic sequence satisfying the desired properties is $$ \infw{x} = 3 \, g_1 \, 4 \ 5 
 \, g_2^R \, 6 \ 3 
 \, g_3 \, 4 \ 5 
 \, g_4^R \, 6 \ 3 
 \, g_5 \, 4 \ 5 \, g_6^R \, 6 
 \cdots, $$
 where~\cite[Theorem~1]{RampersadShallit2005} is required (observe that we intertwine the sequences $(3456)^\omega$ and $g_1 g_2^R g_3 g_4^R \cdots$ to build $\infw{x}$).
\end{example}

\begin{example}
 There is an automatic 
sequence $\infw{x}$ over the alphabet $\{0, 1\}$ 
such that $\text{Ref}_{\infw{x}}(n) = \Omega(n)$
and such that $\text{Unr}_{\infw{x}}(n) = \Omega(n)$. 
Namely, consider the image 
under the coding $0, 1, 2 \mapsto 0$ and 
$3,4\mapsto 1$ of the fixed point, starting with 
$0$, of the morphism $0 \mapsto 01$, $1 \mapsto 23$, 
$2 \mapsto 32$, $3 \mapsto 42$, and $4 \mapsto 43$. 
\end{example}

\begin{example}\label{ex:039982}
We also provide a construction of an 
automatic sequence $\infw{x}$ such that
$r_{\infw{x}}(n+1) < r_{\infw{x}}(n)$ for all 
odd $n \geq 3$. In particular, let $\infw{x}$ 
denote the sequence given by applying the coding 
$a, b, d \mapsto 1$ and $c \mapsto 0$ to the fixed 
point, starting with $a$, of the morphism defined by
$a \mapsto ab$, $b \mapsto cd$, 
$c \mapsto cd$, and $d \mapsto bb$. 
This gives us sequence~\cite[{\tt A039982}]{Sloane} in the OEIS. 
Computing the reflection complexity of $\infw{x}$ (e.g., using
\texttt{Walnut}) gives that
\[
r_{\infw{x}}(n)
=
\begin{cases}
n+1, & \text{for odd $n\geq1$}; \\
n-1, & \text{for even $n \geq 4$}.
\end{cases}
\]
Actually we even have 
that $r_{\infw{x}}(n+1) = r_{\infw{x}}(n) - 1$ for 
all odd $n \geq 3$. See Theorem~\ref{thm:Pal-first-diff-rho}.
\end{example}

With extra hypotheses on a sequence $\infw{x}$, 
we can give more precise results in comparing 
the respective growths of reflection 
and factor complexities. We will need 
Theorem~\ref{thm:Pal-first-diff-rho} below.
Note that Part (b) of this theorem was originally 
stated for uniformly recurrent sequences: see~\cite[Theorem~1.2]{BalaviMasakovaPelantova2007}.
However, its proof only requires 
the sequences to be recurrent (see~\cite[p.~449]{BalkovaPelantovaStarosta2010} and also~\cite[Footnote, p.~493]{BrlekReutenauer2011}). Furthermore we have seen
that a reversal-closed sequence must be recurrent (see
Remark~\ref{rem:folklore-reversal-recurrent}). Thus
we can state the theorem as follows (also see
Theorem~\ref{thm:uniformly-recurrent}).

\begin{theorem}\label{thm:Pal-first-diff-rho}
\leavevmode
\begin{itemize}
\item[(a)]
Let $\infw{x}$ be a uniformly recurrent sequence.
If $\infw{x}$ is not closed under reversal, then
$\Pal(n) = 0$ for $n$ large enough (actually one even 
has $\Refl_{\infw{x}}(n) = 0$ for $n$ large enough).

\item[(b)]
Let $\infw{x}$ be a reversal-closed sequence. For all 
$n\ge 0$, we have
\[
 \Pal_{\infw{x}}(n+1) + \Pal_{\infw{x}}(n) \le \rho_{\infw{x}}(n+1)-\rho_{\infw{x}}(n)+2.
\]
\end{itemize}
\end{theorem}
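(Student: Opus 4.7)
For part (a), essentially no new work is needed: the statement is a repackaging of Theorem~\ref{thm:uniformly-recurrent}. Assume $\infw{x}$ is uniformly recurrent but not reversal-closed, so that some factor $w$ of $\infw{x}$ satisfies $w^R \notin \Fac(\infw{x})$. By uniform recurrence, any sufficiently long factor $u$ contains $w$; such a $u$ is then unreflected, since otherwise $u^R$ would be a factor of $\infw{x}$ containing $w^R$ as a factor, contradicting the choice of $w$. Therefore $\Refl_{\infw{x}}(n) = 0$ for all sufficiently large $n$, and a fortiori $\Pal_{\infw{x}}(n) = 0$, since palindromes are reflected.

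For part (b), the plan is to exploit the reversal symmetry of the Rauzy graph $\Gamma_n$ of $\infw{x}$ (vertices the length-$n$ factors, edges the length-$(n+1)$ factors, with an edge from $u$ to $v$ whenever some length-$(n+1)$ factor has $u$ as prefix and $v$ as suffix). Writing $D(n) := \rho_{\infw{x}}(n+1) - \rho_{\infw{x}}(n)$ and using the standard identity
\[
D(n) \;=\; \sum_{w \in \Fac_n(\infw{x})} \bigl(R(w) - 1\bigr) \;=\; \sum_{w \in \Fac_n(\infw{x})} \bigl(L(w) - 1\bigr),
\]
where $R(w)$ and $L(w)$ count right- and left-extension letters, the reversal-closure hypothesis makes $w \mapsto w^R$ a graph involution on $\Gamma_n$ whose fixed vertices are precisely the palindromes counted by $\Pal_{\infw{x}}(n)$, and whose fixed edges are precisely the palindromes counted by $\Pal_{\infw{x}}(n+1)$. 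Moreover, at every palindromic vertex $p$ we have $L(p) = R(p)$, because $p^R = p$ forces $ap \in \Fac(\infw{x}) \Leftrightarrow pa \in \Fac(\infw{x})$.

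The main combinatorial step is then to show that the number of fixed vertices plus the number of fixed edges of this involution exceeds $D(n)$ by at most $2$. The idea is to pair, in each fibre of the projection $\Gamma_{n+1} \to \Gamma_{n-1}$ sending a palindrome $axa$ to its palindromic core $x$, each extra palindromic extension with a distinct excess right extension contributing to $D(n)$; one loses precisely two units of slack coming from the two extremal palindromes in the bi-infinite chain of palindromic cores (one at each parity, giving the $+2$). The hard part is this bookkeeping at the boundary of the palindromic chain, and in particular verifying that recurrence (which is automatic from reversal-closure, by Remark~\ref{rem:folklore-reversal-recurrent}), rather than the stronger uniform recurrence, suffices to make every palindrome extendable on both sides so that the pairing is well-defined.

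Since the inequality itself is established in~\cite{BalaviMasakovaPelantova2007} under uniform recurrence and the extension to the recurrent case is observed in~\cite{BrlekReutenauer2011}, our proof will consist of invoking those references together with the reduction of reversal-closedness to recurrence. The remaining work — verifying that the cited proof goes through verbatim once recurrence is in hand — is routine, and the only conceptual subtlety is the boundary-term analysis outlined above.
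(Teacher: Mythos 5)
Your proposal is correct and follows essentially the paper's own route: part (a) is the same uniform-recurrence argument the paper gives via Theorem~\ref{thm:uniformly-recurrent}, and for part (b) you, like the paper, ultimately invoke \cite[Theorem~1.2]{BalaviMasakovaPelantova2007} together with the observation that its proof needs only recurrence and that reversal-closed sequences are recurrent (Remark~\ref{rem:folklore-reversal-recurrent}). Your interpolated Rauzy-graph sketch is not load-bearing, since the final argument rests on the citations, exactly as in the paper.
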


\begin{remark}
There exist sequences that are uniformly recurrent, 
reversal-closed, and have no long palindromes
(see \cite{BerstelBoassonCartonFagnot2009}; also see
Example~\ref{ex:halffactor} above).
\end{remark}

   We deduce the following results from       Theorem~\ref{thm:Pal-first-diff-rho}.  

\begin{theorem}\label{thm:upper-bound-bmp}
 Let $\infw{x}$ be a reversal-closed sequence. 
 For all $n\ge 0$, we have
 \[
 \frac{1}{2} \rho_{\infw{x}}(n) \leq 
 r_{\infw{x}}(n) \leq 
 \frac{1}{2} \rho_{\infw{x}}(n+1) + 1.
 \]
\end{theorem}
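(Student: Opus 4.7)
The plan is to combine two earlier results: the exact formula from Theorem~\ref{thm:r-and-rho}(b) for reversal-closed sequences, and the Baláži--Masáková--Pelantová-type inequality recorded in Theorem~\ref{thm:Pal-first-diff-rho}(b).

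First, I would dispose of the lower bound $\tfrac{1}{2}\rho_{\infw{x}}(n) \le r_{\infw{x}}(n)$, which holds with no hypothesis on $\infw{x}$ and is precisely the leftmost inequality of Theorem~\ref{thm:r-and-rho}. So nothing new to do here.

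For the upper bound, since $\infw{x}$ is reversal-closed, Theorem~\ref{thm:r-and-rho}(b) gives the equality
\[
r_{\infw{x}}(n) = \tfrac{1}{2}\bigl(\rho_{\infw{x}}(n) + \Pal_{\infw{x}}(n)\bigr).
\]
The desired bound $r_{\infw{x}}(n) \le \tfrac{1}{2}\rho_{\infw{x}}(n+1) + 1$ therefore reduces to showing
\[
\Pal_{\infw{x}}(n) \le \rho_{\infw{x}}(n+1) - \rho_{\infw{x}}(n) + 2.
\]
But Theorem~\ref{thm:Pal-first-diff-rho}(b) (which applies because $\infw{x}$ is reversal-closed) gives the stronger statement
\[
\Pal_{\infw{x}}(n+1) + \Pal_{\infw{x}}(n) \le \rho_{\infw{x}}(n+1) - \rho_{\infw{x}}(n) + 2,
\]
so dropping the nonnegative term $\Pal_{\infw{x}}(n+1)$ on the left yields exactly what I need. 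Putting the two halves together finishes the proof.

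There is no real obstacle: the statement is essentially just the combination of Theorem~\ref{thm:r-and-rho}(b) with Theorem~\ref{thm:Pal-first-diff-rho}(b). The only care needed is to notice that one uses the latter with index $n$ (not $n+1$) and that throwing away $\Pal_{\infw{x}}(n+1)$ is harmless. The small constant $+1$ on the right-hand side of the theorem comes precisely from halving the $+2$ in the palindrome inequality.
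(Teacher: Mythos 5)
Your proof is correct and is essentially the paper's own argument: both combine the equality $r_{\infw{x}}(n)=\tfrac12(\rho_{\infw{x}}(n)+\Pal_{\infw{x}}(n))$ from Theorem~\ref{thm:r-and-rho}(b) with the inequality of Theorem~\ref{thm:Pal-first-diff-rho}(b), discarding the $\Pal_{\infw{x}}(n+1)$ term. The only cosmetic difference is that the paper also invokes Remark~\ref{rem:folklore-reversal-recurrent} to justify applying Theorem~\ref{thm:Pal-first-diff-rho}(b), which in your write-up is absorbed into citing that theorem for reversal-closed sequences as stated.
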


\begin{proof}
 Using the first inequality in Theorem~\ref{thm:r-and-rho}, the statement in Theorem~\ref{thm:r-and-rho}(b),
and Theorem~\ref{thm:Pal-first-diff-rho}(b), we have 
 \[
 \frac{1}{2} \rho_{\infw{x}}(n) \leq
 r_{\infw{x}}(n) = \frac{1}{2}(\rho_{\infw{x}}(n) 
 + \Pal_{\infw{x}}(n)) \leq
 \frac{1}{2} (\rho_{\infw{x}}(n) + 
 \rho_{\infw{x}}(n+1)-\rho_{\infw{x}}(n)+2)
 \]
 for all $n\ge 0$.
 The desired inequalities follow.
\end{proof}

\begin{proposition}
 Let $\infw{x}$ be a reversal-closed sequence.
 Then we have $r_{\infw{x}}(n+1)+r_{\infw{x}}(n) 
 \leq \rho_{\infw{x}}(n+1)+1$ for all $n\ge 0$.
\end{proposition}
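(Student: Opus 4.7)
The plan is to show that this inequality is a direct consequence of two results already proved earlier in the paper, namely Theorem~\ref{thm:r-and-rho}(b) and Theorem~\ref{thm:Pal-first-diff-rho}(b). First I would use the fact that $\infw{x}$ is reversal-closed to invoke Theorem~\ref{thm:r-and-rho}(b), which gives the identity
\[
r_{\infw{x}}(n) = \tfrac{1}{2}\bigl(\rho_{\infw{x}}(n) + \Pal_{\infw{x}}(n)\bigr)
\]
for every $n \geq 0$.

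Next I would add the corresponding expressions for $r_{\infw{x}}(n)$ and $r_{\infw{x}}(n+1)$ to obtain
\[
r_{\infw{x}}(n+1) + r_{\infw{x}}(n) = \tfrac{1}{2}\bigl(\rho_{\infw{x}}(n+1) + \rho_{\infw{x}}(n) + \Pal_{\infw{x}}(n+1) + \Pal_{\infw{x}}(n)\bigr).
\]
The desired inequality $r_{\infw{x}}(n+1) + r_{\infw{x}}(n) \leq \rho_{\infw{x}}(n+1) + 1$ is then equivalent, after multiplying by $2$ and rearranging, to
\[
\Pal_{\infw{x}}(n+1) + \Pal_{\infw{x}}(n) \leq \rho_{\infw{x}}(n+1) - \rho_{\infw{x}}(n) + 2,
\]
which is precisely the statement of Theorem~\ref{thm:Pal-first-diff-rho}(b) (applied to the reversal-closed sequence $\infw{x}$, noting per Remark~\ref{rem:folklore-reversal-recurrent} that $\infw{x}$ is recurrent, so the hypotheses of that theorem are satisfied).

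There is no real obstacle here: the algebra is routine once the two cited results are in hand, and the only thing to verify is that Theorem~\ref{thm:r-and-rho}(b) indeed yields an \emph{equality} (not just an inequality) when $\infw{x}$ is reversal-closed, as explicitly stated in its statement. The proof amounts to substituting and simplifying.
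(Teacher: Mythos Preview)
Your proposal is correct and matches the paper's own proof essentially verbatim: the paper simply says it suffices to combine Remark~\ref{rem:folklore-reversal-recurrent} with Theorems~\ref{thm:Pal-first-diff-rho}(b) and~\ref{thm:r-and-rho}(b), which is exactly the substitution-and-rearrangement you describe.
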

\begin{proof}
It is enough to combine Theorems~\ref{thm:Pal-first-diff-rho}(b) and~\ref{thm:r-and-rho}(b).
\end{proof}

On the other hand, we can use a result of
\cite{AlloucheBaakeCassaigneDamanik2003} to 
obtain the following theorem.
\begin{theorem}\label{thm:upper-bound-abcd}
 Let $\infw{x}$ be a non-eventually periodic and reversal-closed sequence.
 For all $n\ge 1$, we have
 \[
 \frac{1}{2} \rho_{\infw{x}}(n) \leq
 r_{\infw{x}}(n) < \frac{1}{2} \rho_{\infw{x}}(n) + \frac{8}{n}
\rho_{\infw{x}}\left( n + \left\lfloor\frac{n}{4}\right\rfloor \right).
 \]
\end{theorem}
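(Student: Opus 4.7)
The plan is to reduce the statement to a known bound on palindromic complexity. Since $\infw{x}$ is reversal-closed, Theorem~\ref{thm:r-and-rho}(b) gives the exact identity
\[
r_{\infw{x}}(n) \;=\; \tfrac{1}{2}\bigl(\rho_{\infw{x}}(n) + \Pal_{\infw{x}}(n)\bigr)
\]
for every $n \ge 0$. The left inequality $\tfrac{1}{2}\rho_{\infw{x}}(n) \le r_{\infw{x}}(n)$ then reads $0 \le \Pal_{\infw{x}}(n)$, which is trivial. So the content of the theorem is entirely concentrated in the right inequality, which after substituting the identity above becomes the equivalent estimate
\[
\Pal_{\infw{x}}(n) \;<\; \frac{16}{n}\,\rho_{\infw{x}}\!\left(n + \left\lfloor \tfrac{n}{4} \right\rfloor\right).
\]

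The second step is to cite the already-referenced result of Allouche, Baake, Cassaigne and Damanik~\cite{AlloucheBaakeCassaigneDamanik2003}, which establishes exactly the inequality above for every non-eventually periodic sequence $\infw{x}$ and every $n \ge 1$. The hypotheses of that theorem are met: $\infw{x}$ is assumed to be non-eventually periodic, and no further assumption on the alphabet or on reversal-closedness is needed for the palindromic upper bound (reversal-closedness is only used in the first step, to convert between $r_{\infw{x}}$ and $\Pal_{\infw{x}}$). Plugging this bound back into the identity yields
\[
r_{\infw{x}}(n) \;=\; \tfrac{1}{2}\rho_{\infw{x}}(n) + \tfrac{1}{2}\Pal_{\infw{x}}(n) \;<\; \tfrac{1}{2}\rho_{\infw{x}}(n) + \frac{8}{n}\,\rho_{\infw{x}}\!\left(n + \left\lfloor \tfrac{n}{4} \right\rfloor\right),
\]
which is the claimed inequality.

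In short, there is no genuine obstacle: the theorem is a one-line consequence of Theorem~\ref{thm:r-and-rho}(b) combined with the palindromic complexity bound of~\cite{AlloucheBaakeCassaigneDamanik2003}. The only subtlety to check is that the ABCD bound is available under the hypotheses we have, namely aperiodicity, and that the strict inequality sign in their statement is preserved (which is immediate, since multiplying by $\tfrac{1}{2}$ preserves strictness). No separate argument is needed for small $n$ because the $\tfrac{1}{2}\rho_{\infw{x}}(n)$ term already dominates when $\Pal_{\infw{x}}(n)$ is small.
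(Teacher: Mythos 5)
Your proposal is correct and follows essentially the same route as the paper: the authors likewise invoke Theorem~\ref{thm:r-and-rho} (the reversal-closed identity $r_{\infw{x}}(n) = \frac{1}{2}(\rho_{\infw{x}}(n) + \Pal_{\infw{x}}(n))$ together with the lower bound) and the palindromic complexity estimate $\Pal_{\infw{x}}(n) < \frac{16}{n}\rho_{\infw{x}}(n + \lfloor n/4 \rfloor)$ from Theorem~12 of \cite{AlloucheBaakeCassaigneDamanik2003} for non-eventually periodic sequences. Nothing further is needed.
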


\begin{proof}
 Given a non-eventually periodic sequence $\infw{x}$, we have from \cite[Theorem~12]{AlloucheBaakeCassaigneDamanik2003} the 
 inequality
 \[
 \Pal_{\infw{x}}(n) < \frac{16}{n} \rho_{\infw{x}}\left( n + \left\lfloor\frac{n}{4}\right\rfloor \right)
 \]
 for all $n\ge 1$.
 The statement follows from this inequality and Theorem~\ref{thm:r-and-rho}.
\end{proof}

\begin{corollary}\label{cor:growth}
Let $\infw{x}$ be a non-eventually periodic and reversal-closed sequence.
If its factor complexity satisfies
$\rho_{\infw{x}}(n+1) \sim \rho_{\infw{x}}(n)$
or
$\frac{\rho_{\infw{x}}(2n)}{\rho_{\infw{x}}(n)} = o(n)$, then 
\[
r_{\infw{x}}(n) \sim \frac{1}{2}\rho_{\infw{x}}(n)
\] when $n$ tends to infinity.
In particular, this equivalence holds if $\infw{x}$ is non-eventually periodic,
reversal-closed, and morphic.
\end{corollary}
\begin{proof}
Let $\infw{x}$ be a non-eventually periodic and reversal-closed sequence.
If $\rho_{\infw{x}}(n+1) \sim \rho_{\infw{x}}(n)$,
then, from Theorem~\ref{thm:upper-bound-bmp},
we obtain that 
$r_{\infw{x}}(n) \sim \frac{1}{2}\rho_{\infw{x}}(n)$ when $n$ tends to infinity.
Now, if $\frac{\rho_{\infw{x}}(2n)}{\rho_{\infw{x}}(n)} = o(n)$, we obtain, from
Theorem~\ref{thm:upper-bound-abcd}, and using the fact that $\rho_{\infw{x}}$ is non-decreasing, that
\[
\frac{1}{2}\rho_{\infw{x}}(n) \leq r_{\infw{x}}(n) 
< \frac{1}{2}\rho_{\infw{x}}(n) + \frac{8}{n} 
\rho_{\infw{x}}(2n) = \frac{1}{2}\rho_{\infw{x}}(n) +o(\rho_{\infw{x}}(n)),
\]
which is enough.

Now suppose that, in addition, the sequence $\infw{x}$ is morphic. We know that either $\rho_{\infw{x}}(n) = 
\Theta(n^2)$ or $\rho_{\infw{x}}(n) = O(n^{3/2})$
(see~\cite{Devyatov2018,Deviatov2008}).
In the first case, then 
$\frac{\rho_{\infw{x}}(2n)}{\rho_{\infw{x}}(n)}$ is
bounded, and hence $o(n)$.
If $\rho_{\infw{x}}(n) = O(n^{3/2})$, since 
${\infw{x}}$ is not eventually periodic (hence 
$\rho_{\infw{x}}(n) \geq n+1$), there exists a constant
$C > 0$ such that
\[
\frac{\rho_{\infw{x}}(2n)}{\rho_{\infw{x}}(n)} 
\leq C\frac{n^{3/2}}{n+1} = o(n).
\]
This finishes the proof.
\end{proof}

 The upper bound in 
Theorem~\ref{thm:upper-bound-abcd} raises questions about the growth properties of the function 
$r_{\infw{x}}$ more generally, apart from the 
case where the set of factors of $\infw{x}$
satisfies the hypotheses of 
Theorem~\ref{thm:upper-bound-abcd}. This leads 
us to the growth property in
Theorem~\ref{thm:paritygrowth} below. 

\begin{theorem}\label{thm:paritygrowth}
Let $\infw{x}$ be a sequence. Then $r_{\infw{x}} (n) \leq r_{\infw{x}} (n+2)$ for all $n \geq 0$.
\end{theorem}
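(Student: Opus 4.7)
The plan is to define a natural map $\tilde\sigma$ from length-$(n+2)$ reflection classes to length-$n$ reflection classes by extracting the inner length-$n$ factor, and then handle the potential failure of surjectivity by a shifting argument.

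Given a length-$(n+2)$ factor $v$ of $\infw{x}$, set $\sigma(v) := v[2..n+1]$. Since $(v^R)[2..n+1] = (v[2..n+1])^R$, the map $\sigma$ descends to a well-defined map $\tilde\sigma$ from length-$(n+2)$ reflection classes (of cardinality $r_{\infw{x}}(n+2)$) to length-$n$ reflection classes (of cardinality $r_{\infw{x}}(n)$). The image of $\tilde\sigma$ consists exactly of those classes $[u]$ such that $u$ or $u^R$ occurs in $\infw{x}$ at some position $\geq 2$. If a class $[u]$ is missed, then $u$ can occur only at position~$1$, forcing $u = p_n := \infw{x}[1..n]$; likewise $u^R$ either equals $p_n$ or is not a factor of $\infw{x}$. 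So $\tilde\sigma$ can miss at most the single class $[p_n]$.

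If $\tilde\sigma$ is surjective, then $r_{\infw{x}}(n+2) \geq r_{\infw{x}}(n)$. Otherwise I pass to the shifted sequence $\infw{x}' := \infw{x}[2..\infty]$ and argue that $r(n+2) - r(n)$ is preserved under this shift. For length $n$ this is immediate: the factors $p_n$ and $p_n^R$ do not reappear past position~$1$, so the class $[p_n]$ is simply deleted without affecting any other reflection class. For length $n+2$, I must show $p_{n+2} := \infw{x}[1..n+2]$ has $p_{n+2}^R$ not a factor of $\infw{x}$ at all, for otherwise removing $p_{n+2}$ when shifting could merge classes. Since $p_n^R$ is the length-$n$ suffix of $p_{n+2}^R$, any occurrence of $p_{n+2}^R$ at a position $i \geq 1$ would place $p_n^R$ at position $i+2 \geq 3$, contradicting the hypothesis that $p_n^R$ does not occur at any position~$\geq 2$. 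Consequently $[p_{n+2}]$ is an unreflected class removed cleanly, and $r_{\infw{x}}(n+2) - r_{\infw{x}}(n) = r_{\infw{x}'}(n+2) - r_{\infw{x}'}(n)$.

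To finish, I iterate: if $\infw{x}'$ again misses its length-$n$ prefix class, I shift once more. Termination follows from a pigeonhole argument — if the missed case persisted at every shift $\infw{x}^{(k)} = \infw{x}[k+1..\infty]$, then the length-$n$ factors $\infw{x}[k+1..k+n]$ for $k = 0, 1, 2, \ldots$ would all have to be pairwise distinct, which is impossible over a finite alphabet. Hence some shift lands in the surjective case, giving the inequality there, which transfers back to $\infw{x}$ via the step-by-step invariance of $r(n+2) - r(n)$. The step I expect to be the main obstacle is precisely the verification that $p_{n+2}^R$ is not a factor of $\infw{x}$ in the non-surjective case; without this, the class $[p_{n+2}]$ could merge with the class of its reverse in $\infw{x}'$, breaking the clean decrement of $r(n+2)$ by exactly one.
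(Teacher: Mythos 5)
Your proof is correct, and it takes a genuinely different route from the paper's. The paper handles the troublesome prefix by prepending a fresh letter $c$ to form $\infw{y}=c\infw{x}$ (which raises the reflection complexity by exactly $1$ at every positive length and turns the prefix into an unreflected ``exceptional'' factor), and then argues from length $n$ up to length $n+2$: each length-$n$ factor $w$ of $\infw{y}$ is assigned the set $S_w$ of its two-sided extensions $awb$, a weight $[w]=[w]_1+[w]_2+[w]_3/2$ encoding the palindrome/unreflected/reflected trichotomy satisfies $r_{\infw{y}}(m)=\sum_{|w|=m}[w]$, and a case analysis shows $[w]\le[S_w]$. You instead go from length $n+2$ down to length $n$: the inner-factor map $v\mapsto v[2..n+1]$ descends to reflection classes because $(v^R)[2..n+1]=(v[2..n+1])^R$, its image misses at most the class of the length-$n$ prefix $p_n$, and surjectivity immediately gives the inequality. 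You repair the possible failure of surjectivity by deleting the first letter and checking that $r(n)$ and $r(n+2)$ each drop by exactly one; the key point, which you correctly isolate and verify, is that $p_{n+2}^R$ cannot occur anywhere in $\infw{x}$, since an occurrence at position $i\ge 1$ would force $p_n^R$ to occur at position $i+2\ge 3$, contradicting the non-surjectivity hypothesis. Your pigeonhole termination argument is also sound (the successive length-$n$ prefixes must be pairwise distinct); a marginally quicker alternative is to note that each shift decreases $r_{\infw{x}}(n)$ by one while $r_{\infw{x}}(n)\ge 1$, so at most $r_{\infw{x}}(n)-1$ shifts can occur. Both proofs confront the same obstruction---the prefix cannot be extended to the left---but your restriction map is automatically compatible with $\sim_r$ and avoids the paper's weighted case analysis, at the cost of the shift-invariance bookkeeping.
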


\begin{proof}
The result is clear for $n = 0$, so assume $n > 0$ in what follows.
Let $c$ be a letter not in the alphabet of $\infw{x}$, and define
${\infw{y}} = c {\infw{x}}$. Then $r_{\infw{y}} (n) = r_{\infw{x}} (n) + 1$
for all $n >0$, since $\infw{y}$ has exactly one additional factor for each length $n \geq 1$; namely, the prefix of length $n$. Thus, it suffices to prove the claim for $\infw{y}$
instead of $\infw{x}$.

With each length-$n$ factor $w$ of $\infw{y}$ associate a set $S_w$ of length-$(n+2)$
factors of $\infw{y}$, as follows: if $w$ is the length-$n$ prefix of $\infw{y}$, then $S_w := \{ w' \}$, where $w'$ is the prefix of length $n+2$ of $\infw{y}$.
We call such a factor {\it exceptional}.
Otherwise, define $S_w := \{ z \in \Fac({\infw{y}}) \suchthat z = awb
 \text{ for some letters $a,b$} \}$. Note that the sets $S_w$, over all length-$n$ factors of $\infw{y}$, are pairwise disjoint, and cover all the length-$(n+2)$ factors of $\infw{y}$.

 For a factor $w$ of $\infw{y}$, define $[w]_1 = 1$ if $w$ is a palindrome, and $0$ otherwise.
Similarly, $[w]_2 = 1$ if $w^R$ is not a factor of $\infw{y}$ and $0$ otherwise. 
Finally, define $[w]_3 = 1$ if $w^R$ is also a factor of $\infw{y}$ but
$w$ is not a palindrome, and
$0$ otherwise. Notice that these three cases are disjoint and subsume all possibilities for factors of $\infw{y}$ (also recall the decomposition at the beginning of the section).
We extend this notation to sets by defining
$[S]_i = \sum_{w \in S} [w]_i$ for $i\in\{1,2,3\}$.
Define $[w]= [w]_1 + [w]_2 + [w]_3/2$ and similarly for
$[S]$.
From Equalities~\eqref{eq:combinationURP}, we know that
$$r_{\infw{y}}(n) = \sum_{\substack{|w|=n \\ w \in \Fac({\infw{y}})}} [w] $$
while 
$$ 
r_{\infw{y}}(n+2) = 
\sum_{\substack{|w|=n \\ w \in \Fac({\infw{y}})}} [S_w].
$$
Therefore, to show the desired inequality
$r_{\infw{y}} (n) \leq r_{\infw{y}}(n+2)$, it suffices to show that
$[w] \leq [S_w]$ for all length-$n$ factors $w$ of $\infw{y}$.

Suppose $w$ is exceptional. Recall that $w$ starts with $c$,
which appears nowhere else in $\infw{y}$. Then $[w]_1 = [w]_3 = 0$, but $[w]_2 = 1$.
And $S_w = \{ wab \}$, so $[S_w]_1 = [S_w]_3 = 0$, but $[S_w]_2 = 1$.
Therefore $[w]\leq [S_w]$.

Now suppose $w$ is not exceptional. There are three cases to consider, noting that when $n = 1$, we have that $[w]_{1} = 1$ and $[w]_{2} = 0$. 

\bigskip 

Case 1: If $[w]_1 = 1$, then $w$ is a palindrome.
Consider a factor $awb \in S_w$. If it is 
a palindrome, then $[awb]_1 = 1$, so $[w]\leq [awb]$. If $awb$
is not a palindrome, then $awb \not= (awb)^R = b w^R a = bwa $.
Thus $a\not=b$. If $bwa$ is not a factor of $\infw{y}$,
then $[awb]_2 = 1$, so $[w] \leq [awb]$. If $bwa$ is a factor
of $\infw{y}$, then $bwa\in S_w$ and $[awb]_3 + [bwa]_3 = 2$, so in all cases $[w] \leq [awb]$.
Thus $[w] \leq [S_w]$.

\bigskip

Case 2: If $[w]_2 = 1$, then $w^R$ is not a factor of $\infw{y}$. Consider
a factor $awb \in S_w$. Then $(awb)^R = b w^R a$, so $(awb)^R$
cannot be a factor of $y$ either. Hence $[awb]_2=1$, $[w]\leq [awb]$, and
hence $[w]\leq [S_w] $.

\bigskip

Case 3: If $[w]_3 = 1$, then $w^R$ is a factor of $\infw{y}$, but $w$ is not a 
palindrome. Consider a factor $awb \in S_w$. If $awb$ is a palindrome,
then $awb = (awb)^R = b w^R a$, so $w^R$ would be a palindrome, a contradiction.
So $awb$ is not a palindrome and $[awb]_1=0$.
If $(awb)^R = b w^R a$ is a factor of $\infw{y}$,
 then $[awb]_3 = 1$, so $[w]\leq [awb] $. 
If $(awb)^R$ is not a factor of $\infw{y}$, then 
$[awb]_2 = 1$, so $[w] <[awb]$. Thus $[w]\leq [S_w]$. 

\medskip

This completes the proof.
\end{proof}

\begin{remark}
Another formulation of Theorem~\ref{thm:paritygrowth}
above is that the sequence 
$(r_{\infw{x}}(n+1) + r_{\infw{x}}(n))_{n \geq 0}$ 
is non-decreasing.
\end{remark}

 With regard to the conjectures listed at the end of this section, we can prove a weaker form of Conjecture 
 \ref{conj:growthconj3} for reversal-closed sequences, and a weaker form of Conjecture \ref{conj:growthconj1} 
 for sequences without 
 long palindromes. Also we can prove that 
Conjecture~\ref{conj:growthconj4} holds for primitive
morphic sequence.

\begin{theorem}\label{thm:weak-conj:growthconj1}
Let $n_0\ge 0$ be an integer and let $\infw{x}$ be a sequence with no palindrome of
length $\geq n_0$. Then $(r_{\infw{x}}(n))_{n \geq 0}$
is eventually non-decreasing: 
$r_{\infw{x}}(n) \leq r_{\infw{x}}(n+1)$ for $n \geq n_0$.
Furthermore, if $r_{\infw{x}}(n+2) = r_{\infw{x}}(n)$ 
for some $n \geq n_0$, then the sequence 
$\infw{x}$ is eventually periodic.
\end{theorem}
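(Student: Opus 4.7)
The plan is to exploit the palindrome-free hypothesis to reduce $r_{\infw{x}}$ to a simple combination of $\rho_{\infw{x}}$ and $\Unr_{\infw{x}}$, and then to argue that both of these are non-decreasing. When $\Pal_{\infw{x}}(n) = 0$, which holds for every $n \geq n_0$ by hypothesis, Equalities~\eqref{eq:combinationURP} collapse to
\[
r_{\infw{x}}(n) = \Unr_{\infw{x}}(n) + \frac{1}{2}\Refl_{\infw{x}}(n) = \frac{1}{2}\bigl(\rho_{\infw{x}}(n) + \Unr_{\infw{x}}(n)\bigr).
\]

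The key technical step is to show that $\Unr_{\infw{x}}$ is non-decreasing in $n$ (and this, notably, needs no palindrome hypothesis at all). To each unreflected length-$n$ factor $w$, I would associate an arbitrary right extension $wa$ occurring in $\infw{x}$, which exists since $\infw{x}$ is infinite. The resulting map is injective, as $w$ is the length-$n$ prefix of $wa$. Moreover, its image lies in the unreflected length-$(n+1)$ factors: if $w^R$ is not a factor of $\infw{x}$, then $(wa)^R = aw^R$ is not a factor either, for otherwise its suffix $w^R$ would be a factor as well. Combined with the standard monotonicity of $\rho_{\infw{x}}$, the displayed identity yields $r_{\infw{x}}(n) \leq r_{\infw{x}}(n+1)$ for every $n \geq n_0$.

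For the second claim, assume $r_{\infw{x}}(n+2) = r_{\infw{x}}(n)$ for some $n \geq n_0$. The monotonicity just proven sandwiches $r_{\infw{x}}(n+1)$ between two equal values, forcing $r_{\infw{x}}(n) = r_{\infw{x}}(n+1)$, and the displayed identity then gives $\Unr_{\infw{x}}(n) + \rho_{\infw{x}}(n) = \Unr_{\infw{x}}(n+1) + \rho_{\infw{x}}(n+1)$. Since each summand is individually non-decreasing, both must agree at $n$ and $n+1$, so in particular $\rho_{\infw{x}}(n) = \rho_{\infw{x}}(n+1)$. Theorem~\ref{thm:morse-hedlund} (Morse--Hedlund) then concludes that $\infw{x}$ is eventually periodic. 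The whole argument is quite clean; the only step requiring a little care is verifying that extending an unreflected factor preserves unreflectedness, but this reduces to the one-line suffix observation above.
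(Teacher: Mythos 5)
Your proposal is correct and follows essentially the same route as the paper: with $\Pal_{\infw{x}}(n)=0$ for $n\ge n_0$, reduce to $r_{\infw{x}}(n)=\frac{1}{2}\bigl(\rho_{\infw{x}}(n)+\Unr_{\infw{x}}(n)\bigr)$, use monotonicity of $\rho_{\infw{x}}$ and $\Unr_{\infw{x}}$ for the first claim, and derive $\rho_{\infw{x}}(n)=\rho_{\infw{x}}(n+1)$ plus Theorem~\ref{thm:morse-hedlund} for the second. The only (welcome) addition is that you spell out the right-extension argument showing $\Unr_{\infw{x}}$ is non-decreasing, a fact the paper takes as evident.
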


\begin{proof}
By combining the assumption and the second equality of Lemma~\ref{lem:unr-ref-pal}, we have that
\begin{equation}
\label{eq:equality big n}
 r_{\infw{x}}(n) = \dfrac{1}{2}(\rho_{\infw{x}}(n) + \Unr_{\infw{x}}(n))
\end{equation}
for $n \geq n_0$. Since both 
$(\rho_{\infw{x}}(n))_{n\ge 0}$ and 
$(\Unr_{\infw{x}}(n))_{n\ge 0}$ are 
non-decreasing, we see that $(r_{\infw{x}}(n))_{n \geq n_0}$ is
non-decreasing, which gives that 
$r_{\infw{x}}(n) \leq r_{\infw{x}}(n+1)$ for $n \geq n_0$.
This shows the first part of the statement. 
For the second part, if we have
$r_{\infw{x}}(n+2) = r_{\infw{x}}(n)$ for some 
$n \geq n_0$, then Equality~\eqref{eq:equality big n} implies that $\rho_{\infw{x}}(n+2) + \Unr_{\infw{x}}(n+2) = \rho_{\infw{x}}(n) + \Unr_{\infw{x}}(n)$. Hence
\begin{equation}\label{eq:eventual equality}
 \rho_{\infw{x}}(n+2) + \Unr_{\infw{x}}(n+2) = \rho_{\infw{x}}(n+1) + \Unr_{\infw{x}}(n+1) =
\rho_{\infw{x}}(n) + \Unr_{\infw{x}}(n). 
\end{equation}
Hence $\rho_{\infw{x}}(n+1) = \rho_{\infw{x}}(n)$, 
which implies that $\infw{x}$ is eventually
periodic from Theorem~\ref{thm:morse-hedlund}.
\end{proof}

 Numerical experiments concerning the growth of the reflection complexity have led us to formulate Conjectures 
 \ref{conj:growthconj1}--\ref{conj:growthconj4} below. Actually, Theorems~\ref{thm:uniformly-recurrent} and 
 \ref{thm:weak-conj:growthconj1} can be used to show that 
 Conjecture \ref{conj:growthconj3}
 holds if $\infw{x}$ is uniformly recurrent (also see 
 Theorem \ref{thm:growthconj2} below). 
 In the same vein, Theorem \ref{thm:uniformly-recurrent}
 and Corollary~\ref{cor:growth} imply the following corollary.

\begin{corollary}\label{cor:weak-growthconj4}
Conjecture~\ref{conj:growthconj4} holds for 
non-eventually periodic primitive morphic sequences.
\end{corollary}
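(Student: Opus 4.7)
The plan is to use the dichotomy of Theorem~\ref{thm:uniformly-recurrent} together with Corollary~\ref{cor:growth}, leveraging the well-known fact that a primitive morphic sequence is uniformly recurrent. So first I would recall (or simply cite) that primitivity of the underlying morphism forces the sequence $\infw{x}$ to be uniformly recurrent, which puts us in a position to apply Theorem~\ref{thm:uniformly-recurrent}.

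Next I would split into the two cases given by that theorem. In case (a), the sequence $\infw{x}$ is reversal-closed. Since $\infw{x}$ is also non-eventually periodic and morphic (it is primitive morphic, hence in particular morphic), Corollary~\ref{cor:growth} directly gives
\[
r_{\infw{x}}(n) \sim \tfrac{1}{2}\rho_{\infw{x}}(n)
\]
as $n\to\infty$, so $\lim_{n\to\infty} r_{\infw{x}}(n)/\rho_{\infw{x}}(n) = \tfrac{1}{2}$. In case (b), Theorem~\ref{thm:uniformly-recurrent} provides an $n_0$ such that $\rho_{\infw{x}}(n) = \Unr_{\infw{x}}(n)$ for all $n\geq n_0$, which forces $r_{\infw{x}}(n) = \rho_{\infw{x}}(n)$ for all $n\geq n_0$; hence $\lim_{n\to\infty} r_{\infw{x}}(n)/\rho_{\infw{x}}(n) = 1$.

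In both cases the limit exists and is either $\tfrac{1}{2}$ or $1$, which is in fact stronger than what Conjecture~\ref{conj:growthconj4} asks (it assumes the limit exists as a hypothesis). There is no real obstacle to this argument once the two cited results are in hand; the only point that deserves a line of justification is the implication ``primitive morphic $\Rightarrow$ uniformly recurrent,'' which is standard and can be referenced, e.g., from~\cite{AlloucheShallit2003}. Thus the proof should be essentially a two-line deduction from Theorem~\ref{thm:uniformly-recurrent} and Corollary~\ref{cor:growth}.
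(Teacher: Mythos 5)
Your proposal is correct and follows essentially the same route as the paper: both proofs invoke the fact that primitive morphic sequences are uniformly recurrent, apply the dichotomy of Theorem~\ref{thm:uniformly-recurrent}, and conclude via Corollary~\ref{cor:growth} (limit $\tfrac{1}{2}$ in the reversal-closed case) or via $r_{\infw{x}}(n) = \rho_{\infw{x}}(n)$ for large $n$ (limit $1$) in the other case. Your added remark that the conclusion is actually stronger than Conjecture~\ref{conj:growthconj4} (the limit is shown to exist, not merely assumed) is a fair observation but does not change the argument.
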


\begin{proof}
 Let $\infw{x}$ be a primitive morphic sequence. We know that $\infw{x}$ is uniformly recurrent. Thus, from 
 Theorem~\ref{thm:uniformly-recurrent}, either $\infw{x}$ is reversal-closed, or else it has 
 no long palindromes. If $\infw{x}$ is reversal-closed,
then by Corollary~\ref{cor:growth}, we have 
$r_{\infw{x}}(n) \sim \frac{1}{2} \rho_{\infw{x}}(n)$.
Otherwise, $\infw{x}$ has no long palindromes, then,
still from Theorem~\ref{thm:uniformly-recurrent}, we
have that $r_{\infw{x}}(n) = \rho_{\infw{x}}(n)$
for $n$ large enough.
\end{proof}

 We conclude this section by leaving the following conjectures as open problems. 

\begin{conjecture}\label{conj:growthconj1}
Let $\infw{x}$ be a sequence. Then
$r_{\infw{x}}(n) = r_{\infw{x}}(n+2)$ for some $n$ 
if and only if $\infw{x}$ is eventually periodic. 
\end{conjecture}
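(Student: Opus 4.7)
For ($\Leftarrow$), if $\infw{x}$ is eventually periodic then $\rho_{\infw{x}}$ is bounded by Morse--Hedlund (Theorem~\ref{thm:morse-hedlund}), so $r_{\infw{x}}\le\rho_{\infw{x}}$ is bounded as well. Theorem~\ref{thm:paritygrowth} makes $(r_{\infw{x}}(2k))_{k\ge 0}$ and $(r_{\infw{x}}(2k+1))_{k\ge 0}$ non-decreasing, hence eventually constant, producing an $n$ with $r_{\infw{x}}(n)=r_{\infw{x}}(n+2)$.

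For the non-trivial direction ($\Rightarrow$), I would push the proof of Theorem~\ref{thm:paritygrowth} into its equality case. Form $\infw{y}=c\infw{x}$, so the hypothesis becomes $r_{\infw{y}}(n)=r_{\infw{y}}(n+2)$. Because that proof establishes $[w]\leq[S_w]$ for every length-$n$ factor $w$ of $\infw{y}$ with $\sum_w[w]=r_{\infw{y}}(n)$ and $\sum_w[S_w]=r_{\infw{y}}(n+2)$, the equality of sums forces $[w]=[S_w]$ factor-by-factor. Revisiting Cases~1--3 of that proof then rigidly describes each $S_w$: an unreflected or reflected non-palindrome factor $w$ has $|S_w|=1$, with the unique two-sided extension being of the same reflection type as $w$; a palindrome $w$ either has a unique palindromic extension $awa$, a unique unreflected extension $awb$ with $bwa\notin\Fac(\infw{y})$, or a unique reflected pair $\{awb,bwa\}$ of non-palindromes, and only this last subcase---the \emph{bispecial palindromes}---has $|S_w|=2$.

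Two matching counts now fall out. Only bispecial palindromes are right-special, each supplying exactly one extra right-extension, so
\[
s_{\infw{y}}(n) := \rho_{\infw{y}}(n+1) - \rho_{\infw{y}}(n) = |\{\text{bispecial palindromes of length }n\}|.
\]
Meanwhile the partition by $S_w$ yields $\rho_{\infw{y}}(n+2) - \rho_{\infw{y}}(n) = \sum_w(|S_w|-1)$, and again only the bispecial palindromes contribute. Subtracting gives $s_{\infw{y}}(n+1)=0$, i.e., $\rho_{\infw{y}}(n+1)=\rho_{\infw{y}}(n+2)$; Morse--Hedlund (Theorem~\ref{thm:morse-hedlund}) then declares $\infw{y}$---and hence $\infw{x}$---eventually periodic.

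The step I expect to demand the most care is guaranteeing the per-factor equality $[w]=[S_w]$ really holds for \emph{every} length-$n$ factor. The partition used in Theorem~\ref{thm:paritygrowth} reassigns the length-$(n+2)$ prefix $w'$ of $\infw{y}$ to the exceptional set, so when the length-$n$ prefix of $\infw{x}$ itself fails to recur, the factor $w'[2..n+1]$ has an artificially small $S_{w'[2..n+1]}$; one must verify that any such local deficit is balanced by a compensating surplus at some palindrome whose extensions occupy more than one reflection class, so that both counts above remain valid and the conclusion $s_{\infw{y}}(n+1)=0$ is preserved.
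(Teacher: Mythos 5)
This statement is Conjecture~\ref{conj:growthconj1}, which the paper explicitly leaves open: only the direction ``eventually periodic $\Rightarrow$ $r_{\infw{x}}(n)=r_{\infw{x}}(n+2)$ for some (indeed all large) $n$'' is established there, via Theorem~\ref{thm:periodicity}. Your ($\Leftarrow$) argument is exactly that proof. For ($\Rightarrow$) there is therefore no paper proof to match; what you propose is an attempt at the open direction, and while the idea of extracting the equality case of Theorem~\ref{thm:paritygrowth} is natural and your structural analysis of the rigid factors (non-palindromes forced to have $|S_w|=1$ of the same reflection type; palindromes forced to have either a unique extension or a reflected pair $\{awb,bwa\}$) is correct as far as it goes, the argument has a genuine unresolved gap, and it is precisely the one you flag in your last paragraph.

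The gap sits already in the foundation you lean on. With the sets $S_w$ as literally defined in the proof of Theorem~\ref{thm:paritygrowth}, they are \emph{not} pairwise disjoint: the length-$(n+2)$ prefix $c\,x_1\cdots x_{n+1}$ of $\infw{y}=c\infw{x}$ belongs both to $S_{P_n}$ (the exceptional set) and to $S_{w_0}$ for $w_0=x_1\cdots x_n$, so $\sum_w [S_w]$ overcounts $r_{\infw{y}}(n+2)$. Restoring disjointness by removing the prefix from $S_{w_0}$ makes $S_{w_0}$ possibly empty (exactly when $x_1\cdots x_n$ occurs only at position $1$ of $\infw{x}$), in which case $[S_{w_0}]<[w_0]$ and the per-factor inequality $[w]\le [S_w]$ fails at $w_0$. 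Consequently the equality of sums $r_{\infw{y}}(n)=r_{\infw{y}}(n+2)$ no longer forces $[w]=[S_w]$ for every $w$: a deficit of up to $1$ at $w_0$ can be balanced by one or two factors $w'$ with $[S_{w'}]>[w']$, and for such $w'$ nothing prevents $|S_{w'}|$ from exceeding the number of right extensions of $w'$ (a right extension of $w'$ may be left-special). Your two counts then only yield $\rho_{\infw{y}}(n+2)\le \rho_{\infw{y}}(n+1)+O(1)$ rather than $\rho_{\infw{y}}(n+1)=\rho_{\infw{y}}(n+2)$, and Theorem~\ref{thm:morse-hedlund} gives nothing from that. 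The ``compensating surplus'' you invoke at the end is not a verification but a restatement of what remains to be proved; as written, the forward direction is not established.

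That said, the obstruction disappears when $\infw{x}$ is recurrent: then every length-$(n+1)$ factor of $\infw{y}$ other than the $c$-prefix left-extends inside a non-exceptional $S_w$, all sets $S_w$ are nonempty and disjoint, term-wise equality $[w]=[S_w]$ does follow, and your comparison of $\sum_w |S_w|$ with $\sum_w(\text{number of right extensions of }w)$ then forces $\rho_{\infw{y}}(n+1)=\rho_{\infw{y}}(n+2)$, hence eventual periodicity. So your method appears to settle the conjecture for recurrent sequences, which would go beyond the partial results the paper records (Theorem~\ref{thm:weak-conj:growthconj1} handles only sequences without long palindromes). I would encourage you to write that restricted case up carefully --- including a corrected, disjoint definition of the $S_w$ --- and to present the non-recurrent case honestly as open.
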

Note that one direction is true. We have even more: 
namely, if the sequence $\infw{x}$ is eventually 
periodic, then $r_{\infw{x}}(n) = r_{\infw{x}}(n+2)$, 
for all $n$ large enough 
(see Theorem~\ref{thm:periodicity} below).

\begin{conjecture}\label{conj:growthconj3}
Let $\infw{x}$ be a sequence 
of at most linear factor complexity. 
Then $r_{\infw{x}}(n+1)-r_{\infw{x}}(n)$ is bounded for all $n\ge 0$. 
Hence, in particular, if $\infw{x}$
is (generalized) automatic, 
so is $(r_{\infw{x}}(n+1)-r_{\infw{x}}(n))_{n\ge 0}$.
\end{conjecture}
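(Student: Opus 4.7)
The plan is to decompose $r_{\infw{x}}(n+1)-r_{\infw{x}}(n)$ into three manageable pieces and bound each separately. Starting from Lemma~\ref{lem:unr-ref-pal} and eliminating $\Refl_{\infw{x}}$, I obtain
\[
2\bigl(r_{\infw{x}}(n+1)-r_{\infw{x}}(n)\bigr) = \bigl(\rho_{\infw{x}}(n+1)-\rho_{\infw{x}}(n)\bigr) + \bigl(\Unr_{\infw{x}}(n+1)-\Unr_{\infw{x}}(n)\bigr) + \bigl(\Pal_{\infw{x}}(n+1)-\Pal_{\infw{x}}(n)\bigr),
\]
so it suffices to show that each of the three differences on the right is bounded in absolute value.

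The first and third terms are handled by existing tools. Cassaigne's theorem~\cite{Cassaigne1997} says at most linear factor complexity is equivalent to $\rho_{\infw{x}}(n+1)-\rho_{\infw{x}}(n)=O(1)$. For the palindrome term, the inequality $\Pal_{\infw{x}}(n) < (16/n)\,\rho_{\infw{x}}(n+\lfloor n/4\rfloor)$ of \cite{AlloucheBaakeCassaigneDamanik2003} already used in the proof of Theorem~\ref{thm:upper-bound-abcd} forces $\Pal_{\infw{x}}(n)=O(1)$ for non-eventually-periodic $\infw{x}$, and hence its first difference is bounded; the eventually periodic case is trivial because then $\rho_{\infw{x}}$, and so $r_{\infw{x}}$, is bounded outright.

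The heart of the proof is controlling $\Unr_{\infw{x}}(n+1)-\Unr_{\infw{x}}(n)$. I would first observe that this difference is non-negative: if $|w|=n$ and $w^R \notin \Fac(\infw{x})$, then every right extension $wb\in\Fac(\infw{x})$ has $(wb)^R = bw^R$ with suffix $w^R\notin\Fac(\infw{x})$, so $wb$ is again unreflected. For the upper bound, I would introduce the reversal-closed hull
\[
\widetilde{L} \;=\; \Fac(\infw{x}) \cup \{w^R \suchthat w\in\Fac(\infw{x})\},
\]
which is factorial. Inclusion-exclusion on $A^n$ gives
\[
p_{\widetilde{L}}(n) \;=\; 2\rho_{\infw{x}}(n) - \Refl_{\infw{x}}(n) \;=\; \rho_{\infw{x}}(n) + \Unr_{\infw{x}}(n) \;\le\; 2\rho_{\infw{x}}(n),
\]
so $\widetilde{L}$ has at most linear complexity. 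A Cassaigne-type theorem applied to $\widetilde{L}$ would then show that $p_{\widetilde{L}}(n+1)-p_{\widetilde{L}}(n)=(\rho_{\infw{x}}(n+1)-\rho_{\infw{x}}(n))+(\Unr_{\infw{x}}(n+1)-\Unr_{\infw{x}}(n))$ is bounded, delivering the desired bound on the middle term.

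The principal obstacle lies in that last step: Cassaigne's theorem is stated for factor languages of infinite words, whereas $\widetilde{L}$ is factorial and reversal-closed but only \emph{nearly} biextendable, the sole obstructions being a length-$n$ prefix of $\infw{x}$ that occurs only at position $1$ and is unreflected (no left extension in $\widetilde{L}$) together with its reversal (no right extension), for a total of at most two violations per length. I expect that extracting the biextendable core of $\widetilde{L}$ (losing at most $O(1)$ words per length) and applying the standard adaptation of Cassaigne's argument to factorial biextendable languages will close the gap, although some care is required since iteratively removing non-biextendable words could in principle cascade. Finally, once the first difference $r_{\infw{x}}(n+1)-r_{\infw{x}}(n)$ is bounded, the ``in particular'' clause follows immediately: for $k$-automatic $\infw{x}$ the sequence $r_{\infw{x}}$ is $k$-regular by the results of Section~\ref{sec:automatic}, hence so is its first difference, and a bounded $k$-regular sequence is $k$-automatic.
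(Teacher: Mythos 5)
Your opening reduction is fine and matches what is easy: from Lemma~\ref{lem:unr-ref-pal} one indeed gets $2\bigl(r_{\infw{x}}(n+1)-r_{\infw{x}}(n)\bigr)=\Delta\rho_{\infw{x}}+\Delta\Unr_{\infw{x}}+\Delta\Pal_{\infw{x}}$, the term $\Delta\rho_{\infw{x}}$ is bounded by Cassaigne, $\Pal_{\infw{x}}(n)=O(1)$ follows from the Allouche--Baake--Cassaigne--Damanik bound in the aperiodic case, and $\Unr_{\infw{x}}$ is non-decreasing. But be aware that the statement you are proving is left as an \emph{open conjecture} in the paper (only the weaker forms in Theorem~\ref{thm:weak-conj:growthconj2-3} and Corollary~\ref{conj:2-3-satisfied} are established, and there the dichotomy of Theorem~\ref{thm:uniformly-recurrent} makes $\Unr_{\infw{x}}$ trivial: it is identically $0$ or eventually equal to $\rho_{\infw{x}}$). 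After the easy terms are disposed of, bounding $\Unr_{\infw{x}}(n+1)-\Unr_{\infw{x}}(n)$, i.e.\ the first difference of $p_{\widetilde L}$ for $\widetilde L=\Fac(\infw{x})\cup\Fac(\infw{x})^R$, equivalently ruling out unbounded drops $\Refl_{\infw{x}}(n)-\Refl_{\infw{x}}(n+1)$, \emph{is} the conjecture; your ``reduction'' is a reformulation, not a reduction in difficulty.

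The genuine gap is the step you flag yourself: the appeal to ``a Cassaigne-type theorem applied to $\widetilde L$.'' Cassaigne's theorem is proved for factor languages of a single infinite word; $\widetilde L$ is a union of two such languages and is in general not the factor set of any sequence, nor even extendable, and no language-level version covering this situation is proved or cited. Your fallback of passing to the ``biextendable core'' is not an $O(1)$ repair: although at most two words per length fail one-step extendability, the maximal factorial extendable sublanguage of $\widetilde L$ (the language of the associated two-sided subshift) consists of those $w$ such that $w$ or $w^R$ occurs infinitely often in $\infw{x}$, and for a non-recurrent $\infw{x}$ of linear complexity the discarded set can contain unboundedly many words per length --- so the ``cascade'' you mention is a real obstruction, not a technicality, and even granting a hypothetical Cassaigne theorem for extendable factorial languages you would still have to relate $p_{\widetilde L}$ to the complexity of its core. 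In short, the first and third terms of your decomposition are handled correctly, and the ``in particular'' clause (regularity plus boundedness gives automaticity) is fine \emph{modulo} the main claim, but the heart of the argument --- that linear complexity of $\infw{x}$ forces bounded first difference for the reversal-closure $\widetilde L$ --- is exactly the open content of Conjecture~\ref{conj:growthconj3} and is not established by the proposal.
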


It can be shown that
Conjecture \ref{conj:growthconj3}
holds for the Thue--Morse, period-doubling, 
Golay--Shapiro, second-bit, paperfolding, 
Stewart choral, Baum-Sweet, Chacon, and Mephisto-Waltz 
sequences. 

\begin{conjecture}\label{conj:growthconj4}
Let $\infw{x}$ be a sequence. If the limit
$\lim_{n \to \infty} \frac{ r_{\infw{x}}(n)}{\rho_{\infw{x}}(n)}$
exists, then it is either equal to $\frac{1}{2}$ 
or to $1$. 
\end{conjecture}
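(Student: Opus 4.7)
The plan is to reformulate the conjecture in terms of an asymptotic density and then to split into cases according to the reversal structure of $\infw{x}$. Combining Lemma~\ref{lem:unr-ref-pal} with Equalities~\eqref{eq:combinationURP}, one obtains
\[
\frac{r_{\infw{x}}(n)}{\rho_{\infw{x}}(n)} = 1 - \frac{1}{2}\cdot\frac{\Refl_{\infw{x}}(n) - \Pal_{\infw{x}}(n)}{\rho_{\infw{x}}(n)},
\]
so the conjecture is equivalent to showing that any limit of $(\Refl_{\infw{x}}(n) - \Pal_{\infw{x}}(n))/\rho_{\infw{x}}(n)$ is either $0$ (giving $L = 1$) or $1$ (giving $L = \tfrac{1}{2}$). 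By Theorem~\ref{thm:r-and-rho} this ratio is automatically in $[0,1]$.

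First I would handle the reversal-closed case: here $\Refl_{\infw{x}} = \rho_{\infw{x}}$, so the problem reduces to controlling $\Pal_{\infw{x}}/\rho_{\infw{x}}$, and Corollary~\ref{cor:growth} already yields $L = \tfrac{1}{2}$ whenever $\infw{x}$ is aperiodic and satisfies one of its hypotheses (for instance $\rho_{\infw{x}}(n+1) \sim \rho_{\infw{x}}(n)$, or the sequence is morphic). Next I would treat the case where $\infw{x}$ has at least one unreflected factor $w$, using two structural observations: the family of reflected factors of $\infw{x}$ is closed under taking factors (it is a factorial sublanguage of $\Fac(\infw{x})$), and every length-$n$ factor of $\infw{x}$ containing $w$ is itself unreflected. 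Together with Theorem~\ref{thm:uniformly-recurrent}, these observations settle the uniformly recurrent case at once: either $\infw{x}$ is reversal-closed and the previous step applies, or $\Refl_{\infw{x}}(n) = 0$ for $n$ large, so that $L = 1$.

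The main obstacle is the aperiodic, non-uniformly-recurrent regime in which the reflected factors form a factorial sublanguage with the same exponential growth rate as $\Fac(\infw{x})$ but occupying a strict asymptotic fraction of it. In that regime, language-entropy estimates alone do not pin down $\Refl_{\infw{x}}(n)/\rho_{\infw{x}}(n)$, and finer structural input — perhaps a return-word analysis or $S$-adic decomposition of the orbit closure of $\infw{x}$ — would be needed to rule out intermediate accumulation points. A further subtlety is that the statement of Conjecture~\ref{conj:growthconj4} seems to need an aperiodicity (or $\rho_{\infw{x}} \to \infty$) hypothesis: the eventually periodic sequence $(001)^\omega$ satisfies $r_{\infw{x}}(n)/\rho_{\infw{x}}(n) = 2/3$ for every $n \geq 2$, so its ratio converges but not to $\tfrac{1}{2}$ or $1$. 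A complete proof should therefore begin by clarifying the hypotheses on $\infw{x}$ and then combine Corollary~\ref{cor:growth}, the dichotomy of Theorem~\ref{thm:uniformly-recurrent}, and a new density argument to cover the residual aperiodic non-uniformly-recurrent cases.
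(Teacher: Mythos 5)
The statement you are trying to prove is left as an open conjecture in the paper: the authors do not prove it in general, and the only case they establish is Corollary~\ref{cor:weak-growthconj4} (non-eventually periodic \emph{primitive morphic} sequences), obtained by exactly the route you sketch --- the dichotomy of Theorem~\ref{thm:uniformly-recurrent} (reversal-closed versus no long reflected factors) combined with Corollary~\ref{cor:growth}. So your proposal is not a proof, and you say so yourself; what it does cover coincides with the paper's partial result, and the gap you identify (aperiodic sequences that are not uniformly recurrent, where the reflected factors form a factorial sublanguage of intermediate density) is precisely what remains open. One caveat on your intermediate claim: the uniformly recurrent case is \emph{not} ``settled at once.'' In the reversal-closed branch you have $\Refl_{\infw{x}} = \rho_{\infw{x}}$, hence $r_{\infw{x}}/\rho_{\infw{x}} = \tfrac12 + \tfrac12 \Pal_{\infw{x}}/\rho_{\infw{x}}$, and concluding $L=\tfrac12$ requires $\Pal_{\infw{x}}(n)/\rho_{\infw{x}}(n) \to 0$, which is only known under the growth hypotheses of Corollary~\ref{cor:growth} (e.g.\ $\rho_{\infw{x}}(n+1)\sim\rho_{\infw{x}}(n)$, or morphic); for a reversal-closed uniformly recurrent sequence of exponential complexity, neither Theorem~\ref{thm:Pal-first-diff-rho}(b) nor Theorem~\ref{thm:upper-bound-abcd} forces this, so that subcase is also open.

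Two further points in your favor. First, your reformulation $r_{\infw{x}}(n)/\rho_{\infw{x}}(n) = 1 - \tfrac12(\Refl_{\infw{x}}(n)-\Pal_{\infw{x}}(n))/\rho_{\infw{x}}(n)$ and the two structural observations (reflected factors form a factorial language; any factor containing an unreflected factor is unreflected) are correct and are a reasonable way to organize an attack. Second, your remark about $(001)^\omega$ is a genuine observation about the hypotheses: that sequence is reversal-closed with $\rho(n)=3$, $\Pal(n)=1$ and hence $r(n)=2$ for all $n\ge 2$, so the limit exists and equals $\tfrac23$; thus the conjecture as literally stated (``let $\infw{x}$ be a sequence'') needs an aperiodicity assumption, consistent with the fact that the paper's positive results about it are stated only for non-eventually periodic sequences.
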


\section{Flye Sainte-Marie--Rauzy graphs for reflec-tion-equivalent classes}\label{sec:graphs}

In this section, we prove the following inequality:
\begin{theorem}\label{thm:growthconj2}
Let $\infw{x}$ be a sequence. Then $r_{\infw{x}} (n) \leq r_{\infw{x}}(n+1) +1$ for all $n \geq 0$.
\end{theorem}

Note that we can have equality in Theorem \ref{thm:growthconj2} for infinitely many 
 values of $n$---see Example~\ref{ex:039982} above.
 
 Initially, we discovered this growth property empirically. 
 We prove it here by means of a graph construction related to the work of Flye Sainte-Marie. 
 We remark that another recent application of this idea leads to a bound on the maximal number of powers in finite words~\cite{BrlekLi-2025,LiPachockiRadoszewski-2024}. 
 
 Flye Sainte-Marie graphs (also known as Rauzy graphs or De Bruijn graphs) for sequences $\infw{x}$ can be defined in the following way: for an integer $ n\geq 0$, the graph $\Gamma_{\infw{x}}(n)$ is a directed graph whose vertex set is $\Fac_{\infw{x}}(n)$ and edge set is $\Fac_{\infw{x}}(n+1)$.
An edge $e \in \Fac_{\infw{x}}(n+1)$ starts at the vertex $u$ and ends at the vertex $v$ if and only if $u$ is a prefix of $e$ and $v$ is a suffix. To simplify the notation, for words $w_1, w_2$ of length $n$ and a word $w_3$ of length $n+1$, we write $w_1 \xrightarrow{w_3} w_2$ if $w_1$ and $w_2$ are a prefix and a suffix of $w_3$, respectively.

\begin{remark}
 Flye Sainte-Marie first introduced the previous graph construction in \cite{Flye-Sainte-Marie} to represent the overlaps between all the binary words of the same length. De Bruijn rediscovered the same graph construction in \cite{de-Bruijn46} and acknowledged Flye Sainte-Marie’s priority of \cite{Flye-Sainte-Marie} later in \cite{de-Bruijn75}. Rauzy used the same construction in \cite{Rauzy} to study the overlaps of factors in a word. 
\end{remark}

A directed graph is {\em (weakly) connected} if there is an undirected path between any pair of its vertices.

 The following proposition is “folklore”. To the authors' knowledge, the proof of this proposition does not seem to properly appear 
 in the literature. 

\begin{proposition}
 \label{prop:connected}
 Let $\infw{x}$ be a sequence.
 For all $n\ge 0$, the graph $\Gamma_{\infw{x}}(n)$ is connected.
\end{proposition}

\begin{proof}
Let $u$ and $v$ be two distinct length-$n$ factors of $\infw{x}$ and let us suppose that $u$ occurs before $v$ in $\infw{x}$. Then there are two integers $i<j$ such that $\infw{x}[i..i+n-1]=u$ and $\infw{x}[j.. j+n-1]=v$. Then the path $\infw{x}[i..i+n],\infw{x}[i+1..i+n+1],\ldots,\infw{x}[j-1..j+n-1]$ connects $u$ and $v$.
\end{proof}

\begin{remark}
 Proposition \ref{prop:connected} has already been (indirectly) mentioned in \cite[p.\ 199, lines 4--6]{Rote1994}, which states that every graph $\Gamma_{\infw{x}}(n)$ necessarily has a vertex $o$ from which every other vertex can be reached by a direct path. Thus each pair of vertices $u$, $v$ can be connected by an undirected path passing through $o$.
\end{remark}

 Let $\infw{x}$ be a sequence. For all $n \geq 0$, let $\refl_{\infw{x}}(n)$ be the set of all reflection-equivalent classes of length-$n$ factors of $\infw{x}$, i.e.,
\begin{equation*}
 \refl_{\infw{x}}(n)=\{\{u,u^R\} \suchthat u \in \Fac_{\infw{x}}(n)\},
\end{equation*}
 where $u$ may be equal to $u^R$. From the definition, the cardinality of the set $\refl_{\infw{x}}(n)$ is $r_{\infw{x}}(n)$.

Let $\Lambda_{\infw{x}}(n)$ be a directed graph such that its vertex set is $\refl_{\infw{x}}(n)$ and its edge set is a subset of $\Fac_{\infw{x}}(n+1)$.
An element $e \in \Fac_{\infw{x}}(n+1)$ is an edge of $\Lambda_{\infw{x}}(n)$ from $c_1$ to $c_2$ if and only if there exist two length-$n$ factors $u,v$ of $\infw{x}$ such that $u \in c_1$, $v \in c_2$, and $u \xrightarrow{e} v$.

\begin{remark}
From the definition, each element in $\Fac_{\infw{x}}(n+1)$ appears exactly once as an edge in $\Lambda_{\infw{x}}(n)$. This is easily seen by letting $e \in \Fac_{\infw{x}}(n+1)$ and by letting $u$ and $v$ be the length-$n$ prefix and suffix of $e$, respectively, and by using the property that $\{u, u^R\},\{v, v^R\} \in \refl_{\infw{x}}(n)$. Thus, the number of vertices in $\Lambda_{\infw{x}}(n)$ is $r_{\infw{x}}(n)$ and the number of edges in $\Lambda_{\infw{x}}(n)$ is $\rho_{\infw{x}}(n+1)$.
\end{remark}

\begin{remark}
Letting $\infw{x}$ be a sequence, the graph $\Lambda_{\infw{x}}(n)$ is connected for arbitrary $n \geq 0$. This follows in a direct way from $\Gamma_{\infw{x}}(n)$ being connected from Proposition \ref{prop:connected}.
 \end{remark}

\begin{lemma}
\label{lem:main}
Let $\infw{x}$ be a sequence.
Let $u,v$ be two distinct length-$(n+1)$ factors of $\infw{x}$ for some non-negative integer $n$. Then $u= v^R$ if and only if there are two distinct vertices $c_1, c_2$ in $\Lambda_{\infw{x}}(n)$ such that $u$ is an edge from $c_1$ to $c_2$ and $v$ is an edge from $c_2$ to $c_1$. 
\end{lemma}

\begin{remark}\label{remarkFig1}
If there are two distinct elements in $c_1$ and/or $c_2$, there may be at most two distinct edges from $c_1$ to $c_2$ and vice-versa. So, to prove Lemma~\ref{lem:main}, 
 we need to prove that for two distinct vertices, there is at most one edge from one to the other. Furthermore, if these two vertices are connected in both directions, then the two connecting edges in between are reflectively equivalent. 
 Graphically, Lemma \ref{lem:main} gives us that 
 $u= v^R$ and $u \neq v$ if and only if the graph $\Lambda_{\infw{x}}(n)$ has an occurrence of the pattern depicted in Figure~\ref{graphpat}.
\begin{figure}[htb]
\begin{center}
\begin{tikzpicture}[scale=0.2]
\tikzstyle{every node}+=[inner sep=0pt]
\draw [black] (27.5,-16) circle (3);
\draw (27.5,-16) node {$c_1$};
\draw [black] (49.6,-16) circle (3);
\draw (49.6,-16) node {$c_2$};
\draw [black] (29.667,-13.933) arc (127.71845:52.28155:14.519);
\fill [black] (29.67,-13.93) -- (30.61,-13.84) -- (29.99,-13.05);
\draw (38.55,-10.4) node [above] {$v$};
\draw [black] (47.349,-17.976) arc (-54.40916:-125.59084:15.119);
\fill [black] (47.35,-17.98) -- (46.41,-18.03) -- (46.99,-18.85);
\draw (38.55,-21.3) node [below] {$u$};
\end{tikzpicture}
\end{center}
\caption{The consequence of Lemma~\ref{lem:main} in the graph $\Lambda_{\infw{x}}(n)$, when $u = v^R$ but $u \not= v$.}
\label{graphpat}
\end{figure}
\end{remark}

\begin{proof}[Proof of Lemma~\ref{lem:main}] 
 If $u$ and $v = u^{R}$ are both edges of $\Lambda_{\text{{\bf x}}}(n)$, 
 then we now show that $u = (c_{1}, c_{2})$ and $v = (c_2, c_1)$ for some vertices $c_{1}$, $c_{2}$ of $\Lambda_{\text{{\bf x}}}(n)$ 
 (see Fig.\ \ref{graphpat}). 
 
 As indicated in Remark \ref{remarkFig1}, 
 we need to show that, 
 in the case where $c_1 \neq c_2$, there are at most two 
 edges between two distinct vertices, and if there are exactly two edges, then one is the reverse of the other. In this direction, the given 
 claim is clearly true for $n=0$ and $n=1$. 
 For $n \geq 2$, we now show that if
 there exists $u$ such that $u \neq u^R$ and $u, u^R \in \Fac_{\infw{x}}(n+1)$, then there exist two distinct vertices satisfying the conditions in the statement. Let $u=au'b$, where $a,b$ are letters. From the hypothesis that $u^R \in \Fac_{\infw{x}}(n+1)$, one has $au', u'b, u'^Ra, bu'^R \in \Fac_{\infw{x}}(n)$. Thus, $c_1=\{au', u'^Ra\}$ and $c_2=\{u'b, bu'^R\}$ are two vertices in the graph $\Lambda_{\infw{x}}(n)$. Moreover, since 
 $$au' \xrightarrow{au'b} u'b \text{ and }  bu'^R \xrightarrow{bu'^Ra} u'^Ra,$$ the word $au'b$ is an edge from $c_1$ to $c_2$ and $bu'^Ra$ is an edge from $c_2$ to $c_1$. 
 Finally, to prove that 
 $c_1 \neq c_2$, we can use
 a proof by contradiction
 together with a case analysis
 by considering 
 the $au'=bu'^R$ case 
 and the 
 $au'=u'b$ case. A similar approach
 can be used to prove that $u = v^{R}$.  We leave the details to the reader. 
\end{proof}

Now we introduce the Flye Sainte-Marie graphs for reflection-equivalent classes. Let $\infw{x}$ be a sequence. For all $n \geq 0$, let $K_{\infw{x}}(n)$ be a directed graph such that its vertex set is $\refl_{\infw{x}}(n)$ and its edge set is $\refl_{\infw{x}}(n+1)$.
Let $e \in \refl_{\infw{x}}(n+1)$. We define the edges of $K_{\infw{x}}(n)$ as follows:
\begin{itemize}
 \item if $|e \cap \Fac_{\infw{x}}(n+1)|=1$, there exist a unique word $u \in e$ and two vertices $c_1,c_2 \in \refl_{\infw{x}}(n)$ such that $u$ is from $c_1$ to $c_2$ in $\Lambda_{\infw{x}}(n)$. In the graph $K_{\infw{x}}(n)$, let $e$ be the edge from $c_1$ to $c_2$.
 \item if $|e \cap \Fac_{\infw{x}}(n+1)|=2$, from Lemma~\ref{lem:main} there exists a unique word $u \in e$ satisfying the following four conditions: 
 \begin{itemize}
 \item $u \neq u^R$;
 \item $u$ is lexicographically smaller than $u^R$; 
 \item $u, u^R \in \Fac_{\infw{x}}(n+1)$; 
 \item there exist two vertices $c_1,c_2 \in \refl_{\infw{x}}(n)$ such that $u$ is from $c_1$ to $c_2$ and $u^R$ is from $c_2$ to $c_1$ in $\Lambda_{\infw{x}}(n)$.
 \end{itemize} 
 In the graph $K_{\infw{x}}(n)$, let $e$ be the edge from $c_1$ to $c_2$.
\end{itemize}

\begin{proposition}
\label{prop: graph-final}
 Let $\infw{x}$ be a sequence. For all $n \geq 0$, the graph $K_{\infw{x}}(n)$ has exactly $r_{\infw{x}} (n)$ vertices and $r_{\infw{x}} (n+1)$ edges. Moreover, it is connected.
\end{proposition}

\begin{proof}
 From the construction, each element in $\refl_{\infw{x}}(n+1)$ appears exactly once as an edge in $K_{\infw{x}}(n)$ and the vertex set of this graph is exactly $\refl_{\infw{x}}(n)$. Thus, the number of vertices and edges is respectively $r_{\infw{x}} (n)$ and $r_{\infw{x}} (n+1)$.
 For the connectivity of the graph $K_{\infw{x}}(n)$, it is enough to show that every pair of vertices $c_1,c_2$ connected by a single edge $u$ in $\Lambda_{\infw{x}}(n)$ is also connected by the single edge $\{u,u^R\}$ in $K_{\infw{x}}(n)$. Since the graph $\Lambda_{\infw{x}}(n)$ is connected,
 the graph $K_{\infw{x}}(n)$ is also connected. 
\end{proof}

We can now complete the proof of Theorem~\ref{thm:growthconj2}:
\begin{proof}[Proof of Theorem~\ref{thm:growthconj2}]
This follows immediately from Proposition \ref{prop: graph-final}, 
since $K_{\infw{x}}(n)$ is a connected graph with $r_{\infw{x}} (n)$ vertices and $r_{\infw{x}} (n+1)$ edges. Thus
$r_{\infw{x}} (n) \leq r_{\infw{x}}(n+1) +1$.
\end{proof}

\section{Eventually periodic sequences}
\label{sectioneventual}
 
 We can characterize eventually periodic sequences
(i.e., sequences that are periodic from some index on)
in terms of their reflection complexity.

\begin{theorem}\label{thm:periodicity}
A sequence $\infw{x}$ is eventually periodic if
and only if both sequences $(r_{\infw{x}}(2n))_{n\ge 0}$ and 
$(r_{\infw{x}}(2n+1))_{n\ge 0}$ are eventually constant.
\end{theorem}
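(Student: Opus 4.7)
The plan is to reduce the statement to Morse--Hedlund (Theorem~\ref{thm:morse-hedlund}) via the basic inequalities between $r_{\infw{x}}$ and $\rho_{\infw{x}}$ established in Theorem~\ref{thm:r-and-rho}, together with the parity monotonicity of Theorem~\ref{thm:paritygrowth}. The crucial observation is that Theorem~\ref{thm:paritygrowth} says $r_{\infw{x}}(n) \leq r_{\infw{x}}(n+2)$, so both subsequences $(r_{\infw{x}}(2n))_{n \ge 0}$ and $(r_{\infw{x}}(2n+1))_{n \ge 0}$ are non-decreasing; consequently, each one is eventually constant if and only if it is bounded.

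For the forward direction, I would assume $\infw{x}$ is eventually periodic. By Morse--Hedlund (Theorem~\ref{thm:morse-hedlund}), $\rho_{\infw{x}}$ is bounded. Since Theorem~\ref{thm:r-and-rho} gives $r_{\infw{x}}(n) \leq \rho_{\infw{x}}(n)$, the sequence $r_{\infw{x}}$ is bounded as well. Combined with the fact that $(r_{\infw{x}}(2n))_{n \ge 0}$ and $(r_{\infw{x}}(2n+1))_{n \ge 0}$ are both non-decreasing, each is eventually constant.

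For the converse, I would assume both parity subsequences of $r_{\infw{x}}$ are eventually constant, so that $r_{\infw{x}}$ is bounded overall. From the other direction of Theorem~\ref{thm:r-and-rho}, namely $\tfrac{1}{2} \rho_{\infw{x}}(n) \leq r_{\infw{x}}(n)$, it follows that $\rho_{\infw{x}}$ is bounded. Applying Morse--Hedlund once more yields that $\infw{x}$ is eventually periodic.

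There is no real obstacle here; the result is essentially a corollary of Theorems~\ref{thm:morse-hedlund}, \ref{thm:r-and-rho}, and \ref{thm:paritygrowth}. The only subtlety worth noting in the write-up is that one cannot conclude eventual constancy of $r_{\infw{x}}$ itself (since its two parity subsequences might stabilize to different values, as already illustrated in Example~\ref{ex:039982}), which is precisely why the statement must be phrased in terms of the two parity subsequences separately rather than $r_{\infw{x}}$ as a single sequence.
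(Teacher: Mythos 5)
Your proposal is correct and follows essentially the same route as the paper's proof: monotonicity of the two parity subsequences from Theorem~\ref{thm:paritygrowth}, the sandwich $\tfrac{1}{2}\rho_{\infw{x}}(n) \leq r_{\infw{x}}(n) \leq \rho_{\infw{x}}(n)$ from Theorem~\ref{thm:r-and-rho}, and the Morse--Hedlund characterization of eventual periodicity via bounded factor complexity. The paper phrases it as a dichotomy on whether $\infw{x}$ is eventually periodic rather than as two separate implications, but the content is identical.
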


\begin{proof}
From Theorem~\ref{thm:paritygrowth} both sequences
$(r_{\infw{x}}(2n))_{n\ge 0}$ and $(r_{\infw{x}}(2n+1))_{n\ge 0}$ 
are non-decreasing. Also, from the inequalities 
$\frac{1}{2}\rho_{\infw{x}}(n) \leq r_{\infw{x}}(n)
\leq \rho_{\infw{x}}(n)$ in Theorem~\ref{thm:r-and-rho},
and the fact that the sequence $(\rho_{\infw{x}}(n))_{n\ge 0}$ is 
non-decreasing, we have that either the three integer 
sequences $(r_{\infw{x}}(2n))_{n\ge 0}$, 
$(r_{\infw{x}}(2n+1))_{n\ge 0}$, and $(\rho_{\infw{x}}(n))_{n\ge 0}$
are all bounded, or else none of them is.
Furthermore, we know that $(\rho_{\infw{x}}(n))_{n\ge 0}$ is
bounded if and only if the sequence $\infw{x}$ is 
eventually periodic (Theorem~\ref{thm:morse-hedlund}
above). Hence, we have two cases depending on the periodicity of $\infw{x}$.

\begin{itemize}
\item[(a)]
If $\infw{x}$ is eventually periodic, then 
$(\rho_{\infw{x}}(n))_{n\ge 0}$ is bounded, 
so $(r_{\infw{x}}(2n))_{n\ge 0}$ and 
$(r_{\infw{x}}(2n+1))_{n\ge 0}$ are eventually constant.
\item[(b)]
If $\infw{x}$ is not eventually periodic, its
factor complexity is not bounded, thus both sequences
$(r_{\infw{x}}(2n))_{n\ge 0}$ and 
$(r_{\infw{x}}(2n+1))_{n\ge 0}$ tend to infinity.
\end{itemize}
This ends the proof.
\end{proof}

\begin{remark}
If $\infw{x}$ is eventually periodic, the eventual 
values of $(r_{\infw{x}}(2n))_{n\ge 0}$ and 
$(r_{\infw{x}}(2n+1))_{n\ge 0}$ can be either equal or 
distinct, as seen from the examples of the sequences
$(01)^{\omega}$ and $(011)^{\omega}$.
\end{remark}

\section{Sturmian sequences and generalizations}\label{sec:Sturmian}
 In this section, we study Sturmian sequences as well as some generalizations. 
 First we state the following result, which 
characterizes Sturmian sequences in terms of their 
reflection complexity.

\begin{theorem}\label{thm:sturmian}
Let $\infw{x}$ be a non-eventually periodic sequence over a finite alphabet.
\begin{itemize}
\item[(a)] For all $n \geq 1$, we have $r_{\infw{x}}(n) \geq 1 + \lfloor \frac{n+1}{2} \rfloor$;
\item[(b)] We have $r_{\infw{x}}(n) = 1 + \lfloor \frac{n+1}{2} \rfloor$ if and only if $\infw{x}$ is Sturmian.
\end{itemize}
\end{theorem}

\begin{proof} For each integer $n \geq 1$, let ${\mathcal S}_n$ be the permutation group on $n$
elements. Let $\sigma_n$ be the permutation defined by
\[
\sigma_n :=
\begin{pmatrix}
1 &2 &\ldots &n-1 &n \\
n &n-1 &\ldots &2 &1 \\
\end{pmatrix}
\]
and $G_n$ be the subgroup of ${\mathcal S}_n$ generated by $\sigma_n$, i.e., the group
$\{ \sigma_n, {\rm id}_n\}$. The number of distinct orbits of $\{1, 2, \ldots, n \}$ under $G_n$ is 
equal to $n/2$ if $n$ is even, and to $(n+1)/2$ if $n$ is odd, which can be written
 $\lfloor (n+1)/2 \rfloor$ in both cases. Thus, applying~\cite[Theorem~1]{CharlierPuzyninaZamboni2017}
proves the first item of the theorem and the implication
$\Longrightarrow$ of the second item. 

To prove the last assertion, suppose that $\infw{x}$
is a Sturmian sequence. We know that every Sturmian 
sequence is reversal-closed (see
\cite[Theorem~4, p.\ 77]{DroubayPirillo1999}, where 
reversals are called mirror images). Furthermore, it is
proved in~\cite[Theorem~5, p.\ 77]{DroubayPirillo1999}
that a sequence is Sturmian if and only if it has one palindrome of all even lengths and two palindromes of all odd lengths.
Now, from Theorem~\ref{thm:r-and-rho}(b) we have
that
\[
r_{\infw{x}}(n) = \frac{1}{2} (\rho_{\infw{x}}(n) + 
\Pal_{\infw{x}}(n)) =
\begin{cases}
\frac{n+2}{2} = 1+ \lfloor \frac{n+1}{2} \rfloor, 
&\text{if $n$ even;} \\
\frac{n+3}{2} = 1+ \lfloor \frac{n+1}{2} \rfloor,
&\text{if $n$ odd}.
\end{cases}
\]
This completes the proof.
\end{proof}

 With regard to the above referenced work of 
Charlier et al.~\cite{CharlierPuzyninaZamboni2017}, 
also see the related and recent work by 
Luchinin and Puzynina~\cite{LuchininPuzynina2023}. 

\bigskip 

 The following is an analog of 
the Morse--Hedlund theorem (which is recalled in Theorem~\ref{thm:morse-hedlund} above).

\begin{corollary}\label{cor:periodic}
A sequence $\infw{x}$ is eventually periodic if and 
only if there exists $n \geq 1$ such that $r_{\infw{x}}(n) \leq \left\lfloor \frac{n+1}{2} \right\rfloor$.
Furthermore both sequences $(r_{\infw{x}}(2n))_n$
and $(r_{\infw{x}}(2n+1))_n$ are then eventually 
constant. 
\end{corollary}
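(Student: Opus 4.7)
The plan is to derive this corollary directly from Theorem~\ref{thm:sturmian}(a) and Theorem~\ref{thm:periodicity}, which together provide both directions almost immediately.

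For the reverse implication, I would argue by contraposition. Suppose $\infw{x}$ is not eventually periodic. Then Theorem~\ref{thm:sturmian}(a) gives $r_{\infw{x}}(n) \geq 1 + \lfloor (n+1)/2 \rfloor > \lfloor (n+1)/2 \rfloor$ for every $n \geq 1$. Hence no such $n$ exists, which is exactly the contrapositive of the ``if'' direction.

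For the forward implication, suppose $\infw{x}$ is eventually periodic. By Theorem~\ref{thm:morse-hedlund}(d), $\rho_{\infw{x}}$ is bounded, and since $r_{\infw{x}}(n) \leq \rho_{\infw{x}}(n)$ by Theorem~\ref{thm:r-and-rho}, the sequence $(r_{\infw{x}}(n))_{n\ge 0}$ is also bounded. As $\lfloor (n+1)/2 \rfloor$ tends to infinity with $n$, there must exist some $n \geq 1$ with $r_{\infw{x}}(n) \leq \lfloor (n+1)/2 \rfloor$, as required.

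For the final ``furthermore'' assertion, once we have established that $\infw{x}$ is eventually periodic, Theorem~\ref{thm:periodicity} directly tells us that both $(r_{\infw{x}}(2n))_{n\ge 0}$ and $(r_{\infw{x}}(2n+1))_{n\ge 0}$ are eventually constant. There is essentially no obstacle here: the substantive content has already been developed in the proofs of Theorems~\ref{thm:sturmian} and~\ref{thm:periodicity}, and the corollary is a clean packaging of those results into a Morse--Hedlund-style statement for reflection complexity.
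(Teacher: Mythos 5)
Your proposal is correct and follows essentially the same route as the paper: one direction by contraposing Theorem~\ref{thm:sturmian}(a), the other from boundedness of the factor (hence reflection) complexity of an eventually periodic sequence, and the final assertion directly from Theorem~\ref{thm:periodicity}. No gaps.
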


\begin{proof}
Let $\infw{x}$ be a sequence. Contraposing Property~(a) 
of Theorem~\ref{thm:sturmian}, we obtain that if 
$r_{\infw{x}}(n) \leq \lfloor \frac{n+1}{2} \rfloor$ 
for some $n$, then $\infw{x}$ must be eventually 
periodic. 
Conversely, if $\infw{x}$ is eventually periodic,
it has a bounded number of factors, hence there exists
some integer $n$ for which the inequality of the 
statement is true. The last assertion is
Theorem~\ref{thm:periodicity} above.
\end{proof}

 Recall that every quasi-Sturmian sequence $\infw{x}$ 
 can be written as $\infw{x} = y f(\infw{z})$, where 
 $y$ is a word over a finite alphabet, $\infw{z}$ is a 
 (necessarily binary) Sturmian sequence, and $f$ an 
 {\em aperiodic morphism} from $\{0, 1\}$ to a finite 
 alphabet, see~\cite{Cassaigne1997,Coven1975,Paul197475}. 
 (Recall that $f$ aperiodic means that $f(01) \neq f(10)$.)
 We state the following theorem. 

\begin{theorem}\label{thm:quasi-sturmian}
 Let $\infw{x} = y f(\infw{z})$ be a quasi-Sturmian sequence, where $y$ is a word, $\infw{z}$ is a Sturmian sequence, and $f$ is an 
 aperiodic morphism  from $\{0, 1\}$ to a finite alphabet. Then 
\begin{itemize}
 \item[(a)]
 either $f(\infw{z})$ is reversal-closed and
 $r_{\infw{x}} (n)= \frac{n}{2} + O(1)$;
 \item[(b)]
 or else $f(\infw{z})$ is not reversal-closed and
 $r_{\infw{x}} (n) = n + O(1)$.
\end{itemize} 
\end{theorem}

\begin{proof}
 Since  $r_{\infw{x}}(n) = r_{f(\infw{z})}(n) + O(1)$,  it suffices to prove both statements for $r_{f(\infw{z})}$ instead of $r_{\infw{x}}$. 
 Since $\infw{z}$ and, hence, $f(\infw{z})$  are both uniformly recurrent, we can apply  Theorem~\ref{thm:uniformly-recurrent} to 
 $f(\infw{z})$.  The desired result then easily follows by 
 considering the case whereby 
 $f(\infw{z})$ is reversal-closed, so that
 $r_{f(\infw{z})}= \frac{1}{2}(\rho_{f(\infw{z})}
+ \Pal_{f(\infw{z})})$, 
 and the case such that it  
 is not reversal-closed, so that 
 $r_{f(\infw{z})}(n) = \rho_{f(\infw{z})}(n)$. 
\end{proof}

 Among several generalizations of Sturmian sequences, episturmian sequences have in particular the property---sometimes even taken 
 as part of their definition---to be reversal-closed.  Furthermore, their palindrome complexity has been studied~\cite{berstel2007,GlenJustin2009}. Here we develop a theorem similar
 to Theorem~\ref{thm:sturmian} above for these sequences.

\begin{definition}
Let $A$ be a finite alphabet with cardinality $\ell$.
A sequence $\infw{x}$ over $A$ is \emph{episturmian} if it is reversal-closed and has at most one left special factor of each length. 
An episturmian sequence $\infw{x}$ is \emph{$\ell$-strict} 
if it has exactly one left special factor of each length 
and every left special factor $u$ of $\infw{x}$ has $\ell$ 
distinct left extensions in $\infw{x}$.
\end{definition}

 We compute the reflection complexity of  episturmian sequences as follows. (Recall 
 that the factor complexity of an $\ell$-strict episturmian sequence is given 
 by $\rho_{\infw{x}}(n) = (\ell -1) n +1$.)

\begin{theorem}
Let $\infw{x}$ be an $\ell$-strict episturmian sequence.
Then, for all $n\ge 0$, we have
\[
r_{\infw{x}}(n) = 
(\ell-1) \left\lfloor \frac{n+1}{2} \right\rfloor + 1.
\]
\end{theorem}
\begin{proof}
 Let $\infw{x}$ be an $\ell$-strict episturmian sequence.
 The case $n=0$ is true.
 Assume that $n\ge 1$.
 Then by~\cite[Theorem~7]{DroubayJustinPirillo2001}, we have $\rho_{\infw{x}}(n) = (\ell-1)n+1$.
 We also know from \cite{DroubayPirillo1999} that
 \[
 \Pal_{\infw{x}}(n) 
 = 
 \begin{cases}
 1, & \text{if $n$ is even};\\
 \ell, & \text{if $n$ is odd}.
 \end{cases}
 \]
Using these facts, together with Theorem~\ref{thm:r-and-rho}(b), we deduce the desired result.
\end{proof}

\begin{example}
 For the Tribonacci sequence $\infw{tr}$, which is the fixed point of the morphism $0\mapsto 01, 1\mapsto 02, 2\mapsto 0$, we have $r_{\infw{tr}}(n) = 2 \left\lfloor \frac{n+1}{2} \right\rfloor + 1$ for all $n \geq 0$.
\end{example}

Since one interpretation of Sturmian sequences is
the binary coding of irrational trajectories on a 
square billiard table, we turn our attention toward 
 irrational trajectories
on a hypercube. Using results 
 due to Baryshnikov \cite{Baryshnikov1995}
 together with 
 Corollary~\ref{cor:growth}, 
 it follows in a direct way that 
 \[r_{\infw{x}}(n) \sim 
\frac{1}{2}
\sum_{k=0}^{\min{(s,n)}} k! 
 \binom{s}{k} \binom{n}{k}, 
\]
 when $n$ tends to infinity,  for an irrational billiard sequence 
 $\infw{x}$ on a  hypercube of dimension $(s+1)$.  In particular, we have that 
 $r_{\infw{x}}(n) = \Theta(n^s)$ when $n$ tends to  infinity. 

So-called complementation-symmetric Rote sequences, which were defined and studied 
 by Rote in 1994 
 \cite{Rote1994}, are related to Sturmian sequences as stated below in Theorem~\ref{thm:symbolization-Rote}.

\begin{definition}
Let $\infw{x}$ be a binary sequence.
Then $\infw{x}$ is called a \emph{Rote sequence} if its factor complexity satisfies $\rho_{\infw{x}}(n) = 2n$ for all $n\ge 1$.
The sequence $\infw{x}$ is said to be \emph{complementation-symmetric} if its set of factors is closed under the exchange morphism, i.e., if $w$ is a factor of $\infw{x}$, so is $E(w)$.
\end{definition}

We consider the mapping $\Delta\colon\{0,1\}^+\to \{0,1\}^*$ defined as follows: $\Delta(a)=a$ for all $a\in\{0,1\}$ and  $$\Delta(v(0)v(1)\cdots v(n))=u(0) u(1) \cdots u(n-1)$$ for $n\ge 1$ with $u(i) = (v(i+1)-v(i)) \bmod{2}$ for all $i\in\{0,\ldots,n-1\}$.
There is a natural extension of $\Delta$ to sequences: if $\infw{x}=(x(n))_{n\ge 0}$ is a binary sequence, then $\Delta(\infw{x})$ is the sequence whose $n$th letter is defined by $$(x(n+1)-x(n)) \bmod{2}$$ for all $n\ge 0$.
Observe that $\Delta(\infw{x})$ is the sequence of first differences of $\infw{x}$, taken modulo $2$.

\begin{theorem}[{\cite{Rote1994}}]
\label{thm:symbolization-Rote}
 A binary sequence $\infw{x}$ is a complementation-symmetric Rote sequence if and only if $\Delta(\infw{x})$ is Sturmian.
\end{theorem}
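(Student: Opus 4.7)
The plan is to establish both directions by exploiting two elementary facts about the map $\Delta$. First, for any binary word $w$, one has $\Delta(w) = \Delta(E(w))$, because complementing every letter preserves pairwise differences modulo $2$. Second, if $w$ and $w'$ are binary words of the same length with $\Delta(w) = \Delta(w')$, then the pointwise difference $d_i := w(i) - w'(i) \pmod 2$ satisfies $d_{i+1} = d_i$, so $w$ and $w'$ either agree everywhere or disagree everywhere. Since $E$ has no fixed point on nonempty binary words, the fibers of $\Delta\colon\{0,1\}^{n+1}\to\{0,1\}^n$ are exactly the $E$-orbits $\{w, E(w)\}$ of size two, making $\Delta$ exactly $2$-to-$1$ on length-$(n+1)$ binary words.

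For the forward implication, assume $\infw{x}$ is a complementation-symmetric Rote sequence. Then the set of length-$(n+1)$ factors of $\infw{x}$ is closed under $E$ and so partitions into $E$-pairs, each of which $\Delta$ collapses to one length-$n$ factor of $\infw{z} := \Delta(\infw{x})$. This yields a bijection between $E$-pairs of length-$(n+1)$ factors of $\infw{x}$ and length-$n$ factors of $\infw{z}$, so $\rho_{\infw{z}}(n) = \rho_{\infw{x}}(n+1)/2 = n+1$ for all $n \geq 0$. Since $\rho_{\infw{z}}$ is unbounded, Theorem~\ref{thm:morse-hedlund} rules out eventual periodicity, so $\infw{z}$ is Sturmian.

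For the backward implication, assume $\infw{z} := \Delta(\infw{x})$ is Sturmian. I would first reduce the problem to showing that $\infw{x}$ is complementation-symmetric; once this is established, the Rote bound $\rho_{\infw{x}}(n+1) = 2 \rho_{\infw{z}}(n) = 2(n+1)$ follows immediately from the $2$-to-$1$ structure of $\Delta$. A length-$(n+1)$ factor of $\infw{x}$ is determined by a pair $(a, u) \in \{0,1\}\times \Fac(\infw{z})$ with $|u|=n$, where $a$ is its first letter, and complementation-symmetry is precisely the statement that for every $u$, both pairs $(0,u)$ and $(1,u)$ occur. Representing $\infw{z}$ as a rotation coding $\infw{z}(n) = \lfloor (n+1)\alpha + \beta\rfloor - \lfloor n\alpha + \beta\rfloor$, the first letter of $\infw{x}$ at a position $i$ where $u$ starts equals $\infw{x}(0) + \lfloor i\alpha + \beta\rfloor \pmod 2$; one must then show that over the positions $i$ with $\{i\alpha + \beta\}$ in the subinterval of the circle associated to $u$, the integer part $\lfloor i\alpha + \beta\rfloor$ takes both parities. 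This follows from equidistribution of $(n\alpha \bmod 2)_{n\ge 0}$ in $[0,2)$, or equivalently from the three-distance theorem applied on a double cover of the circle.

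The main obstacle is precisely this parity-switching step in the backward direction: the forward implication and the complexity bookkeeping are routine once one accepts the $2$-to-$1$ structure of $\Delta$, but showing that every Sturmian factor admits both $\Delta$-preimages as factors of $\infw{x}$ genuinely uses the Diophantine structure of Sturmian sequences. A combinatorial alternative would be to analyze the Rauzy graph of $\infw{z}$ at level $n$, which is known to be a circuit with $n+1$ vertices and unique left- and right-special factors, lift it via $\Delta^{-1}$ to the Rauzy graph of $\infw{x}$, and show that aperiodicity and biinfiniteness force both $\Delta$-preimages of every factor to appear, yielding complementation-symmetry and the complexity $\rho_{\infw{x}}(n+1) = 2(n+1)$ in one stroke.
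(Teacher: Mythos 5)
The paper does not actually prove Theorem~\ref{thm:symbolization-Rote}: it is imported verbatim from Rote's 1994 paper, so there is no internal proof to compare against, and your attempt has to be judged on its own merits. On those merits it is essentially correct. Your two preliminary facts about $\Delta$ (that $\Delta(w)=\Delta(E(w))$ and that equal $\Delta$-images force $w'=w$ or $w'=E(w)$, so $\Delta$ is exactly $2$-to-$1$ on nonempty binary words with fibers the $E$-orbits) are right, and the forward direction is complete: complementation-symmetry pairs the length-$(n+1)$ factors of $\infw{x}$ into $E$-orbits, $\Delta$ induces a bijection between these pairs and the length-$n$ factors of $\Delta(\infw{x})$, and the Rote hypothesis then gives $\rho_{\Delta(\infw{x})}(n)=n+1$ (the appeal to Theorem~\ref{thm:morse-hedlund} is superfluous under the paper's definition of Sturmian, but harmless). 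The converse is correctly reduced to complementation-symmetry of $\infw{x}$, and your parity-switching step is indeed the whole content; as written it is a sketch that needs three classical ingredients to be complete: (i) the Morse--Hedlund representation of a Sturmian sequence as a lower or upper mechanical sequence $\lfloor(n+1)\alpha+\beta\rfloor-\lfloor n\alpha+\beta\rfloor$ with $\alpha$ irrational (with the ceiling case treated in parallel), (ii) the standard fact that the occurrence set of a length-$n$ factor $u$ is $\{i:\{i\alpha+\beta\}\in I_u\}$ for an arc $I_u$ of positive length, and (iii) density of the orbit of the rotation by $\alpha$ on $\mathbb{R}/2\mathbb{Z}$, which holds because $\alpha/2$ is irrational and guarantees that both lifted arcs $I_u$ and $1+I_u$ are visited, i.e., both $\Delta$-preimages of $u$ occur in $\infw{x}$; density also disposes of boundary/endpoint issues. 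So the gap is one of detail and citation rather than of substance. By contrast, the closing ``Rauzy graph lifting'' alternative is not yet an argument: the assertion that aperiodicity and biinfiniteness force both preimages of every factor to appear is exactly what must be proved, so you should rely on the rotation picture (or on Rote's original combinatorial proof) rather than on that remark.
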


In fact, for each Sturmian sequence $\infw{s}$, there are two associated complem-entation-symmetric Rote sequences $\infw{x}$ and $\infw{x}'$ with $\infw{x}'=E(\infw{x})$.
The factors in $\infw{s}$ and its corresponding Rote sequences are closely related as shown below.

\begin{proposition}[{\cite{Rote1994}}; also see
{\cite[Proposition~2]{Medkova2018}} or 
{\cite[Lemma~2.7]{MedkovaPelantovaVuillon2019}}]
\label{prop:factors-Rote-seq}
 Let $\infw{s}$ be a Sturmian sequence and let $\infw{x}$ be the complementation-symmetric Rote sequence such that $\infw{s}=\Delta(\infw{x})$.
 Then $u$ is a factor of $\infw{s}$ if and only if both words $v,v'$ such that $u=\Delta(v)=\Delta(v')$ are factors of $\infw{x}$.
 Furthermore, for every $n\ge 0$, $u$ occurs at position $n$ in $\infw{s}$ if and only if $v$ or $v'$ occurs at position $n$ in $\infw{x}$.
\end{proposition}

\begin{lemma}
\label{lem:Rote-seq-stable-reversal}
 A complementation-symmetric Rote sequence is reversal-closed.
\end{lemma}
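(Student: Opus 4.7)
The plan is to pass the reversal-closedness from the underlying Sturmian sequence $\infw{s}$ up to the Rote sequence $\infw{x}$ via the map $\Delta$, using Proposition~\ref{prop:factors-Rote-seq} in both directions. Recall that any Sturmian sequence is reversal-closed (as used in the proof of Theorem~\ref{thm:sturmian}), so $\infw{s}$ enjoys this property for free; the task is to transfer it through $\Delta$.

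The key observation I would establish first is that $\Delta$ commutes with reversal in the following sense: for every binary word $v$ of length $\ge 1$,
\[
\Delta(v^R) = \Delta(v)^R.
\]
This is a one-line computation. If $v = v(0)v(1)\cdots v(n)$, then the $i$th letter of $\Delta(v^R)$ is $(v(n-i-1) - v(n-i)) \bmod 2$, while the $i$th letter of $\Delta(v)^R$ is $(v(n-i) - v(n-i-1)) \bmod 2$, and these agree modulo $2$. This is really the only computational step in the whole argument, and it should be the only mild obstacle: one must be careful with the indexing.

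With that in hand, the proof reduces to a short chase. Let $w$ be an arbitrary factor of $\infw{x}$; I want to show $w^R$ is also a factor of $\infw{x}$. Set $u := \Delta(w)$. By the forward direction of Proposition~\ref{prop:factors-Rote-seq}, $u$ is a factor of $\infw{s}$. Since $\infw{s}$ is Sturmian, it is reversal-closed, so $u^R$ is also a factor of $\infw{s}$. By the identity established above, $\Delta(w^R) = u^R$, so $w^R$ is one of the two preimages of $u^R$ under $\Delta$. Applying the converse direction of Proposition~\ref{prop:factors-Rote-seq} to $u^R$, both such preimages — hence in particular $w^R$ — are factors of $\infw{x}$. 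This shows $\infw{x}$ is reversal-closed, as required.

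Two small sanity remarks: first, the case $|w| \le 1$ is trivial since any single letter is its own reversal, so one can assume $|w| \ge 2$ and so $|u| \ge 1$ when invoking the proposition. Second, no explicit use of complementation-symmetry is needed beyond what is already baked into Proposition~\ref{prop:factors-Rote-seq}; the symmetry is precisely what guarantees that both preimages of a factor of $\infw{s}$ occur as factors of $\infw{x}$, which is exactly the ingredient the argument consumes.
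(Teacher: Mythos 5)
Your proof is correct and follows essentially the same route as the paper's: transfer the reversal-closedness of the underlying Sturmian sequence $\infw{s}=\Delta(\infw{x})$ back to $\infw{x}$ via Proposition~\ref{prop:factors-Rote-seq}. The only cosmetic difference is that you state the commutation $\Delta(v^R)=\Delta(v)^R$ explicitly, whereas the paper leaves it implicit in the observation that $v^R$ must be one of the two $\Delta$-preimages of $u^R$.
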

\begin{proof}
 Let $\infw{x}$ be a complementation-symmetric Rote sequence.
 Let $\infw{s}$ be the Sturmian sequence corresponding to $\infw{x}$, i.e., $\infw{s}=\Delta(\infw{x})$ given by Theorem~\ref{thm:symbolization-Rote}.
 Consider a factor $v$ of $\infw{x}$.
 Write $u=\Delta(v)$.
 Since $\infw{s}$ is reversal-closed, the word $u^R$ is also a factor of $\infw{s}$.
 Let $w$ and $w'$ be the binary words such that $u^R=\Delta(w)=\Delta(w')$ and $w'=E(w)$.
 By Proposition~\ref{prop:factors-Rote-seq}, both $w$ and $w'$ are factors of $\infw{x}$.
 Now observe that we have either $v^R=w$ or $v^R=w'$.
 This ends the proof.
\end{proof}

We compute the reflection complexity of Rote sequences as follows.

\begin{theorem}
 Let $\infw{x}$ be a complementation-symmetric Rote sequence.
 Then its reflection complexity satisfies $r_{\infw{x}}(n)=n+1$ for all $n\ge 0$.
\end{theorem}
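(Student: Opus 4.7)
The plan is to invoke Lemma~\ref{lem:Rote-seq-stable-reversal}, which tells us that $\infw{x}$ is reversal-closed, so that Theorem~\ref{thm:r-and-rho}(b) together with the defining property $\rho_{\infw{x}}(n) = 2n$ yields
\[
r_{\infw{x}}(n) \;=\; \frac{1}{2}\bigl(\rho_{\infw{x}}(n) + \Pal_{\infw{x}}(n)\bigr) \;=\; n + \frac{1}{2}\Pal_{\infw{x}}(n)
\]
for $n \ge 1$; the case $n = 0$ is trivial. The theorem thus reduces to proving that $\Pal_{\infw{x}}(n) = 2$ for every $n \ge 1$.

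To count the palindromic factors of $\infw{x}$, I will transfer the question to the associated Sturmian sequence $\infw{s} = \Delta(\infw{x})$ furnished by Theorem~\ref{thm:symbolization-Rote}. A short calculation from the definition of $\Delta$ (using $-1 \equiv 1 \pmod{2}$) gives the identity $\Delta(v^R) = \Delta(v)^R$ for every binary word $v$. By Proposition~\ref{prop:factors-Rote-seq}, each factor $u$ of $\infw{s}$ of length $n-1$ lifts to exactly two factors $v$ and $E(v)$ of $\infw{x}$ of length $n$; the identity above shows that if $u$ is a palindrome then $\{v, E(v)\}$ consists either of two palindromes (when $v^R = v$, forcing $E(v)^R = E(v)$ as well) or of two ``anti-palindromes'' (when $v^R = E(v)$). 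Telescoping $v(n) - v(1) \equiv |u|_1 \pmod{2}$ shows that the first case occurs precisely when the number of $1$'s in $u$ is even, yielding
\[
\Pal_{\infw{x}}(n) \;=\; 2 \cdot \#\bigl\{u \in \Fac(\infw{s}) \suchthat u = u^R,\ |u| = n - 1,\ |u|_1 \text{ even}\bigr\}.
\]

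It then remains to prove that for every $m \ge 0$ the sequence $\infw{s}$ has exactly one palindromic factor of length $m$ with an even number of $1$'s. For $m = 0$ this is trivial; for $m \ge 2$ even it follows from $\Pal_{\infw{s}}(m) = 1$ combined with the observation that the unique even-length palindrome has even weight since its letters pair up around the center; and for $m = 1$ both $0$ and $1$ occur, with only $0$ of even weight. The main obstacle is the case $m \ge 3$ odd, where $\Pal_{\infw{s}}(m) = 2$ and I must show that the two palindromes have weights of different parity, equivalently different middle letters (since a palindrome of odd length has weight congruent modulo $2$ to its middle letter). I plan to deduce this from the balance property of Sturmian sequences, which forces the weights of length-$m$ factors to lie in two consecutive integers and hence to have opposite parities, together with an argument that both values are realized among palindromic factors; for the latter I expect to appeal either to the richness of Sturmian sequences (via $\Pal_{\infw{s}}(m) + \Pal_{\infw{s}}(m+1) = \rho_{\infw{s}}(m+1) - \rho_{\infw{s}}(m) + 2$) or to the palindromic closure construction that generates Sturmian palindromes. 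Once this point is settled, $\Pal_{\infw{x}}(n) = 2$ for every $n \ge 1$, and hence $r_{\infw{x}}(n) = n + 1$ as desired.
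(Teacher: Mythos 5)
Your overall framework is the same as the paper's: reversal-closedness (Lemma~\ref{lem:Rote-seq-stable-reversal}) plus Theorem~\ref{thm:r-and-rho}(b) and $\rho_{\infw{x}}(n)=2n$ reduce everything to showing $\Pal_{\infw{x}}(n)=2$ for $n\ge 1$. The paper disposes of this last point in one line by citing the known result \cite[Theorem~8]{AlloucheBaakeCassaigneDamanik2003}, whereas you try to reprove it via the correspondence $\infw{s}=\Delta(\infw{x})$. Your reduction is correct and nicely carried out: $\Delta(v^R)=\Delta(v)^R$, each palindromic factor $u$ of $\infw{s}$ lifts to two palindromes of $\infw{x}$ when $|u|_1$ is even and to none when $|u|_1$ is odd, and non-palindromic $u$ contribute nothing; so indeed $\Pal_{\infw{x}}(n)$ is twice the number of even-weight palindromic factors of $\infw{s}$ of length $n-1$, and the cases $m=0$, $m=1$ and $m$ even are fine.

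The genuine gap is the case of odd $m\ge 3$, which is exactly the nontrivial heart of the palindrome-complexity computation for Rote sequences, and you do not prove it: you must show that the two palindromic factors of $\infw{s}$ of odd length $m$ have weights of opposite parity (equivalently, distinct middle letters), and you only say you ``expect to appeal'' to richness or to palindromic closure. Neither tool obviously delivers this. The richness identity $\Pal_{\infw{s}}(m)+\Pal_{\infw{s}}(m+1)=\rho_{\infw{s}}(m+1)-\rho_{\infw{s}}(m)+2$ only counts palindromes; it says nothing about their weights. Balance tells you the weights lie in $\{k,k+1\}$, but it does not exclude that both palindromes are fixed points of reversal inside the same weight class: since reversal preserves weight and acts as an involution on each class, having both palindromes in one class is perfectly consistent with parity counts unless you additionally know, e.g., that the number of maximal-weight factors of odd length $m$ is odd --- a genuine structural fact about Sturmian words that itself requires proof. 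So as written the argument is circular in spirit: the missing claim is essentially equivalent (through your own reduction) to the statement $\Pal_{\infw{x}}(n)=2$ that you set out to prove. To close the gap, either cite \cite[Theorem~8]{AlloucheBaakeCassaigneDamanik2003} directly, as the paper does (which makes the whole detour through $\Delta$ unnecessary), or invoke/prove a precise description of the odd-length palindromic factors of Sturmian words showing their middle letters differ.
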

\begin{proof}
Let $\infw{x}$ be a complementation-symmetric Rote sequence.
 We clearly have $r_{\infw{x}}(0)=1$.
 Now, for $n\ge 1$,~\cite[Theorem~8]{AlloucheBaakeCassaigneDamanik2003} states that $\Pal_{\infw{x}}(n)=2$.
 We finish the proof using Lemma~\ref{lem:Rote-seq-stable-reversal} and Theorem 
 \ref{thm:r-and-rho}(b).
\end{proof}

\section{Rich reversal-closed sequences}
\label{sectionrich}

Rich sequences have several equivalent definitions.
It is known that a word $w$ contains at most $|w|+1$ palindromic factors~\cite{DroubayJustinPirillo2001}.
A sequence is called \emph{rich} if each factor contains the maximal number of palindromic factors.

\begin{theorem}
\label{thm:characterization rich}
 Let $\infw{x}$ be a reversal-closed sequence.
 Then $\infw{x}$ is rich if and only if $r_{\infw{x}}(n+1)+r_{\infw{x}}(n) = \rho_{\infw{x}}(n+1)+1$ for all $n\ge 0$.
\end{theorem}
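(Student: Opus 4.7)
The plan is to reduce the claimed equality to a known characterization of rich sequences in terms of palindrome and factor complexities. Since $\infw{x}$ is reversal-closed, Remark~\ref{rem:folklore-reversal-recurrent} ensures $\infw{x}$ is recurrent, and Theorem~\ref{thm:r-and-rho}(b) gives
\[
r_{\infw{x}}(n) = \tfrac{1}{2}\bigl(\rho_{\infw{x}}(n) + \Pal_{\infw{x}}(n)\bigr)
\quad \text{for all } n\ge 0.
\]
Adding this identity for $n$ and for $n+1$ and multiplying by $2$, the proposed equality $r_{\infw{x}}(n+1) + r_{\infw{x}}(n) = \rho_{\infw{x}}(n+1) + 1$ is equivalent to
\[
\Pal_{\infw{x}}(n+1) + \Pal_{\infw{x}}(n) = \rho_{\infw{x}}(n+1) - \rho_{\infw{x}}(n) + 2.
\]
So the statement reduces to showing that, for a reversal-closed sequence, richness is equivalent to this palindrome--factor identity holding for every $n \ge 0$.

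Next I would recall that Theorem~\ref{thm:Pal-first-diff-rho}(b) always gives the inequality $\Pal_{\infw{x}}(n+1) + \Pal_{\infw{x}}(n) \le \rho_{\infw{x}}(n+1) - \rho_{\infw{x}}(n) + 2$ in the reversal-closed setting, so the issue is exactly when equality is attained for every $n$. Here I would invoke the characterization of rich sequences due to Bucci, De Luca, Glen, and Zamboni (extending work of Droubay, Justin, and Pirillo on the palindromic defect): an infinite recurrent sequence $\infw{x}$ is rich if and only if $\Pal_{\infw{x}}(n+1) + \Pal_{\infw{x}}(n) = \rho_{\infw{x}}(n+1) - \rho_{\infw{x}}(n) + 2$ for all $n \ge 0$. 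Applying this to our recurrent sequence $\infw{x}$ closes the equivalence.

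The forward direction (rich $\Rightarrow$ identity) and the backward direction (identity $\Rightarrow$ rich) then follow by chaining these three ingredients: the reversal-closed formula for $r_{\infw{x}}$, the inequality of Theorem~\ref{thm:Pal-first-diff-rho}(b), and the palindromic characterization of richness. The main obstacle is not any fresh computation but rather citing precisely the right form of the rich-sequence characterization; in particular, one must make sure that the version quoted applies to recurrent (not only uniformly recurrent) sequences, which is exactly the regime that Remark~\ref{rem:folklore-reversal-recurrent} places us in. Once that is in hand, the proof reduces essentially to an algebraic manipulation of the formula in Theorem~\ref{thm:r-and-rho}(b).
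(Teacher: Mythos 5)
Your proposal is correct and follows essentially the same route as the paper: apply the reversal-closed identity $r_{\infw{x}}(n)=\frac{1}{2}(\rho_{\infw{x}}(n)+\Pal_{\infw{x}}(n))$ from Theorem~\ref{thm:r-and-rho}(b), reduce the claimed equality algebraically to $\Pal_{\infw{x}}(n+1)+\Pal_{\infw{x}}(n)=\rho_{\infw{x}}(n+1)-\rho_{\infw{x}}(n)+2$, and invoke the Bucci--De Luca--Glen--Zamboni characterization that richness is exactly equality in Theorem~\ref{thm:Pal-first-diff-rho}(b). Your extra care about the recurrent (rather than uniformly recurrent) setting matches the paper's own use of Remark~\ref{rem:folklore-reversal-recurrent}.
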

\begin{proof}
From~\cite[Theorem~1.1]{BucciDeLucaGlenZamboni2009}, the sequence $\infw{x}$ is rich if and only if the inequality in Theorem~\ref{thm:Pal-first-diff-rho}(b) is an equality.
The result then follows from Theorem~\ref{thm:r-and-rho}(b).
\end{proof}

 Those among binary quasi-Sturmian sequences 
that are codings of rotations are rich, 
see~\cite[Theorem~19]{BMBLV}.

\begin{corollary}\label{cor:quasi-and-rich}
 Let $\infw{x}$ be a binary reversal-closed quasi-Sturmian sequence.
 There exists a constant $C$ such that 
 $r_{\infw{x}}(n+1)+r_{\infw{x}}(n) = n + C$ 
 for $n$ large enough.
\end{corollary}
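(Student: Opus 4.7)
The plan is to reduce this corollary to the combination of Theorem~\ref{thm:characterization rich} with the quasi-Sturmian hypothesis, modulo establishing richness. First, I would use the quasi-Sturmian assumption: there is a constant $C_0$ such that $\rho_{\infw{x}}(n) = n + C_0$ for all $n$ large enough, and in particular $\rho_{\infw{x}}(n+1) + 1 = n + (C_0 + 2)$ for such $n$. Thus it suffices to verify that, for $n$ large enough, one has
\[
r_{\infw{x}}(n+1) + r_{\infw{x}}(n) = \rho_{\infw{x}}(n+1) + 1.
\]

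Next, in view of Theorem~\ref{thm:characterization rich}, this equality would follow (even for all $n\ge 0$) from the fact that $\infw{x}$ is rich, because $\infw{x}$ is already assumed to be reversal-closed. So the heart of the argument is to show that every binary reversal-closed quasi-Sturmian sequence is rich. Here I would invoke the remark preceding the corollary: by the cited result \cite[Thm.~19]{BMBLV}, binary quasi-Sturmian sequences that are codings of rotations are rich, and one can check that a binary reversal-closed quasi-Sturmian sequence falls into this class. Concretely, by the structural description $\infw{x} = y\, f(\infw{z})$ (with $\infw{z}$ Sturmian and $f$ an aperiodic binary morphism) used in Section~\ref{sec:quasi-sturmian}, together with Remark~\ref{rem:folklore-reversal-recurrent} ensuring recurrence, one obtains that $\infw{x}$ is (up to a prefix that does not affect richness for large $n$) a coding of a rotation, hence rich.

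Finally, combining the richness with Theorem~\ref{thm:characterization rich} yields the equality $r_{\infw{x}}(n+1) + r_{\infw{x}}(n) = \rho_{\infw{x}}(n+1) + 1$ for all $n\ge 0$, and substituting the quasi-Sturmian expression for $\rho_{\infw{x}}(n+1)$ gives $r_{\infw{x}}(n+1) + r_{\infw{x}}(n) = n + C$ with $C := C_0 + 2$ for $n$ large enough.

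The main obstacle is the richness step. Theorem~\ref{thm:characterization rich} is ready to use and the quasi-Sturmian substitution is trivial; the nontrivial content is justifying that a binary reversal-closed quasi-Sturmian sequence is rich. This is where the cited characterization in terms of codings of rotations is essential, and one must make sure the reduction to that class is valid in the binary reversal-closed setting (and unaffected by the initial prefix $y$ in the decomposition, since richness is a property of the factor set, equivalently of the shift orbit closure).
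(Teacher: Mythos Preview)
Your proposal is correct and follows essentially the same route as the paper: invoke richness (via the coding-of-rotations remark just before the corollary), apply Theorem~\ref{thm:characterization rich} to get $r_{\infw{x}}(n+1)+r_{\infw{x}}(n)=\rho_{\infw{x}}(n+1)+1$, and then substitute $\rho_{\infw{x}}(n+1)=n+1+C'$ to obtain $C=C'+2$. The paper's proof is in fact terser than yours and does not spell out the richness step at all; you are right to flag that step as the only nontrivial ingredient, and your discussion of the prefix $y$ not affecting the factor set is a reasonable extra care that the paper omits.
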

\begin{proof}
Let $C'$ be a constant such that $\rho_{\infw{x}}(n) = n + C'$ for $n$ large enough.
It is enough to choose $C=C'+2$.
\end{proof}

Using Theorem~\ref{thm:r-and-rho}(b) and~\cite[Corollaries~2.27 and~2.29]{Rukavicka2021}, it is possible to bound the reflection complexity of rich sequences as follows.

\begin{proposition}
 Let $\infw{x}$ be a rich sequence over an alphabet of $q$ letters and write $\delta = \frac{2}{3(\log 3- \log 2)}$.
 Then $r_{\infw{x}}(n) \le \frac{nq}{2} (2q^2n)^{\delta \log n} (1 + nq^3 (2q^2n)^{\delta \log n})$ for all $n\ge 1$.
\end{proposition}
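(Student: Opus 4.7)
The plan is to combine the identity from Theorem~\ref{thm:r-and-rho}(b) with two quasi-polynomial bounds of Rukavicka for rich sequences, one on the palindromic complexity and one on the factor complexity; the proof is essentially an algebraic substitution.

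First, I would note that any rich sequence is reversal-closed, since the language of a rich sequence is a classically known to be stable under reversal. Consequently Theorem~\ref{thm:r-and-rho}(b) applies and yields
\[
r_{\infw{x}}(n) = \tfrac{1}{2}\bigl(\rho_{\infw{x}}(n) + \Pal_{\infw{x}}(n)\bigr).
\]

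Next, I would invoke the two corollaries of Rukavicka quoted in the statement, which provide, for every rich sequence over an alphabet of $q$ letters and for every $n\ge 1$, upper bounds of the shape
\[
\Pal_{\infw{x}}(n) \le nq\,(2q^2 n)^{\delta \log n}
\quad \text{and} \quad
\rho_{\infw{x}}(n) \le n^2 q^4\,(2q^2 n)^{2\delta \log n},
\]
with $\delta = \tfrac{2}{3(\log 3 - \log 2)}$. The smaller bound must be the one on the palindromic complexity, since $\Pal_{\infw{x}} \le \rho_{\infw{x}}$ always.

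Finally, I would substitute these two inequalities into the identity, obtaining
\[
r_{\infw{x}}(n) \le \tfrac{1}{2}\Bigl[nq(2q^2 n)^{\delta \log n} + n^2 q^4(2q^2 n)^{2\delta \log n}\Bigr],
\]
and factor out the common term $\tfrac{nq}{2}(2q^2 n)^{\delta \log n}$. The second summand then equals $\tfrac{nq}{2}(2q^2 n)^{\delta \log n}\cdot nq^3(2q^2 n)^{\delta \log n}$, so the whole expression becomes $\tfrac{nq}{2}(2q^2 n)^{\delta \log n}\bigl(1 + nq^3(2q^2 n)^{\delta \log n}\bigr)$, which is precisely the claimed bound. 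The only ``obstacle'' is bookkeeping: correctly identifying which of Rukavicka's corollaries supplies which bound and aligning the constants; I do not anticipate any conceptual difficulty.
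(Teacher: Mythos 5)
Your route is precisely the one the paper itself indicates: the paper gives no detailed proof, only the instruction to combine Theorem~\ref{thm:r-and-rho}(b) with Rukavicka's Corollaries~2.27 and~2.29, and your algebra (bounding $\Pal_{\infw{x}}(n)$ and $\rho_{\infw{x}}(n)$ separately and factoring out $\tfrac{nq}{2}(2q^2n)^{\delta\log n}$) is exactly the intended bookkeeping; your guessed forms of the two Rukavicka bounds, $\Pal_{\infw{x}}(n)\le nq(2q^2n)^{\delta\log n}$ and $\rho_{\infw{x}}(n)\le n^2q^4(2q^2n)^{2\delta\log n}$, are the ones the target expression forces, though of course the argument stands or falls with the actual statements of those corollaries.

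There is, however, one genuine gap in your justification: it is not true that every rich sequence is reversal-closed. For instance, $01^\omega=0111\cdots$ is rich (its length-$n$ prefix $01^{n-1}$ has exactly $n+1$ palindromic factors, counting the empty word), yet $10$ is not a factor. The correct classical statement (due to Glen, Justin, Widmer and Zamboni) is that a \emph{recurrent} rich sequence has a reversal-closed set of factors. This matters here because Theorem~\ref{thm:r-and-rho}(b) turns $\tfrac12(\rho_{\infw{x}}(n)+\Pal_{\infw{x}}(n))$ into an \emph{upper} bound for $r_{\infw{x}}(n)$ only when there are no unreflected length-$n$ factors; in general this quantity is a lower bound, and the unconditional inequality $r_{\infw{x}}(n)\le\rho_{\infw{x}}(n)$ gives only $n^2q^4(2q^2n)^{2\delta\log n}$, roughly twice the stated bound. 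So as written your argument establishes the proposition for reversal-closed (in particular, recurrent) rich sequences, and an extra word is needed to cover rich sequences with unreflected factors. In fairness, the paper's one-line attribution glosses over exactly the same point, so apart from the unsupported ``classically known'' claim your reconstruction matches the paper's intended proof.
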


Other sequences have a reflection complexity satisfying the equality of Theorem~\ref{thm:characterization rich}.
For instance, it is the case for complementation-symmetric sequences, sequences canonically associated with some specific Parry numbers, and sequences coding particular interval exchange transformations.
For more details, see~\cite[Section~3]{BalaviMasakovaPelantova2007}.

\section{Automatic sequences}\label{sec:automatic}
 In this section, we study the reflection complexity of automatic sequences.
First, in a positional numeration system $U$ having an \emph{adder} (i.e., addition is recognizable by an automaton reading $U$-representations), we show that if a sequence is $U$-automatic, then its reflection complexity is a $U$-regular sequence.
Furthermore we show how to effectively compute a linear representation for the sequence, making use of the free software \texttt{Walnut}~\cite{Mousavi2016,Shallit2023Walnut}.
 Next, we explore the reflection complexity of some famous automatic sequences, namely the Thue--Morse, the period-doubling, generalized paperfolding, generalized Golay--Shapiro, and the Baum--Sweet sequences.

\subsection{Reflection complexity is computably regular}
 We now show that the reflection complexity of an automatic sequence is regular.

\begin{theorem}
Let $U=(U(n))_{n\ge 0}$ be a positional numeration system such that there is an adder, and let $\infw{x}$ be a $U$-automatic sequence. 
Then $(r_{\infw{x}}(n))_{n\ge 0}$ is a $U$-regular sequence.
Furthermore, a linear representation for 
$(r_{\infw{x}}(n))_{n\ge 0}$ is computable from 
the DFAO for $\infw{x}$.
\end{theorem}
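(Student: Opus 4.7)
The plan is to express $r_{\infw{x}}(n)$ as a count of integers $i$ satisfying a first-order formula over the structure $\langle \N, +, V_U, \infw{x} \rangle$ (where $V_U$ is the usual predicate encoding $U$-representations), and then to invoke the standard theorem that such counting functions are $U$-regular with effectively computable linear representation. The hypothesis that $U$ has an adder ensures that $+$ is $U$-recognizable, so that the automaton constructions below actually go through.

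First I would introduce the two auxiliary predicates
\[
\FactorEq(i,j,n) := \forall k\, \bigl(k < n \Rightarrow \infw{x}(i+k) = \infw{x}(j+k)\bigr),
\]
\[
\FactorRevEq(i,j,n) := \forall k\, \bigl(k < n \Rightarrow \infw{x}(i+k) = \infw{x}(j+n-1-k)\bigr),
\]
which assert respectively that the length-$n$ factors of $\infw{x}$ at positions $i$ and $j$ are equal, or are reversals of one another. Since $\infw{x}$ is $U$-automatic and addition is $U$-recognizable, both predicates are first-order definable in $\langle \N, +, V_U, \infw{x}\rangle$, and the corresponding DFAs reading $U$-representations are effectively constructible from the DFAO for $\infw{x}$ by standard product and projection operations.

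Next I would define the novelty predicate
\[
\Novel(i,n) := \forall j\, \bigl(j < i \Rightarrow \neg \FactorEq(i,j,n) \andd \neg \FactorRevEq(i,j,n)\bigr),
\]
which selects, for each equivalence class of length-$n$ factors under $\sim_r$, the leftmost occurrence in $\infw{x}$. Then by construction
\[
r_{\infw{x}}(n) = \#\{i \in \Nz \suchthat \Novel(i,n)\},
\]
and this is a finite count since $r_{\infw{x}}(n) \leq \rho_{\infw{x}}(n) < \infty$ by Theorem~\ref{thm:r-and-rho}. To finish, I would apply the general principle (see \cite{AlloucheShallit2003} for the base-$k$ case and \cite{RigoMaes2002,Shallit1988} for more general positional numeration systems with adders) that if $\varphi(i,n)$ is first-order definable in $\langle \N, +, V_U, \infw{x}\rangle$ and the set $\{i : \varphi(i,n)\}$ is finite for every $n$, then the counting function $n \mapsto \#\{i : \varphi(i,n)\}$ is $U$-regular, with linear representation obtained algorithmically by intersecting the DFA for $\varphi$ with the automaton accepting $U$-representations and then applying the path-counting construction on the resulting trim automaton. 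This is precisely what \texttt{Walnut} implements.

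The main obstacle is ensuring that every logical operation used in building $\Novel$ stays within the fragment whose recognizability is guaranteed. The delicate point is the subtraction $j+n-1-k$ appearing in $\FactorRevEq$: under the adder hypothesis this can be replaced by the equivalent formula $\exists m\, (m + k + 1 = n \andd \infw{x}(i+k) = \infw{x}(j+m))$, which uses only $+$ and equality, so the predicate remains recognizable. Once this is checked, the rest of the argument is routine machinery from the theory of automatic and regular sequences, and effectiveness is immediate.
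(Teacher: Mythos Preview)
Your proof is correct and follows essentially the same approach as the paper: both define the predicates $\FactorEq$, $\FactorRevEq$, and a novelty predicate (called $\RefComp$ in the paper), then count the number of $i$ satisfying it and invoke the standard machinery that such counts are $U$-regular with computable linear representation. Your explicit treatment of the subtraction $j+n-1-k$ via an existential rewrite is a nice addition that the paper leaves implicit.
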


\begin{proof}
Here is a sketch of the proof before we give the details: We
create a first-order logical formula asserting that the factor
$\infw{x}[i..i+n-1]$ is the first occurrence of this factor, or its reversal. Then the number of such $i$ is precisely the reflection complexity at $n$.
From this, we can create
a linear representation for the number of such $i$. 

Now some more details. We define the following logical formulas:
\begin{align}
\FactorEq(i,j,n) &:= \forall t \ (t<n) \implies \infw{x}[i+t]=\infw{x}[j+t] \nonumber\\
\FactorRevEq(i,j,n) &:= \forall t \ (t<n) \implies \infw{x}[i+t]=\infw{x}[(j+n)-(t+1)] \label{eq:three-formulas}\\
\RefComp(i,n) &:= \forall j \ (j<i) \implies ((\neg\FactorEq(i,j,n)) & \nonumber \\
& \andd (\neg\FactorRevEq(i,j,n))). \nonumber 
\end{align}

Now we use the fundamental result on B\"uchi arithmetic to translate each of these formulas to their corresponding automata accepting the base-$U$ representation of those pairs $(i,n)$ making the formula true. Next, we use a basic result to convert
the automaton for $\RefComp$ to the corresponding linear representation computing the reflection complexity. 
\end{proof}

Once we have a linear representation for the reflection complexity, we can easily compute it for 
 a given $n$. Furthermore, we can compare it to a guessed formula, provided that this 
 formula can also be expressed as a linear representation (see~\cite{Shallit2023Walnut}). In the next section we carry this out in detail for a number of famous sequences. 

   First we study the case of the Thue-Morse sequence. We want    to emphasize the fact that the point of this example is   
   {\em not} to reprove things that could be done more easily 
by appealing to existing theorems. It is to illustrate how 
the approach via \texttt{Walnut} can, in principle, carry 
out the various constructions for an arbitrary word, using 
Thue-Morse as an example simple enough where the matrices 
can actually be displayed. 

\medskip

We can compute a linear representation for the reflection complexity $r_{\infw{t}} (n)$ of the $2$-automatic Thue--Morse sequence $\infw{t}$, using the
 same approach as in the preceding section. Here we use the following \texttt{Walnut} code:
\begin{verbatim}
def factoreq_tm "At (t<n) => T[i+t]=T[j+t]"::
def factorreveq_tm "At (t<n) => T[i+t]=T[(j+n)-(t+1)]"::
def rc_tm n "Aj (j<i) => ((~$factoreq_tm(i,j,n)) 
 & (~$factorreveq_tm(i,j,n)))"::
\end{verbatim}
This generates a linear representation of rank $66$, which can be minimized to the following.
$$
 v = \left[\begin{array}{ccccccccc}
1&0&0&0&0&0&0&0&0
\end{array}
\right], \quad 
 w = \left[\begin{array}{ccccccccc}
1& 2& 3& 4& 6& 6&10&10&13
\end{array}
\right]^T
$$
\begin{equation}
\mu(0) = \frac{1}{33} \left[\begin{smallarray}{rrrrrrrrr}
33& 0& 0& 0& 0& 0& 0& 0& 0\\
 0& 0& 33& 0& 0& 0& 0& 0& 0\\
 0& 0& 0& 0& 33& 0& 0& 0& 0\\
 0& 0& 0& 0& 0& 0& 33& 0& 0\\
 0& 0& 0& 0& 0& 0& 0& 0& 33\\
 0& 0&-26& 0& 0& 23& 10&-10& 36\\
 0& 0&-57& 33& 0& 6& -6& 6& 51\\
 0& 0&-79& 33& 33& -5&-28& 28& 51\\
 0& 0&-72& 0& 33& 18&-18& 18& 54
 \end{smallarray}\right], \quad 
 \mu(1) = \frac{1}{33}\left[\begin{smallarray}{rrrrrrrrr}
0& 33& 0& 0& 0& 0& 0& 0& 0\\
 0& 0& 0& 33& 0& 0& 0& 0& 0\\
 0& 0& 0& 0& 0& 33& 0& 0& 0\\
 0& 0& 0& 0& 0& 0& 0& 33& 0\\
 0& 0&-24& 0& 0& 39& -6& 6& 18\\
 0& 0&-40& 0& 0& 43&-10& 10& 30\\
 0& 0&-78& 33& 33& 3&-36& 36& 42\\
 0& 0&-86& 33& 33& 5&-38& 38& 48\\
 0& 0&-72& 0& 0& 51&-18& 18& 54
\end{smallarray}\right].
 \label{eq:tmrep}
\end{equation}

 Recall that Brlek~\cite{Brlek1989}, 
 de Luca and Varricchio~\cite{deLucaVarricchio1989},
 and Avgustinovich~\cite{Avgustinovich1994} 
 independently
gave a simple recurrence for the number of length-$n$ factors of $\infw{t}$, namely $ \rho_{\infw{t}}(2n) = \rho_{\infw{t}}(n) + \rho_{\infw{t}}(n+1)$ and $\rho_{\infw{t}}(2n+1) = 2 \rho_{\infw{t}}(n+1)$ for $n\ge 2$. As it turns out, there is a simple relationship between $r_{\infw{t}}$ and $\rho_{\infw{t}}$.

\begin{theorem}\label{thm:theoremrTM}
Let $\infw{t}$ be the Thue--Morse sequence.
\begin{itemize}
 \item[(a)]
For all $n\ge 0$, we have $r_{\infw{t}}(2n+1) = \rho_{\infw{t}}(n + 1)$.
 \item[(b)]
 For all $n \geq 2$, we have 
\[ r_{\infw{t}}(2 n) = \begin{cases} 
 \rho_{\infw{t}}(n+1) + 1, & \text{if $\exists m\ge 0$ with $3 \cdot 4^{m-1} + 1 \leq n \leq 4^{m}$}; \\ 
 \rho_{\infw{t}}(n+1), & \text{otherwise}. 
 \end{cases} 
\]
\item[(c)] There is an automaton of $14$ states that 
computes the first difference 
$(r_{\infw{t}}(n+1)-r_{\infw{t}}(n))_{n\ge 0}$.
\end{itemize}
\end{theorem}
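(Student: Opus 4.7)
The plan is to establish all three parts by leveraging the explicit linear representation~(\ref{eq:tmrep}) for $r_{\infw{t}}$, the classical recurrences $\rho_{\infw{t}}(2n) = \rho_{\infw{t}}(n) + \rho_{\infw{t}}(n+1)$ and $\rho_{\infw{t}}(2n+1) = 2\rho_{\infw{t}}(n+1)$ (valid for $n \ge 2$), and the fact that $\infw{t}$ is reversal-closed with a well-understood palindrome complexity. Each identity can be stated as an equality between explicit $2$-regular sequences and verified mechanically in \texttt{Walnut}, but each also admits a combinatorial explanation through Theorem~\ref{thm:r-and-rho}(b), which gives $r_{\infw{t}}(n) = \tfrac{1}{2}(\rho_{\infw{t}}(n) + \Pal_{\infw{t}}(n))$.

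For part~(a), inserting the reversal-closed formula and the odd recurrence for $\rho_{\infw{t}}$ reduces the identity to $\Pal_{\infw{t}}(2n+1) = 0$ for $n \ge 2$, i.e., the fact that $\infw{t}$ has no palindromic factor of odd length $\ge 5$. This is a known consequence of the palindrome complexity of $\infw{t}$ established in~\cite{AlloucheBaakeCassaigneDamanik2003}; the small cases $n = 0, 1$ can be checked by hand from the explicit values of $r_{\infw{t}}$. Alternatively, one constructs a linear representation for $n \mapsto r_{\infw{t}}(2n+1)$ from~(\ref{eq:tmrep}) (replace the right vector $w^T$ by $\mu(1)\, w^T$) and a linear representation for $n \mapsto \rho_{\infw{t}}(n+1)$ from the classical recurrences, and then invokes the standard decision procedure for equality of $2$-regular sequences.

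For part~(b), the same reduction via reversal-closedness together with the even recurrence transforms the claim into the identity
\[
\Pal_{\infw{t}}(2n) \;=\; \rho_{\infw{t}}(n+1) - \rho_{\infw{t}}(n) \;+\; 2\cdot[n \in S] \qquad (n \ge 2),
\]
where $S = \{n : \exists\, m \ge 0,\ 3\cdot 4^{m-1}+1 \le n \le 4^m\}$. The set $S$ is $2$-recognizable: its base-$2$ representations consist of the binary strings of even length $2m \ge 2$ beginning with $11$ but not equal to $11\,0^{2m-2}$, together with the strings of the form $1\,0^{2m}$. A small DFA for $S$ is easily produced, after which the identity becomes a first-order statement about $2$-automatic sequences that \texttt{Walnut} discharges directly by comparing linear representations. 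The existence of the $\pm 1$ correction on $S$ is thus traced back to a specific bump in the even-length palindrome complexity of $\infw{t}$ at lengths tied to powers of $4$.

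Part~(c) is then immediate. The first-difference sequence $(r_{\infw{t}}(n+1) - r_{\infw{t}}(n))_{n \ge 0}$ is $2$-regular by closure of regular sequences under index shifts and subtraction, and parts~(a)--(b) together with the boundedness of $\rho_{\infw{t}}(n+1) - \rho_{\infw{t}}(n)$ show that it takes only finitely many values; hence it is $2$-automatic. Feeding~(\ref{eq:tmrep}) into \texttt{Walnut}'s DFAO-minimization routine then produces a minimal automaton whose state count is~$14$. The principal obstacle is pinpointing the exceptional set $S$ in part~(b): once $S$ is correctly guessed, everything else is mechanical, but discovering its precise form from numerical data and justifying it is where the substantive combinatorial content of the theorem lives.
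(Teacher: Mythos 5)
Your proposal is correct in substance and its computational backbone is the same as the paper's: the paper proves (a) and (b) by building linear representations (from the \texttt{Walnut}-generated representation~\eqref{eq:tmrep}, from a representation of $\rho_{\infw{t}}(n+1)$, and, for (b), from the Iverson bracket of the exceptional set) and minimizing the difference to the zero representation, and proves (c) by the ``semigroup trick'' applied to the linear representation of the first difference. Where you genuinely diverge is in the framing: you route the identities through Theorem~\ref{thm:r-and-rho}(b), using that $\infw{t}$ is reversal-closed so that $r_{\infw{t}}(n)=\tfrac12(\rho_{\infw{t}}(n)+\Pal_{\infw{t}}(n))$, together with the Brlek--de Luca--Varricchio--Avgustinovich recurrences; this reduces (a) to the absence of odd palindromes of length $\ge 5$ in $\infw{t}$ and (b) to an explicit formula for $\Pal_{\infw{t}}(2n)$ with a $+2$ bump on the set $S$, whose base-$2$ description you give correctly. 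This buys a combinatorial explanation of \emph{why} the formulas hold (the exceptional set in (b) is exactly a bump in even-length palindrome complexity), which the paper's purely mechanical proof does not provide, and for (a) it even yields a proof independent of \texttt{Walnut} once the palindromic structure of $\infw{t}$ is granted; the paper's approach, by contrast, is uniform, needs no prior knowledge of $\Pal_{\infw{t}}$ or of reversal-closedness, and is fully certified by the decision procedure. Two small caveats on your version: the precise palindrome complexity of $\infw{t}$ is not really ``established in'' \cite{AlloucheBaakeCassaigneDamanik2003} in the form you need (though the fact that odd palindromes of length $\ge 5$ do not occur is an easy consequence of overlap-freeness and the $\{01,10\}$-block structure, and reversal-closedness should be justified, e.g., via Remark~\ref{rem:folklore-reversal-recurrent}); and in (c) one does not feed a linear representation into a ``DFAO-minimization routine''---the correct mechanism, used in the paper, is the semigroup trick of \cite[\S 4.11]{Shallit2023Walnut}, which both certifies boundedness of the difference and produces the $14$-state automaton, your observation that a bounded $2$-regular sequence is $2$-automatic being the theoretical justification behind it.
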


\begin{proof}
We prove each item separately.
\begin{itemize}
 \item[(a)]
 Above in Equalities~\eqref{eq:tmrep} we computed a linear representation
 for $r_{\infw{t}}(n)$. From this linear
 representation we can easily compute
 one for $r_{\infw{t}}(2n+1)$ merely by replacing
 $w$ with $\mu(1) w$. (Indeed, base-$2$ representations of integers $2n+1$ all end with $1$.)

 Next, we can compute a linear representation for $\rho_{\infw{t}} (n+1)$ using the following \texttt{Walnut} command.
 \begin{verbatim}
def sc_tm_offset n "Aj (j<i) => ~$factoreq_tm(i,j,n+1)":
\end{verbatim}
 This creates a linear representation of rank $6$. 

 Finally, we use a block matrix construction to compute a linear representation for the difference $r_{\infw{t}}(2n+1) -\rho_{\infw{t}}(n + 1)$ and minimize it; the result is the $0$ representation. This computation gives a rigorous proof of item (a).
 
 \item[(b)] This identity can be proven in a similar way. We form the linear representation for $$r_{\infw{t}} (2n) - \rho_{\infw{t}} (n+1) - [\exists m \colon 3 \cdot 4^{m-1} + 1 \leq n \leq 4^{m}],$$
 where the last term uses the Iverson bracket.
 We then minimize the result and obtain the
 $0$ representation.

 \item[(c)] We can compute a linear representation for the first difference
 $(r_{\infw{t}}(n+1)-r_{\infw{t}}(n))_{n\ge 0}$, and then
 use the ``semigroup trick'' \cite[Section~4.11]{Shallit2023Walnut} to prove that the difference is bounded and find the automaton for it. It is displayed in Figure~\ref{aut7}.
 \begin{figure}[H]
 \begin{center}
 \includegraphics[width=5.2in]{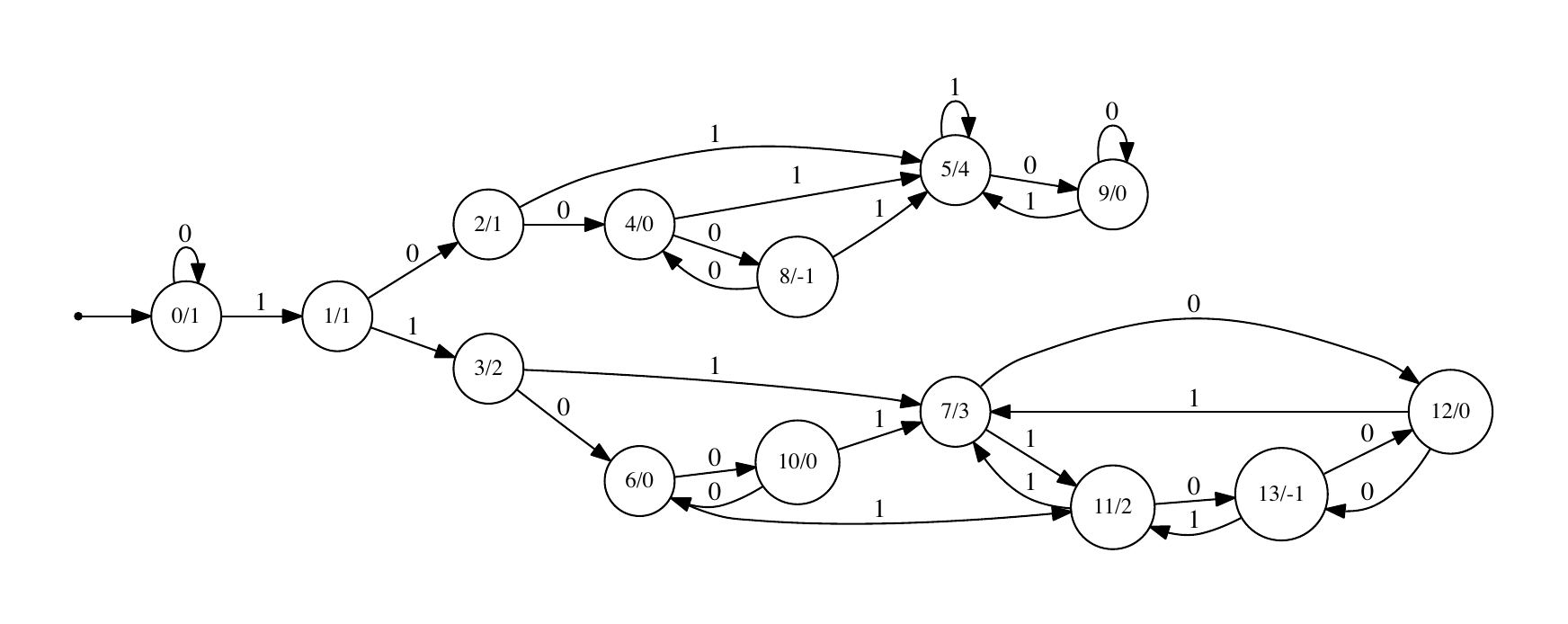}
 \end{center}
 \caption{Automaton computing $(r_{\infw{t}}(n+1)-r_{\infw{t}}(n))_{n\ge 0}$ where $\infw{t}$ is the Thue--Morse sequence.}
 \label{aut7}
 \end{figure}
 \end{itemize}
 These computations rigorously prove the three items of the claim.
\end{proof}
 
 A similar approach can be used to evaluate the reflection complexity for the \emph{period-doubling sequence} $\infw{p}$, 
 which may be defined 
 as the fixed point 
 of the morphism $0 \mapsto 01$ and $1 \mapsto 00$. 
 This gives us that $\infw{p}$ is $2$-automatic as well.
 By analogy with 
 Theorem~\ref{thm:theoremrTM}, we can 
 show that 
 for all $n\ge 0$, we have $r_{\infw{p}}(2n+1) = \rho_{\infw{p}}(n) + 1$, 
and, for all $n \geq 2$, we have 
\[ 
 r_{\infw{p}}(2 n) = 
 \begin{cases} 
 \rho_{\infw{p}}(n+1) - 1, &\text{if $\exists m
 \ge 0$ with $3 \cdot 2^{m-1} \leq n \leq 2^{m+1} - 1$}; \\ 
 \rho_{\infw{p}}(n+1) - 2, &\text{otherwise}, 
 \end{cases} 
\]
 and we may similarly devise an analogue of part (c) 
 of Theorem \ref{thm:theoremrTM}. 
 
 A \emph{paperfolding sequence} $\infw{p}_{\infw{f}}$ is a 
 binary sequence $p_1 p_2 p_3 \cdots$
specified by a sequence of
\emph{binary unfolding instructions} $f_0 f_1 f_2 \cdots$,
as the limit of the sequences 
$\infw{p}_{f_0 f_1 f_2 \cdots}$, defined as follows:
\[
 \infw{p}_{\eps} = \eps
 \quad \text{and} \quad
 \infw{p}_{f_0 \cdots f_{i+1}} = \infw{p}_{f_0 \cdots f_i} \ f_{i+1} \ E(\infw{p}_{f_0 \cdots f_i}^R) \; \text{ for all }i\ge 0
\]
where $E$ is the exchange morphism.
For example, if $\infw{f} = 000\cdots$, we get
the simplest paperfolding sequence
$$\infw{p} = 0010011000110110001001110011011 \cdots.$$
Note that a paperfolding sequence is $2$-automatc if and only if the sequence of unfolding instructions is eventually periodic~\cite[Theorem~6.5.4]{AlloucheShallit2003}.

Allouche~\cite{Allouche1997}, and later, Baake~\cite{Baake1999} proved that no paperfolding sequence
contains a palindrome of length $>13$. In fact, even more is true as shown below.
\begin{proposition}
No paperfolding sequence contains a reflected factor of length $>13$.
\label{prop:pf13}
\end{proposition}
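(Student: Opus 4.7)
The strategy is to reduce the statement to a finite verification on a single, specific paperfolding sequence. The first step is the reduction: it rests on the classical fact, central to the Allouche/Baake palindrome bound quoted just before the proposition, that \emph{all} paperfolding sequences $\infw{p}_{\infw{f}}$ share the same set of finite factors, independent of the unfolding instructions $\infw{f}$. Since whether a factor $w$ is reflected depends only on whether $w^R$ belongs to the factor set, the assertion for all paperfolding sequences reduces to the same assertion for one of them. I would take the regular paperfolding sequence $\infw{p}$ obtained from $\infw{f} = 000\cdots$, which is $2$-automatic and therefore directly accessible to \texttt{Walnut}.

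For the second step, reuse the $\FactorRevEq$ predicate from~\eqref{eq:three-formulas}, now applied to $\infw{p}$, and form the formula
\[
\textsc{HasRefl}(n) := \exists i,\, j \; \FactorRevEq(i,j,n).
\]
By construction, $\textsc{HasRefl}(n)$ holds if and only if there is a length-$n$ factor $w$ of $\infw{p}$ whose reversal $w^R$ is also a factor (palindromes being captured by $i=j$). Invoking B\"uchi's theorem, \texttt{Walnut} produces a finite automaton $\mathcal{A}$ accepting the base-$2$ representations of all such $n$. The task then reduces to checking that $\mathcal{A}$ accepts exactly $\{0, 1, \ldots, 13\}$, which is a routine \texttt{Walnut} computation carried out in the same spirit as the Thue--Morse calculations of the previous section.

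The main obstacle is the common-factor-set reduction. If this property is not directly available in a form one can cite, it must be re-derived from the defining recursion
\[
\infw{p}_{f_0 \cdots f_{i+1}} = \infw{p}_{f_0 \cdots f_i} \; f_{i+1} \; E(\infw{p}_{f_0 \cdots f_i}^R).
\]
The key observation is that $\infw{p}_{f_0 \cdots f_i}$ and $E(\infw{p}_{f_0 \cdots f_i}^R)$ have the same factor set, so concatenation at level $i+1$ only produces factors of that same set together with a controlled amount of boundary material determined by $f_{i+1}$. An induction on $i$ then shows that, for any fixed factor length, the set of factors appearing in $\infw{p}_{\infw{f}}$ stabilizes and is independent of $\infw{f}$. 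Once this reduction is granted, the proposition follows from the \texttt{Walnut} verification above, and in particular also recovers (and strengthens) the palindrome bound of Allouche and Baake, since every palindrome is a reflected factor.
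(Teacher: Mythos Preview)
Your reduction rests on the claim that all paperfolding sequences $\infw{p}_{\infw{f}}$ have the \emph{same} set of factors, independently of $\infw{f}$. This claim is false. What Allouche proved is that the factor \emph{complexity function} is the same for all paperfolding sequences; the factor \emph{sets} themselves are not. A concrete counterexample already appears at length~$5$: the regular paperfolding sequence $\infw{p}_{0^\omega}$ has $00010$ among its $18$ length-$5$ factors but not $11101$, while $\infw{p}_{1^\omega}=E(\infw{p}_{0^\omega})$ has $11101$ but not $00010$. (More generally, applying $E$ to the unfolding instructions applies $E$ to the sequence, so the two factor sets are $E$-images of each other rather than equal.) Consequently, a \texttt{Walnut} verification carried out on the single $2$-automatic sequence $\infw{p}_{0^\omega}$ says nothing about the uncountably many paperfolding sequences whose factor sets differ from it; your reduction collapses.

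Your fallback sketch for proving the common-factor-set claim is also broken at the outset: the ``key observation'' that $\infw{p}_{f_0\cdots f_i}$ and $E(\infw{p}_{f_0\cdots f_i}^R)$ have the same factor set fails already for $i=0$, where the two words are the single letters $f_0$ and $\overline{f_0}$. The paper's proof avoids this entirely. It first reduces to length exactly~$14$ by the trivial observation that any longer reflected factor contains a reflected factor of length~$14$; it then invokes the appearance function (every length-$14$ factor of any $\infw{p}_{\infw{f}}$ occurs in the prefix of length~$109$, which is determined by $f_0,\ldots,f_6$) to reduce to a finite check over the $2^7=128$ possible prefixes. A \texttt{Walnut}-style argument covering all paperfolding sequences at once \emph{is} possible, but it requires the universal paperfolding automaton that takes the unfolding instructions as an additional input---not the single-sequence reduction you propose.
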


\begin{proof}
It suffices to show that no paperfolding sequence contains a reflected factor of length $14$. For if this holds, but there is a longer reflected factor $x$,
we could write $x = yz$ where $|y| = 14$. Then
$x^R = z^R y^R$, so $y$ would be a reflected factor of 
length $14$, a contradiction.

Now, by a known result on the appearance function of paperfolding sequences~\cite[Theorem~12.2.1]{Shallit2023Walnut}, we know that every length-$14$ factor of a paperfolding sequence $\infw{p}_{\infw{f}}$ appears in a prefix of length $109$, which is in turn specified by the first 7 unfolding instructions. We can then simply examine each of the $56$ length-$14$ factors of these $128$ (finite) words and verify that no factor is reflected. 
\end{proof}
 
 A \emph{generalized Golay--Shapiro sequence} $\infw{g}$ is defined by
taking the running sum, modulo $2$, of a paperfolding sequence $\infw{p}_{\infw{f}}$. The famous \emph{Golay--Shapiro sequence } (also called the \emph{Rudin--Shapiro sequence})~\cite{Golay:1949,Golay:1951,Rudin:1959,Shapiro:1952}
corresponds to the case of unfolding instructions
$0(01)^\omega$~\cite[Definition~6]{AlloucheBaakeCassaigneDamanik2003}.
Note that a generalized Golay--Shapiro sequence is
$2$-automatic if and only if its corresponding generalized paperfolding sequence is $2$-automatic.

 The following analogue of 
 Proposition~\ref{prop:pf13}
 can be proved like Proposition~\ref{prop:pf13}, 
 by appealing to a known result on the recurrence 
 function of generalized Golay--Shapiro sequences 
\cite[Proposition~4.1]{Allouche&Bousquet-Melou:1994b},
 giving that 
 every length-$15$ factor of a paperfolding sequence $\infw{p}_{\infw{f}}$ appears in a prefix of length $2408$, which is in turn specified by the first 12 unfolding instructions. 
 
\begin{proposition} \label{prop:gs15}
 No generalized Golay--Shapiro sequence contains a reflected factor of length $>14$.
\end{proposition}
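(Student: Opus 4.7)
The plan is to reduce Proposition~\ref{prop:gs15} directly to Proposition~\ref{prop:pf13} via the first-difference operator $\Delta$ defined in Section~\ref{sec:Rote}. Since $\infw{g}$ is the running sum modulo~$2$ of its underlying paperfolding sequence $\infw{p}_{\infw{f}}$, the defining relation gives immediately that $\Delta(\infw{g}) = \infw{p}_{\infw{f}}$. Consequently, if $w = \infw{g}[k..k+n-1]$ is any length-$n$ factor of $\infw{g}$ with $n \ge 2$, then $\Delta(w) = \infw{p}_{\infw{f}}[k..k+n-2]$ is a length-$(n-1)$ factor of $\infw{p}_{\infw{f}}$.

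Next, I would verify the identity $\Delta(w^R) = \Delta(w)^R$ for every binary word $w$ of length at least $2$. Writing $w = w(0) w(1) \cdots w(n-1)$, a direct computation shows that the $i$-th letter of $\Delta(w^R)$ equals $(w(n-i-2) - w(n-i-1)) \bmod 2$, while the $i$-th letter of $\Delta(w)^R$ equals $(w(n-i-1) - w(n-i-2)) \bmod 2$; these agree because $-1 \equiv 1 \pmod{2}$.

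Combining these two facts, if both $w$ and $w^R$ are factors of $\infw{g}$, then both $\Delta(w)$ and $\Delta(w^R) = \Delta(w)^R$ are factors of $\infw{p}_{\infw{f}}$; in other words, $\Delta(w)$ is a reflected factor of $\infw{p}_{\infw{f}}$. If $|w| > 14$, then $|\Delta(w)| = |w| - 1 > 13$, contradicting Proposition~\ref{prop:pf13}, which forbids any reflected factor of length greater than $13$ in a paperfolding sequence. Hence no such $w$ exists.

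The main obstacle lies not in technical difficulty but in recognizing that this slick reduction via $\Delta$ is available at all: one might initially expect to mimic the proof of Proposition~\ref{prop:pf13} by a finite enumeration over the $2^7$ possible length-$7$ unfolding-instruction prefixes (together with the two possible initial values of $\infw{g}$). The cleaner argument above sidesteps that enumeration entirely, and the only subtle step is the sign-cancellation identity $\Delta(w^R) = \Delta(w)^R$, which holds precisely because we work modulo~$2$.
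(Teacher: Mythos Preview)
Your argument is correct and takes a genuinely different route from the paper. The paper proceeds exactly as you anticipated in your final paragraph: it first reduces to length~$15$, then invokes a known bound on the recurrence function of generalized Golay--Shapiro sequences (every length-$15$ factor appears in a prefix of length~$2408$, determined by the first $12$ unfolding instructions), and finally checks by explicit enumeration that none of the $60$ length-$15$ factors of these $2^{12}$ prefixes is reflected. Your reduction via $\Delta$ is considerably cleaner: it eliminates the enumeration entirely and shows that Proposition~\ref{prop:gs15} is a formal corollary of Proposition~\ref{prop:pf13}, the key observations being that $\Delta(\infw{g})$ has the same factor set as $\infw{p}_{\infw{f}}$ and that $\Delta$ commutes with reversal over $\mathbb{Z}/2\mathbb{Z}$. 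The paper's approach has the minor advantage of being self-contained (it does not rely on Proposition~\ref{prop:pf13}, which itself rests on an enumeration), but your argument makes the logical dependence between the two propositions transparent and would apply verbatim to any sequence obtained as a running sum modulo~$2$.
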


 We can now prove the following result.

\begin{theorem}
Let $\infw{g}$ be a generalized Golay--Shapiro sequence.
\begin{itemize}
 \item[(a)] For all $n \geq 15$, we have $r_{\infw{g}}(n) = 
 \rho_{\infw{g}} (n) = 8n-8$.
 \item[(b)] The reflection complexity of every generalized Golay--Shapiro sequence is the same, and takes the values
 $2$, $3$, $6$, $10$, $14$, $22$, $30$, $42$, $48$, $62$, $72$, $83$, $92$, $103$
 for $n \in [1,14]$.
\end{itemize}
\end{theorem}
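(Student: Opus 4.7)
The plan is to mirror the preceding paperfolding theorem almost verbatim, substituting Proposition~\ref{prop:gs15} for Proposition~\ref{prop:pf13}. For part~(a), Proposition~\ref{prop:gs15} asserts that no generalized Golay--Shapiro sequence $\infw{g}$ contains a reflected factor of length exceeding $14$. Consequently, for every $n \geq 15$ and every length-$n$ factor $w$ of $\infw{g}$, we have $w^R \notin \Fac(\infw{g})$, so $\Refl_{\infw{g}}(n) = \Pal_{\infw{g}}(n) = 0$ and the first of Equalities~\eqref{eq:combinationURP} collapses to
\[
r_{\infw{g}}(n) \;=\; \Unr_{\infw{g}}(n) \;=\; \rho_{\infw{g}}(n).
\]
One then invokes the known factor-complexity formula $\rho_{\infw{g}}(n) = 8n - 8$, established for the classical Rudin--Shapiro sequence by Allouche and Bousquet-M\'elou~\cite{Allouche&Bousquet-Melou:1994b} and extending to the full family of generalized Golay--Shapiro sequences. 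This handles part~(a) provided the formula is already in force at $n = 15$; any transitional values can be absorbed into the enumeration performed for part~(b).

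For part~(b), the values with $n \geq 15$ are an immediate consequence of~(a), since $8n-8$ takes the values $112, 120, \ldots$ there. For $1 \leq n \leq 14$ I would reuse the same recurrence estimate from~\cite[Proposition~4.1]{Allouche&Bousquet-Melou:1994b} that underlies Proposition~\ref{prop:gs15}: every length-$15$ factor of $\infw{g}$ occurs in any length-$2408$ window, and \emph{a fortiori} so does every length-$n$ factor with $n \leq 14$. Since a length-$2408$ window is determined by only the first $12$ unfolding instructions, I would iterate over all $2^{12} = 4096$ possible instruction prefixes, compute the length-$n$ factors of each resulting finite word, and tabulate them up to the equivalence $\sim_r$. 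The observation that the resulting counts do not depend on the chosen instructions simultaneously certifies the ``every'' part of the statement and produces the claimed list $2, 3, 6, 10, 14, 22, 30, 42, 48, 62, 72, 83, 92, 103$.

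The conceptual content is entirely carried by Proposition~\ref{prop:gs15} together with the known factor complexity formula, so the main obstacle is logistical rather than mathematical: it is the finite enumeration underlying part~(b), which must sweep through several thousand finite prefixes and check that the length-$n$ factor set modulo $\sim_r$ is independent of the unfolding instructions. A secondary, minor issue is making sure the identity $\rho_{\infw{g}}(n) = 8n - 8$ really is valid already from $n = 15$ (rather than from some larger threshold) for every choice of unfolding instructions; if the reference bound starts only at a larger value of $n$, the small intermediate cases fall back to the same enumeration used in~(b), so no additional argument is required.
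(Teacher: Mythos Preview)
Your proposal is correct and follows essentially the same approach as the paper: part~(a) combines Proposition~\ref{prop:gs15} with the factor-complexity formula of Allouche and Bousquet-M\'elou, and part~(b) is handled by the finite enumeration over the $2^{12}$ instruction prefixes determining the length-$2408$ prefix. Your write-up is in fact more detailed than the paper's own proof.
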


\begin{proof}
We prove each item separately.
\begin{itemize}
 \item[(a)]
For $n \geq 15$, the result follows from 
combining the results of
Allouche and Bousquet-Melou~\cite{Allouche&Bousquet-Melou:1994b} and
Proposition~\ref{prop:gs15}.

\item[(b)] The result for $n \geq 15$ follows from Item (a). For $n < 15$ the result can be verified by enumeration of all length-$2408$ prefixes of paperfolding sequences specified by instructions of length $12$.
\end{itemize}
This ends the proof.
\end{proof} 

Let the \emph{Baum--Sweet sequence}
 $$ \infw{b} = (b(n))_{n\ge 0} = 1101100101001001100100000100100101001001\cdots $$
 be defined by $b(0) = 1$ and for $n\ge 1$, $b(n)$ 
 is $1$ if the base-2 expansion of $n$ contains 
 no block of successive zeros of odd length 
 and $0$ otherwise. 
 It is $2$-automatic as well.
 The factor complexity function for 
 $ \infw{b} $ starts with
\begin{equation}\label{eq:BaumSweetrho} 
 (\rho_{\infw{b}}(n))_{n\ge 0} 
 = 1, 2, 4, 7, 13, 17, 21, 27, 33, 38, 45, 52, 59, 65, 70, \ldots 
\end{equation}
 and the reflection complexity function for $ \infw{b} $ 
 with
\begin{equation}\label{eq:BaumSweetr}
 (r_{\infw{b}}(n) )_{n\ge 0} 
 = 1, 2, 3, 5, 8, 11, 13, 17, 21, 25, 30, 35, 40, 46, 50, 56, \ldots. 
\end{equation}

We can again compute a linear representation for $(r_{\infw{b}} (n))_{n\ge 0}$ using the following \texttt{Walnut} code:
\begin{verbatim}
def factoreq_bs "At (t<n) => BS[i+t]=BS[j+t]"::
def factorreveq_bs "At (t<n) => BS[i+t]=BS[(j+n)-(t+1)]"::
def rc_bs n "Aj (j<i) => ((~$factoreq_bs(i,j,n)) 
 & (~$factorreveq_bs(i,j,n)))"::
\end{verbatim}
This gives us a linear representation of rank $90$. From this linear representation, a computation proves the following result.

\begin{corollary}\label{seconddiffBaumSweet}
Let $\infw{b}$ be the Baum--Sweet sequence.
Then the first difference of the sequence $r_{\infw{b}} (n)$ is $2$-automatic, over the alphabet $\{ 1,2, \ldots, 8 \}$.
\end{corollary}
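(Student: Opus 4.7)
The plan is to mirror the method used for the Thue--Morse sequence in Theorem~\ref{thm:theoremrTM}(c), leveraging the rank-$90$ linear representation of $r_{\infw{b}}(n)$ that the preceding \texttt{Walnut} code produces. From this representation, I would first construct a linear representation for the shifted sequence $r_{\infw{b}}(n+1)$ (for example by feeding the base-$2$ representation of $n+1$ through the representation, which corresponds in \texttt{Walnut} to a straightforward predicate rewrite using an adder on the argument). Combining it with the original representation via a block-diagonal morphism $\mu'(a) = \mu(a) \oplus \mu(a)$ and taking the difference of the output covectors yields a linear representation of the first difference $f(n) := r_{\infw{b}}(n+1) - r_{\infw{b}}(n)$.

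Next, I would apply the semigroup trick from \cite[\S 4.11]{Shallit2023Walnut}. The idea is that $f$ is $2$-automatic if and only if the set $\{v \cdot \mu'(w) : w \in \{0,1\}^*\}$ (i.e., the forward orbit of the initial row vector under the matrix-morphism action) is finite. One enumerates this orbit by breadth-first exploration: start with $v$, repeatedly multiply on the right by $\mu'(0)$ and $\mu'(1)$, and test reachable vectors for equality against those already seen. If the orbit stabilizes (which must happen here, since \cite{Brlek1989} and the general theory guarantee $\rho_{\infw{b}}(n+1) - \rho_{\infw{b}}(n)$ is bounded, and Theorem~\ref{thm:weak-conj:growthconj2-3} and the boundedness of $\Pal_{\infw{b}}$ together imply the same for $f$), then each orbit element becomes a state, transitions are induced by $\mu'(0)$ and $\mu'(1)$, and the output at each state is the inner product with the fixed column vector. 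Minimizing the resulting DFAO and reading off the output alphabet should give the claimed bound $\{1, 2, \ldots, 8\}$.

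The main obstacle is the sheer size of the computation: starting from rank $90$ and passing through the block and shift constructions pushes the dimensions up considerably, so the orbit enumeration may visit many vectors before closing. In practice this is handled by \texttt{Walnut}'s minimization routines, but one must be careful that the representation for $r_{\infw{b}}(n+1)$ is built correctly (in particular, that the shift by one in base $2$ is encoded through the adder predicate rather than by an ad hoc matrix manipulation). A secondary subtlety is verifying that the alphabet is exactly $\{1, \ldots, 8\}$ rather than merely contained in it; this is confirmed by inspecting the finite list of output values attached to the states of the minimized automaton, and by checking via a \texttt{Walnut} query that each of the eight values is actually attained for some $n$.
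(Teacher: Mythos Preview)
Your approach is essentially the paper's: the paper simply states that ``from this linear representation, a computation proves the following result,'' and the computation you describe (build a linear representation for the first difference, then apply the semigroup trick of \cite[\S 4.11]{Shallit2023Walnut}) is exactly the one used for Thue--Morse in Theorem~\ref{thm:theoremrTM}(c) and implicitly here.

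One point deserves correction. Your parenthetical appeal to Theorem~\ref{thm:weak-conj:growthconj2-3} to argue a priori that $f(n)=r_{\infw{b}}(n+1)-r_{\infw{b}}(n)$ is bounded is not valid: that theorem requires $\infw{x}$ to be reversal-closed, and the Baum--Sweet sequence is not. Indeed, if it were, Theorem~\ref{thm:r-and-rho}(b) would give $\Pal_{\infw{b}}(n)=2r_{\infw{b}}(n)-\rho_{\infw{b}}(n)$, which at $n=13$ yields $2\cdot 46-65=27$, contradicting the bound $\Pal_{\infw{b}}(13)+\Pal_{\infw{b}}(12)\le \rho_{\infw{b}}(13)-\rho_{\infw{b}}(12)+2=8$ of Theorem~\ref{thm:Pal-first-diff-rho}(b). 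Fortunately this does not damage the argument: the semigroup trick does not need boundedness as an input hypothesis. If the orbit enumeration terminates, that termination \emph{is} the proof of automaticity (and hence boundedness); no external guarantee is required.
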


\section{Further directions}\label{sec:conclusion}
 We conclude the paper by considering some further research directions to pursue in relation to 
 reflection complexities of sequences and by raising some open problems.

 We encourage further explorations of the evaluation 
of $r_{\infw{x}}$ for sequences $\infw{x}$ for which 
properties of $\Pal_{\infw{x}}$ and/or $\rho_{\infw{x}}$ 
are known, especially if \texttt{Walnut} cannot be used 
directly in the investigation of $r_{\infw{x}}$. 
 For example, by letting the \emph{Chacon sequence} 
$\infw{c}$ be the fixed point of the morphism 
$0 \mapsto 0010$ and $1 \mapsto 1$, it is known that 
$\Pal_{\infw{c}}(n)=0$ for all $n\ge 13$. Also, its 
factor complexity satisfies $\rho_{\infw{c}}(n)=2n-1$ 
for $n\ge 2$~\cite{Ferenczi1995}.
We have
\[
(r_{\infw{c}}(n))_{n\ge 0} = 1, 2, 2, 4, 4, 6, 7, 10, 11, 14, 16, 20, 23, 25, 27, 29, 31, 33, \ldots.
\]
The sequence $\infw{c}$ is not automatic in a given so-called 
\emph{addable} numeration system (i.e., where there is an adder). 
Therefore, we cannot use \texttt{Walnut}, in this case. 
 However, an inductive argument can be applied to prove  that 
 $r_{\infw{c}}(n)=\rho_{\infw{c}}(n)$ for all $n\ge 13$. 

 We propose the following natural questions.

 \begin{question}
 To what extent can the reflection complexity
 be used to discriminate between different families of sequences, by analogy with our characterizations of Sturmian and eventually periodic sequences? 
 \end{question}

 The complexity function $\Unr_{\infw{x}}$ defined above may be of interest in its own right, as is the case with the ``reflection-free'' 
 complexity function enumerating factors such that the reversal of every sufficiently large factor is not a factor. 

\begin{question}
How can Theorem~\ref{thm:theoremrTM} be generalized 
with the use of standard generalizations of the 
Thue--Morse sequence? 
 \end{question}

 For example, if we let 
 $$ \infw{t3} = (t3(n))_{n\ge 0} = 011212201220200112202001200\cdots $$
 denote the generalized Thue--Morse sequence for which the $n$th term $t3(n)$ is equal to the number of $1$'s, modulo 3, in the base-$2$ 
 expansion of $n$, it can be shown that 
 $ r_{\infw{t3}}(n) = \rho_{\infw{t3}}(n) $
 for all $n \geq 3$, 
 and it appears that a similar property
 holds for the cases given by taking the number of $1$'s modulo $\ell > 4$. 

\begin{question}
 What is the reflection complexity of the Thue--Morse sequence 
 over polynomial extractions, with regard to the work of Moshe~\cite{Moshe2007}?
\end{question}

\begin{question}
 How can the upper bound in Theorem~\ref{thm:upper-bound-abcd}
 be improved? If $r_{\infw{x}}(n)$ is of the 
 form $\Omega(n)$, then how can this be improved?
\end{question}

\begin{question}
 How does the reflection complexity compare with other complexity functions, e.g., the complexity functions 
 listed in Section~\ref{sec:intro}?
\end{question}

 This leads us to ask about the respective growths of the complexity functions listed in Section \ref{sec:intro}, in particular 
 for morphic sequences. In this direction, recall that the factor complexity of a pure morphic sequence is either $\Theta(1)$, $\Theta(n)$, $\Theta(n \log \log n)$, $\Theta(n \log n)$ or $\Theta(n^2)$, see \cite{Pansiot1984} 
(more details can be found, e.g., in~\cite{Cisternino2018}). 
The factor complexity of a morphic sequence is either
$\Theta(n^{1+1/k})$ for some positive integer $k$, or else
it is $O(n \log n)$, see \cite{Devyatov2018,Deviatov2008}). 
As an illustration with a result that has not 
been already cited above, a comparison between growths for 
the factor complexity and the Lempel-Ziv complexity can be 
found in \cite{ConstantinescuIlie2007}. We end with an easy 
result for the growth of the reflection complexity in the 
case of pure morphic and morphic sequences. 

\begin{proposition}

\ { }

\begin{itemize}
\item[$*$]
The reflection complexity of a pure morphic sequence 
is either $\Theta(1)$, $\Theta(n)$, 
$\Theta(n \log \log n)$, $\Theta(n \log n)$ or 
$\Theta(n^2)$.
\item[$*$]
The reflection complexity of a morphic sequence is either
$\Theta(n^{1+1/k})$ for some positive integer $k$, or else
it is $O(n \log n)$.
\end{itemize}
\end{proposition}

\begin{proof} 
Use the inequalities in Theorem~\ref{thm:r-and-rho}:
for a sequence $\infw{x}$ and for all $n\ge 0$, we have $\frac{1}{2}\rho_{\infw{x}}(n) 
\leq r_{\infw{x}}(n)
\leq \rho_{\infw{x}}(n)$.
\end{proof}

\subsection*{Acknowledgments}
 We thank Boris Adamczewski for interesting discussions. John Campbell is grateful to acknowledge support from a Killam Postdoctoral 
 Fellowship from the Killam Trusts, and thanks Karl Dilcher for a useful discussion. The research of Jeffrey Shallit is supported by NSERC 
 grant 2024-03725. Manon Stipulanti is an FNRS Research Associate supported by the Research grant 1.C.104.24F. The authors are 
 thankful for a number of detailed reviewer reports that have substantially improved this paper.

 \

Jean-Paul Allouche

CNRS, IMJ-PRG 

 Sorbonne, 4 Place Jussieu

 75252 Paris Cedex 05, France

{\tt jean-paul.allouche@imj-prg.fr}

 \ 

 John M.\ Campbell 

 Department of Mathematics and Statistics, Dalhousie University

6299 South St., Halifax, NS B3H 4R2, Canada 

{\tt jmaxwellcampbell@gmail.com}

 \ 

 Shuo Li

 Department of Mathematics and Statistics, University of Winnipeg
 
515 Portage Avenue, Winnipeg, MB R3B 2E9, Canada

{\tt shuo.li.ismin@gmail.com}

 \ 

Jeffrey Shallit 

School of Computer Science, University of Waterloo

Waterloo, ON N2L 3G1, Canada

{\tt shallit@uwaterloo.ca}

 \ 

Manon Stipulanti

 Department of Mathematics, University of Li\`{e}ge

 4000 Li\`{e}ge, All\'{e}e de la D\'{e}couverte 12, Belgium 

{\tt m.stipulanti@uliege.be}

\end{document}